\newtheorem{proposition}{Proposition}[section]
\newtheorem{lemma}[proposition]{Lemma}
\newtheorem{corollary}[proposition]{Corollary}
\newtheorem{theorem}[proposition]{Theorem}
\newtheorem{remark}[proposition]{Remark}
\newtheorem{definition}[proposition]{Definition}
\newtheorem{example}[proposition]{Example}
\newcommand{\thlabel}[1]{\label{th:#1}}
\newcommand{\selabel}[1]{\label{se:#1}}
\newcommand{\seref}[1]{Section~\ref{se:#1}}
\newcommand{\lelabel}[1]{\label{le:#1}}
\newcommand{\leref}[1]{Lemma~\ref{le:#1}}
\newcommand{\prlabel}[1]{\label{pr:#1}}
\newcommand{\prref}[1]{Proposition~\ref{pr:#1}}
\newcommand{\colabel}[1]{\label{co:#1}}
\newcommand{\coref}[1]{Corollary~\ref{co:#1}}
\newcommand{\relabel}[1]{\label{re:#1}}
\newcommand{\reref}[1]{Remark~\ref{re:#1}}
\newcommand{\exlabel}[1]{\label{ex:#1}}
\newcommand{\delabel}[1]{\label{de:#1}}
\newcommand{\eqnlabel}[1]{\label{eqn:#1}}
\newcommand{\id}{\mathop{\mathrm{id}}\nolimits}
\newcommand{\tr}{\mathop{\mathrm{tr}}\nolimits}
\newcommand{\sgn}{\mathop{\mathrm{sgn}}\nolimits}
\newcommand{\ch}{\mathop{\mathrm{ch}}\nolimits}
\newcommand{\spn}{\mathop{\mathrm{span}}\nolimits}
\newcommand{\Salg}{\mathop{\mathrm{Salg}}\nolimits}
\newcommand{\SA}{\mathop{\mathrm{SA}}\nolimits}
\newcommand{\Alg}{\mathop{\mathrm{Alg}}\nolimits}
\newcommand{\GL}{\mathop{\mathrm{GL}}\nolimits}
\newcommand{\Stab}{\mathop{\mathrm{Stab}}\nolimits}
\newcommand{\Aut}{\mathop{\mathrm{Aut}}\nolimits}
\newcommand{\qmra}{\stackrel{?} \rightarrow}
\def\A{\mathbb{A}}
\def\a{\alpha}
\def\b{\beta}
\def\g{\gamma}
\def\d{\delta}
\def\i{\iota}
\def\l{\lambda}
\def\L{\Lambda}
\def\Om{\Omega}
\def\s{\sigma}
\def\p{\phi}
\def\bs{\backslash}
\def\ot{\otimes}
\begin{document}
\title{Geometric Classification of 4-Dimensional Superalgebras}
\author{Aaron Armour}
\address{School of Mathematics, Statistics and Computer Science,
Victoria  University of Wellington, PO Box 600,
Wellington, New Zealand}
\email{aaron.armour@mcs.vuw.ac.nz}
\author{Yinhuo Zhang}
\address{Department of Mathematics and Statistics,
University of Hasselt, B-3590 Diepenbeek, Belgium}
\email{yinhuo.zhang@uhasselt.be}
\subjclass{16W55,16W30, 16W50}
\keywords{Superalgebra, degeneration, irreducible component}

\maketitle

\section{Introduction}
The algebraic and geometric classification of finite dimensional algebras over an algebraic closed field $k$ was initiated  by Gabriel in \cite{Gab}, and has been being one of the interesting topics in the study of geometric methods in representation theory of algebras for the last three decades.  In \cite{Gab}, Gabriel gave a complete list of nonisomorphic 4-dimensional algebras over an algebraic closed field $k$ with characteristic not equal to $2$. The number of irreducible components of the variety Alg$_4$ is 5. The
classification of 5-dimensional $k$-algebras was done by Mazzola
in \cite{Maz}. The number of irreducible components of the variety Alg$_5$ was showed to be 10. Further studies on the classification of low
dimensional (rigid) algebras can be found in \cite{Flan, GM, Mak,Mak2, Rie}.
With the dimension $n$ increasing, both algebraic and geometric classifications of $n$-dimensional $k$-algebras become more and more difficult.
However, A lower and a upper bound for the number of irreducible
components of Alg$_n$ can be given (see \cite{Maz3}). Now let $V_n$ be an $n$-dimensional vector
space over $k$ with a basis $\{e_1,e_2, \cdots, e_n\}$. An
algebra structure on $V_n$ is determined by a set of
structure constants $c^h_{ij}$, where $e_i\cdot e_j=\sum_h
c^h_{ij}e_h$. Requiring the algebra structure to be associative and
unitary gives rise to a subvariety Alg$_n$ of
$k^{n^3}$. Base changes in $V_n$ result in the natural transport
of structure action on Alg$_n$, namely the action of GL$_n(k)$ on
Alg$_n$. Thus isomorphism classes of $n$-dimensional algebras are
in one-to-one correspondence with the orbits of the action of
$\GL_n(k)$ on Alg$_n$. The decomposition of Alg$_n$ into its
irreducible components under the Zariski topology is called the
geometric classification of $n$-dimensional algebras.

Our main interest is to give a geometric classification of
4-dimensional superalgebras, i.e. $\mathbb{Z}_2$-graded algebras. We notice that a $\mathbb{Z}_2$-graded  algebra is the same as a pair $(A, \sigma)$ consisting of an algebra $A$ and an algebra  involution $\sigma$. This enables us to define the variety $\Salg_n$ --- the variety of $n$-dimensional superalgebras --- as a subvariety of $k^{n^3+n^2}$. One of the significant differences between the variety Alg$_n$ and the variety Salg$_n$ is that Salg$_n$ is disconnected while Alg$_n$ is connected. Under certain assumptions on $n$ and $\ch(k)$ it can be shown that $\Salg_n$ is the disjoint union of $n$ connected
subvarieties, for example, when $n\leq 6$ or $ch(k)=0$.

The paper is organized as follows.  In Section 2, we define the variety Salg$_n$ of $n$-dimensional superalgebras, a closed subvariety of SA$_n$ of $n$-dimensional algebras. Salg$_n$ is a disjoint union of the subsets Salg$_n^i$, $i=1,2\cdots, n$. When $n\leq 6$ or $ch(k)=0$, they are closed in Salg$_n$ and form connected components of Salg$_n$.

In Section 3, we compute the dimensions of the orbits in Salg$_4$, which will help us to determine the degenerations of the superalgebras. So we need to recall the algebraic group $G_n$ and the transport of its structure action on Salg$_n$. Since $G_n$ is connected, every irreducible component of Salg$_n$ is the closure of either a single orbit or an infinite family of orbits.
In Section 4, we will use the ring properties of superalgebras to determine some closed sets of Salg$_n$. For instance, the set of superalgebras $A$ with $A_1^2=0$ is a closed subset. Similarly the set of superalgebras with $A_0$ being commutative is also a closed subset. The closed subsets can help us to determine some superalgebras that can not degenerate to other superalgebras.

In the last section, we give the degeneration diagrams of Salg$^i_4$, where $i=2,3$. The degeneration diagram of Salg$^4_4=\mathrm{Alg}_4$ has been given by Gabriel, and Salg$^1_4$ has only one orbit. In total, we  have found 20 irreducible components of Salg$_4$. However, Salg$_4$ may posses up to 22 irreducible components.

To end the introduction, let us recall from \cite{ACZ} the algebraic classification of 4-dimensional superalgebras over an algebraic closed field $k$  as the geometric classification must be made on the basis of the algebraic classification. Throughout, $k$ is an algebraic closed field with $ch(k)\not=2$. All the algebras without other specified are over $k$.

\begin{theorem}\thlabel{2.1}\cite{Gab}
The following algebras are pairwise non-isomorphic except pairs within the family $(18;\lambda|0)$ where $(18;\lambda_1|0)\cong (18;\lambda_2|0)$ if and only if $\lambda_1=\lambda_2$ or $\lambda_1\lambda_2=1$.

$\begin{array}{llll}
(1|0) & k \times k \times k \times k, &
(2|0) & k \times k \times k[X]/(X^{2}),\\
(3|0) & k[X]/(X^{2}) \times k[Y]/(Y^{2}),&
(4|0) & k \times k[X]/(X^{3}),\\
(5|0) & k[X]/(X^{4}),&
(6|0) & k \times k[X,Y]/(X,Y)^{2},\\
(7|0) & k[X,Y]/(X^{2},Y^{2}),&
(8|0) & k[X,Y]/(X^{3},XY,Y^{2}),\\
(9|0) & k[X,Y,Z]/(X,Y,Z)^{2},&
(10|0) & M_2,\\
{(11|0)} &  {\scriptsize\left\{\left.\begin{pmatrix}
a & 0 & 0 & 0\\
0 & a & 0 & d\\
c & 0 & b & 0\\
0 & 0 & 0 & b
\end{pmatrix}\right| a,b,c,d \in k\right\}},  &
{(12|0)} & \wedge k^{2},  \\
{(13|0)} &  k \times {\scriptsize\begin{pmatrix}
k & k \\
0 & k
\end{pmatrix}=\left\{\left.(a, \begin{pmatrix}
b & c \\
0 & d
\end{pmatrix})\right| a,b,c,d \in k\right\}}, &
{(14|0)} &  {\scriptsize\left\{\left.\begin{pmatrix}
a & 0 & 0 \\
c & a & 0 \\
d & 0 & b
\end{pmatrix}\right| a,b,c,d \in k\right\}},  \\
{(15|0)} &  {\scriptsize\left\{\left.\begin{pmatrix}
a & c & d \\
0 & a & 0 \\
0 & 0 & b
\end{pmatrix}\right| a,b,c,d \in k\right\}},  &
{(16|0)} & k \langle X,Y \rangle /(X^{2},Y^{2},YX),  \\
{(17|0)} & {\scriptsize\left\{\left.\begin{pmatrix}
a & 0 & 0 \\
0 & a & 0 \\
 c & d & b
 \end{pmatrix}\right| a,b,c,d \in k\right\}}, && \\
{(18;\lambda|0)} & k \langle X,Y \rangle
/(X^{2},Y^{2},YX-\lambda XY), \ \ \lambda \neq -1,0,1, &&\\
(19|0) & k \langle X,Y \rangle / (Y^{2},X^{2}+YX,XY+YX).&&\\
\end{array}$

\end{theorem}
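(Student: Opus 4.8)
The plan is to reduce the classification to linear algebra via the Wedderburn--Malcev decomposition. Write $A = S \oplus J$ with $J = \mathrm{rad}(A)$ the nilpotent Jacobson radical and $S \cong A/J$ a semisimple subalgebra; since $k$ is algebraically closed, $S \cong \prod_i M_{n_i}(k)$ with $\sum_i n_i^2 = \dm S$, and since $1 \in S$ we have $\dm S \geq 1$. I would stratify by $\dm S \in \{1,2,3,4\}$, equivalently by $\dm J \in \{3,2,1,0\}$, and within each stratum describe $J$ as an $S$-bimodule together with its internal multiplication, all up to isomorphism (equivalently, up to the automorphisms of $S$ and the transport-of-structure action).

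The strata with $\dm J \le 2$ are comparatively routine bookkeeping. For $\dm J = 0$ the only options satisfy $\sum n_i^2 = 4$, giving $k^4$ (item $(1|0)$) and $M_2$ (item $(10|0)$). For $\dm J = 1$ we have $S \cong k^3$ and $J$ a one-dimensional square-zero bimodule; a Peirce decomposition $J = \bigoplus_{i,j} e_i J e_j$ shows $J = e_iJe_j$ for a single pair $(i,j)$, and, up to permuting the idempotents, the diagonal case $i=j$ yields $(2|0)$ while the off-diagonal case $i \ne j$ yields $(13|0)$. For $\dm J = 2$ we have $S \cong k\times k$; writing out the Peirce components $e_iJe_j$ (whose dimensions sum to $2$) together with the compatible nilpotent multiplication, and using the extra $\mathbb{Z}/2$-symmetry swapping the two idempotents, produces exactly the items $(3|0),(4|0),(6|0),(11|0),(14|0),(15|0),(17|0)$.

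The substantive case is $\dm J = 3$, i.e. the local algebras $S \cong k$, $\mathfrak{m} := J$. Here I would stratify by the minimal number of generators $\dm \mathfrak{m}/\mathfrak{m}^2$. If this is $1$ then $A \cong k[X]/(X^4)$ (item $(5|0)$); if it is $3$ then $\mathfrak{m}^2 = 0$ and $A \cong k[X,Y,Z]/(X,Y,Z)^2$ (item $(9|0)$). The interesting value is $2$: then $\dm \mathfrak{m}^2 = 1$ and Nakayama forces $\mathfrak{m}^3 = 0$, so associativity is automatic and the algebra is completely encoded by the multiplication map $V \otimes V \to \mathfrak{m}^2 \cong k$ on $V := \mathfrak{m}/\mathfrak{m}^2$, that is, by a nonzero bilinear form $B$ on the $2$-dimensional space $V$, considered up to $\GL(V)$-congruence $B \mapsto g^{T}Bg$ and rescaling of $\mathfrak{m}^2$.

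Classifying these forms is the heart of the proof, and the place where $\ch(k)\ne 2$ is indispensable. I would split $B = B_s + B_a$ into symmetric and antisymmetric parts. The two rank-one orbits give $(8|0)$ (symmetric) and $(16|0)$ (non-symmetric). For nondegenerate $B$ the complete congruence-and-scaling invariant is the conjugacy class of the cosquare $C = B^{-1}B^{T}$: it is a congruence invariant because $(g^{T}Bg)^{-1}(g^{T}Bg)^{T} = g^{-1}Cg$, and it is unchanged by scaling, while the relation $C^{T} \sim C^{-1}$ forces the eigenvalues to occur as $\{\lambda,\lambda^{-1}\}$. A diagonalizable $C$ with $\lambda \ne \pm 1$ gives precisely the family $(18;\lambda|0)$, and the swap $X\leftrightarrow Y$ of generators realizes $\lambda \leftrightarrow \lambda^{-1}$, which is exactly the stated criterion $\lambda_1=\lambda_2$ or $\lambda_1\lambda_2=1$. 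The scalar cases $C=I$ and $C=-I$ give $(7|0)$ and $(12|0)$, and a short computation shows that over $\ch(k)\ne 2$ the only realizable nonscalar (Jordan) cosquare is at eigenvalue $-1$ (eigenvalue $+1$ forces $B$ to degenerate), producing $(19|0)$. Finally, pairwise non-isomorphism across strata follows from the invariants $\dm \mathrm{rad}(A)$, $\dm \mathrm{rad}(A)/\mathrm{rad}(A)^2$, commutativity, and the isomorphism type of $A/\mathrm{rad}(A)$, while distinctness within $(18;\lambda|0)$ is detected by the cosquare eigenvalues $\{\lambda,\lambda^{-1}\}$. I expect the main obstacle to be exactly this nondegenerate local case: verifying that the cosquare is a \emph{complete} invariant and correctly handling the boundary $\lambda=\pm1$, where the contrast between diagonalizable and Jordan behaviour at $-1$ (and its absence at $+1$) is forced by $\ch(k)\ne 2$.
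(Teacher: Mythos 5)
The paper does not actually prove this statement: Theorem~1.1 is imported verbatim from Gabriel's paper and cited as \cite{Gab}, so there is no internal argument to compare yours against. Judged on its own merits, your outline is correct and would yield a complete proof. The Wedderburn--Malcev stratification by $\dim\mathrm{rad}(A)$, the Peirce bookkeeping for $\dim\mathrm{rad}(A)\le 2$ (which does produce exactly $(1|0),(10|0)$; $(2|0),(13|0)$; and the seven algebras $(3|0),(4|0),(6|0),(11|0),(14|0),(15|0),(17|0)$ once one checks that the radical multiplication is forced to vanish in every mixed Peirce configuration), and the reduction of the local case with $\dim\mathfrak{m}/\mathfrak{m}^2=2$ to a nonzero bilinear form on $k^2$ modulo congruence and scaling, all go through; in dimension $2$ the completeness of the cosquare as a congruence invariant for nondegenerate forms is a short direct verification (each admissible cosquare $\mathrm{diag}(\l,\l^{-1})$, $\pm I$, $J_2(-1)$ is realized by a single homothety class of forms), and since scaling fixes the cosquare it also separates the congruence-and-scaling classes. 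One parenthetical is imprecisely justified: a nontrivial Jordan cosquare at eigenvalue $+1$ does not force $B$ to degenerate; rather, it would make $B^{T}-B=B(C-I)$ a nonzero \emph{singular} antisymmetric $2\times 2$ matrix, which is impossible when $\ch(k)\neq 2$ --- the conclusion (no nonscalar unipotent cosquare, hence $(19|0)$ is the unique non-diagonalizable case) is correct, only the stated reason needs this one-line repair. For context, Gabriel's own route organizes the basic algebras by their quivers with relations rather than by bilinear forms, but the two approaches are essentially equivalent repackagings of the same radical-filtration data; yours has the advantage of making the $\l\leftrightarrow\l^{-1}$ identification in the family $(18;\l|0)$ and the special role of $\ch(k)\neq 2$ completely transparent.
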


\begin{theorem}\cite[Thm 3.1]{ACZ} Suppose $A$ is a  superalgebra with $\dim A_0 =3$ and $\dim A_1 = 1$. Then $A$ is isomorphic to one of the  superalgebras in the following pairwise non-isomorphic  families:

$\begin{array}{ll}
 (1|1)  &  k \times k \times k \times k: \\
&  (1|1)_0=k(1,1,1,1) \oplus k(1,0,0,0) \oplus k(0,0,1,1),
 (1|1)_1=k(0,0,1,-1) ,\\
 (2|1)  &  k \times k \times k[X]/(X^{2}): \\
&  (2|1)_0=k(1,1,1)\oplus k(1,0,0)\oplus k(0,1,0),
  (2|1)_1=k(0,0,X) ,\\
  (2|2)  &  (2|2)_0=k(1,1,1)\oplus k(1,1,0)\oplus k(0,0,X),
  (2|2)_1=k(1,-1,0) ,\\
 (3|1) &   k[X]/(X^{2}) \times k[Y]/(Y^{2}): \\
 &  (3|1)_0=k(1,1)\oplus k(1,0)\oplus k(X,0),\
    (3|1)_1=k(0,Y) ,\\
 (4|1)  &   k \times k[X]/(X^{3}): \\
 &  (4|1)_0=k(1,1)\oplus k(1,0)\oplus k(0,X^{2}),\
  (4|1)_1=k(0,X) ,\\
 (6|1) &   k \times k[X,Y]/(X,Y)^{2}: \\
 &  (6|1)_0=k(1,1)\oplus k(1,0)\oplus k(0,X),\
 (6|1)_1=k(0,Y) ,\\
 (7|1)  &   k[X,Y]/(X^2,Y^2): \\
 &  (7|1)_0=k1\oplus k(X+Y) \oplus kXY,\
 (7|1)_1=k(X-Y) ,\\
 (8|1)  &    k[X,Y]/(X^{3},XY,Y^{2}): \\
 &  (8|1)_0=k1\oplus kX\oplus kX^{2}, \
 (8|1)_1=kY ,\\
 (8|2) &  (8|2)_0=k1\oplus kX^{2}\oplus kY,\
 (8|2)_1=kX ,\\
 (9|1) &    k[X,Y,Z]/(X,Y,Z)^{2}: \\
 &  (9|1)_0=k1\oplus kX\oplus kY, \  (9|1)_1=kZ ,\\
  (11|1) &   {\scriptsize\left\{\left.\begin{pmatrix}
a & 0 & 0 & 0\\
0 & a & 0 & d\\
c & 0 & b & 0\\
0 & 0 & 0 & b
\end{pmatrix}\right| a,b,c,d \in k\right\}:} \\
 & \  (11|1)_0=k{\scriptsize\begin{pmatrix}
1 & 0 & 0 & 0\\
0 & 1 & 0 & 0\\
0 & 0 & 1 & 0\\
0 & 0 & 0 & 1\\
\end{pmatrix}}
\oplus k{\scriptsize\begin{pmatrix}
1 & 0 & 0 & 0\\
0 & 1 & 0 & 0\\
0 & 0 & 0 & 0\\
0 & 0 & 0 & 0\\
\end{pmatrix}}
\oplus k{\scriptsize\begin{pmatrix}
0 & 0 & 0 & 0\\
0 & 0 & 0 & 1\\
0 & 0 & 0 & 0\\
0 & 0 & 0 & 0\\
\end{pmatrix}} \\
 & \   (11|1)_1=k{\scriptsize\begin{pmatrix}
0 & 0 & 0 & 0\\
0 & 0 & 0 & 0\\
1 & 0 & 0 & 0\\
0 & 0 & 0 & 0\\
\end{pmatrix}} ,\\
 (13|1)  &   {\scriptsize k\times
\begin{pmatrix}
k & k \\
0 & k
\end{pmatrix}}=
{\scriptsize\left\{\left(a, \begin{pmatrix}
b & c\\
0 & d\\
\end{pmatrix})\right| a,b,c,d \in k\right\}}: \\
 & \  (13|1)_0=k{\scriptsize\left(1, \begin{pmatrix}
1 & 0 \\
0 & 1 \\
\end{pmatrix}\right)}
\oplus k{\scriptsize\left(0, \begin{pmatrix}
1 & 0 \\
0 & 0 \\
\end{pmatrix}\right)}
\oplus k{\scriptsize\left(0, \begin{pmatrix}
0 & 0 \\
0 & 1 \\
\end{pmatrix}\right)} \\
 & \   (13|1)_1= k{\scriptsize
\left(0, \begin{pmatrix}
 0 & 1 \\
 0 & 0 \\
 \end{pmatrix}\right)} ,\\
 (14|1)&   {\scriptsize\left\{\left.\begin{pmatrix}
a & 0 & 0 \\
c & a & 0 \\
d & 0 & b \\
\end{pmatrix}\right| a,b,c,d \in k\right\}}: \\
 & \  (14|1)_0=k{\scriptsize\begin{pmatrix}
1 & 0 & 0\\
0 & 1 & 0\\
0 & 0 & 1\\
\end{pmatrix}}
\oplus k{\scriptsize\begin{pmatrix}
1 & 0 & 0 \\
0 & 1 & 0 \\
0 & 0 & 0 \\
\end{pmatrix}}
\oplus k{\scriptsize\begin{pmatrix}
0 & 0 & 0 \\
0 & 0 & 0 \\
1 & 0 & 0 \\
\end{pmatrix}}
\end{array}$

$\begin{array}{ll}
 & \   (14|1)_1=k{\scriptsize\begin{pmatrix}
0 & 0 & 0 \\
1 & 0 & 0 \\
0 & 0 & 0 \\
\end{pmatrix}} ,\\
(14|2) & \  (14|2)_0=k{\scriptsize\begin{pmatrix}
1 & 0 & 0 \\
0 & 1 & 0 \\
0 & 0 & 1 \\
\end{pmatrix}}
\oplus k{\scriptsize\begin{pmatrix}
1 & 0 & 0 \\
0 & 1 & 0 \\
0 & 0 & 0 \\
\end{pmatrix}}
\oplus k{\scriptsize\begin{pmatrix}
0 & 0 & 0 \\
1 & 0 & 0 \\
0 & 0 & 0 \\
\end{pmatrix}}  \\
 & \   (14|2)_1=k{\scriptsize\begin{pmatrix}
0 & 0 & 0 \\
0 & 0 & 0 \\
1 & 0 & 0 \\
\end{pmatrix}} ,\\
\end{array}$

$\begin{array}{ll}
 (15|1) &  {\scriptsize\left\{\left.\begin{pmatrix}
a & c & d \\
0 & a & 0 \\
0 & 0 & b \\
\end{pmatrix}\right| a,b,c,d \in k\right\}}: \\
 &  (15|1)_0=k{\scriptsize\begin{pmatrix}
1 & 0 & 0 \\
0 & 1 & 0 \\
0 & 0 & 1 \\
\end{pmatrix}}
\oplus k{\scriptsize\begin{pmatrix}
1 & 0 & 0 \\
0 & 1 & 0 \\
0 & 0 & 0 \\
\end{pmatrix}}
\oplus k{\scriptsize\begin{pmatrix}
0 & 0 & 1 \\
0 & 0 & 0 \\
0 & 0 & 0 \\
\end{pmatrix}}  \\
 & \   (15|1)_1=k{\scriptsize\begin{pmatrix}
0 & 1 & 0 \\
0 & 0 & 0 \\
0 & 0 & 0 \\
\end{pmatrix}} ,\\
(15|2) &  (15|2)_0=k{\scriptsize\begin{pmatrix}
1 & 0 & 0 \\
0 & 1 & 0 \\
0 & 0 & 1 \\
\end{pmatrix}}
\oplus k{\scriptsize\begin{pmatrix}
1 & 0 & 0 \\
0 & 1 & 0 \\
0 & 0 & 0 \\
\end{pmatrix}}
\oplus k{\scriptsize\begin{pmatrix}
0 & 1 & 0 \\
0 & 0 & 0 \\
0 & 0 & 0 \\
\end{pmatrix}}  \\
 &   (15|2)_1=k{\scriptsize\begin{pmatrix}
0 & 0 & 1 \\
0 & 0 & 0 \\
0 & 0 & 0 \\
\end{pmatrix}} ,\\
 (17|1)&   {\scriptsize\left\{\left.\begin{pmatrix}
a & 0 & 0 \\
0 & a & 0 \\
c & d & b \\
\end{pmatrix}\right| a,b,c,d \in k\right\}}: \\
 &   (17|1)_0=k{\scriptsize\begin{pmatrix}
1 & 0 & 0 \\
0 & 1 & 0 \\
0 & 0 & 1 \\
\end{pmatrix}}
\oplus k{\scriptsize\begin{pmatrix}
1 & 0 & 0 \\
0 & 1 & 0 \\
0 & 0 & 0 \\
\end{pmatrix}}
\oplus k{\scriptsize\begin{pmatrix}
0 & 0 & 0 \\
0 & 0 & 0 \\
1 & 0 & 0 \\
\end{pmatrix}}  \\
 &    (17|1)_1=k{\scriptsize\begin{pmatrix}
0 & 0 & 0 \\
0 & 0 & 0 \\
0 & 1 & 0 \\
\end{pmatrix}}.
\end{array}$
\end{theorem}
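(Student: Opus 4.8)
The plan is to use the dictionary recalled in the introduction: since $\ch(k)\neq2$, a superalgebra structure on an algebra $A$ is the same thing as an algebra involution $\sigma$, with $A_0$ and $A_1$ the $(+1)$- and $(-1)$-eigenspaces of $\sigma$. Under this correspondence the hypothesis $\dim A_0=3$, $\dim A_1=1$ says precisely that $\sigma\in\Aut(A)$ satisfies $\sigma^2=\id$ and has a one-dimensional $(-1)$-eigenspace. Moreover two such structures on the same $A$ yield isomorphic superalgebras exactly when the involutions are conjugate in $\Aut(A)$, while superalgebras with non-isomorphic underlying algebras can never be isomorphic (the underlying algebra is recovered by forgetting the grading). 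Hence the classification reduces to running through the algebras of \thref{2.1} and, for each, describing the $\Aut(A)$-conjugacy classes of involutions $\sigma$ with $\dim\Ker(\sigma+\id)=1$.

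Concretely, writing $A_1=kx$, a grading of this type is the data of a line $kx$ together with a complementary $3$-dimensional subalgebra $A_0\ni 1$ satisfying $A_0x+xA_0\subseteq kx$ and $x^2\in A_0$; equivalently $x$ spans a common eigenline for left and right multiplication by $A_0$. First I would record, for each $A$, its Jacobson radical and its centre: since $\sigma$ fixes $1$ and preserves both, the radical and the centre are $\mathbb{Z}_2$-graded, which forces the candidate odd line $kx$ into a small, explicit subspace. I would then impose the eigenline conditions directly on the structure constants of $A$, produce the finitely many admissible lines $kx$, and finally divide by the action of $\Aut(A)$ to read off the conjugacy classes. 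This yields, algebra by algebra, the gradings displayed in the statement.

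To certify that the listed families are pairwise non-isomorphic I would exhibit invariants preserved by superalgebra isomorphisms: besides the isomorphism type of the underlying algebra, the isomorphism type of the even subalgebra $A_0$ (in particular whether $A_0$ is commutative) and the square $x^2$ (in particular whether $A_1^2=0$). These already separate the pairs sharing one underlying algebra: $(2|1)$ versus $(2|2)$ and $(8|1)$ versus $(8|2)$ are told apart by whether $x^2=0$, while $(14|1)$ versus $(14|2)$ and $(15|1)$ versus $(15|2)$ are told apart by the commutativity of $A_0$. For the algebras absent from the list, namely $5,10,12,16,18,19$, I would prove that no admissible involution exists: for $M_2$ (algebra $10$) every nontrivial involution is conjugate to conjugation by $\mathrm{diag}(1,-1)$, whose $(-1)$-eigenspace is $2$-dimensional, while for the local algebras $5,12,16,18,19$ the graded shape of the radical makes the requirement $A_0x+xA_0\subseteq kx$ unsatisfiable by any line $kx$ spanning a one-dimensional odd part.

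The hard part will be the bookkeeping in the noncommutative cases, especially the matrix algebras $11,13,14,15,17$ and $M_2$. There one must pin down $\Aut(A)$ precisely, impose the left- and right-multiplication eigenline conditions as independent constraints, and then check that the surviving lines $kx$ are genuinely inequivalent rather than merely different in appearance; the automorphism group can fuse candidate gradings that a naive count would keep separate, so the conjugacy analysis, and not the existence of the gradings, is where the real care is needed.
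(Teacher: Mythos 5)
This theorem is quoted from \cite[Thm 3.1]{ACZ} and the present paper contains no proof of it, so there is no internal argument to compare against; your outline is the standard (and surely the intended) route: identify a $\mathbb{Z}_2$-grading with an algebra involution, reduce to $\Aut(A)$-conjugacy classes of involutions with one-dimensional $(-1)$-eigenspace for each underlying algebra on Gabriel's list, and separate the resulting families by invariants of $A_0$ and $A_1$. The specific assertions you make all check out --- the algebras $(5),(10),(12),(16),(18;\lambda),(19)$ indeed admit no grading with $\dim A_1=1$ (for $M_2$ by Skolem--Noether, for the local algebras because the conditions $A_0x+xA_0\subseteq kx$ and $x^2\in A_0$ force a contradiction with $\dim(A_0\cap\mathrm{rad}\,A)=2$), and the pairs $(2|1)/(2|2)$, $(8|1)/(8|2)$ are separated by whether $A_1^2=0$ while $(14|1)/(14|2)$, $(15|1)/(15|2)$ are separated by commutativity of $A_0$ --- though as written this is a correct plan rather than a completed case-by-case verification.
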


\begin{theorem}\thlabel{4.1} \cite[Thm 4.1]{ACZ}  Suppose $A$ is a superalgebra with  $\dim A_0=\dim A_1=2$. Then $A$ is isomorphic to one of the superalgebras in the following pairwise non-isomorphic families:

$\begin{array}{ll}
  (1|2)   &   k \times k \times k \times k:  \\
&   (1|2)_0=k(1,1,1,1)\oplus k(1,1,0,0)
  \  \mathrm{and}\    (1|2)_1=k(1,-1,0,0)\oplus k(0,0,1,-1),  \\
  (2|3)   &   k \times k \times k[X]/(X^{2}):  \\
 &    (2|3)_0=k(1,1,1)\oplus k(1,1,0)
  \  \mathrm{and}\    (2|3)_1=k(1,-1,0)\oplus k(0,0,X), \\
  (3|2)   &   k[X]/(X^{2}) \times k[Y]/(Y^{2}):  \\
 &    (3|2)_0=k(1,1)\oplus k(1,0)     \  \mathrm{and}\
  (3|2)_1=k(X,0)\oplus k(0,Y),  \\
  (3|3)    &    (3|3)_0=k(1,1)\oplus k(X,Y)     \  \mathrm{and}\
  (3|3)_1=k(1,-1)\oplus k(X,-Y),  \\
  (5|1)   &   k[X]/(X^{4}):  \\
 &    (5|1)_0=k1\oplus kX^{2}     \  \mathrm{and}\
  (5|1)_1=kX\oplus kX^{3},  \\
  (6|2)   &   k \times k[X,Y]/(X,Y)^{2}:  \\
 &    (6|2)_0=k(1,1)\oplus k(1,0)     \  \mathrm{and}\
  (6|2)_1=k(0,X)\oplus k(0,Y),  \\
  (7|2)   &   k[X,Y]/(X^{2},Y^{2}):  \\
 &    (7|2)_0=k1\oplus kX     \  \mathrm{and}\    (7|2)_1=kY\oplus kXY,  \\
(7|3) & (7|3)_0=k1+kXY\ \mathrm{and}\ (7|3)_1=kX+kY.\\
{(8|3)} & k[X,Y]/(X^{3},XY,Y^{2}):\\
& (8|3)_0=k1+kX^{2}\ \mathrm{and}\ (8|3)_1=kX+kY.\\
  (9|2)   &   k[X,Y,Z]/(X,Y,Z)^{2},  \\
 &    (9|2)_0=k1\oplus kX     \  \mathrm{and}\    (9|2)_1=kY\oplus kZ,  \\
  (10|1)   &   M_2:  \\
 &    (10|1)_0=k{\scriptsize\begin{pmatrix}
1 & 0\\
0 & 1
\end{pmatrix}}\oplus k{\scriptsize\begin{pmatrix}
1 & 0\\
0 & 0
\end{pmatrix}}     \  \mathrm{and}\    (10|1)_1=k{\scriptsize\begin{pmatrix}
0 & 1\\
0 & 0
\end{pmatrix}}\oplus k{\scriptsize\begin{pmatrix}
0 & 0\\
1 & 0
\end{pmatrix}},  \\
(11|2)   &    {\scriptsize\left\{\left.\begin{pmatrix}
a & 0 & 0 & 0\\
0 & a & 0 & d\\
c & 0 & b & 0\\
0 & 0 & 0 & b
\end{pmatrix}\right| a,b,c,d \in k\right\}}:  \\
 &    (11|2)_0=k{\scriptsize\begin{pmatrix}
1 & 0 & 0 & 0\\
0 & 1 & 0 & 0\\
0 & 0 & 1 & 0\\
0 & 0 & 0 & 1
\end{pmatrix}}\oplus k{\scriptsize\begin{pmatrix}
1 & 0 & 0 & 0\\
0 & 1 & 0 & 0\\
0 & 0 & 0 & 0\\
0 & 0 & 0 & 0
\end{pmatrix}}   \  \mathrm{and} \\
 &     (11|2)_1=k{\scriptsize\begin{pmatrix}
0 & 0 & 0 & 0\\
0 & 0 & 0 & 1\\
0 & 0 & 0 & 0\\
0 & 0 & 0 & 0
\end{pmatrix}}\oplus k{\scriptsize\begin{pmatrix}
0 & 0 & 0 & 0\\
0 & 0 & 0 & 0\\
1 & 0 & 0 & 0\\
0 & 0 & 0 & 0
\end{pmatrix}},  \\
\end{array}$

$\begin{array}{ll}
  (11|3)    &    (11|3)_0=k{\scriptsize\begin{pmatrix}
1 & 0 & 0 & 0\\
0 & 1 & 0 & 0\\
0 & 0 & 1 & 0\\
0 & 0 & 0 & 1
\end{pmatrix}}\oplus k{\scriptsize\begin{pmatrix}
0 & 0 & 0 & 0\\
0 & 0 & 0 & 1\\
1 & 0 & 0 & 0\\
0 & 0 & 0 & 0
\end{pmatrix}}   \  \mathrm{and} \\
 &    \    (11|3)_1=k{\scriptsize\begin{pmatrix}
1 & 0 & 0 & 0\\
0 & 1 & 0 & 0\\
0 & 0 & -1 & 0\\
0 & 0 & 0 & -1
\end{pmatrix}}\oplus k{\scriptsize\begin{pmatrix}
0 & 0 & 0 & 0\\
0 & 0 & 0 & -1\\
1 & 0 & 0 & 0\\
0 & 0 & 0 & 0
\end{pmatrix}},  \\
  (12|1)   &    \wedge k^{2}\cong
k\langle X, Y\rangle/(X^2, Y^2, XY+YX):  \\
 &    (12|1)_0=k1\oplus kX     \  \mathrm{and}\
(12|1)_1=kY\oplus kXY,  \\
(12|2) & (12|2)_0=k1+kXY\ \mathrm{and}\ (12|2)_1=kX+kY.\\
(14|3)   &    {\scriptsize\left\{\left.\begin{pmatrix}
a & 0 & 0 \\
c & a & 0 \\
d & 0 & b
\end{pmatrix}\right| a,b,c,d \in k\right\}}:  \\
 &    (14|3)_0=k{\scriptsize\begin{pmatrix}
1 & 0 & 0 \\
0 & 1 & 0 \\
0 & 0 & 1
\end{pmatrix}}\oplus k{\scriptsize\begin{pmatrix}
1 & 0 & 0 \\
0 & 1 & 0 \\
0 & 0 & 0
\end{pmatrix}}   \  \mathrm{and} \\
 &       (14|3)_1=k{\scriptsize\begin{pmatrix}
0 & 0 & 0 \\
1 & 0 & 0 \\
0 & 0 & 0
\end{pmatrix}}\oplus k{\scriptsize\begin{pmatrix}
0 & 0 & 0 \\
0 & 0 & 0 \\
1 & 0 & 0
\end{pmatrix}},  \\
  (15|3)   &    {\scriptsize\left\{\left.\begin{pmatrix}
a & c & d \\
0 & a & 0 \\
0 & 0 & b
\end{pmatrix}\right| a,b,c,d \in k\right\}}:  \\
 &    (15|3)_0=k{\scriptsize\begin{pmatrix}
1 & 0 & 0 \\
0 & 1 & 0 \\
0 & 0 & 1
\end{pmatrix}}\oplus k{\scriptsize\begin{pmatrix}
1 & 0 & 0 \\
0 & 1 & 0 \\
0 & 0 & 0
\end{pmatrix}}   \  \mathrm{and} \\
 &        (15|3)_1=k{\scriptsize\begin{pmatrix}
0 & 1& 0 \\
0 & 0 & 0 \\
0 & 0 & 0
\end{pmatrix}}\oplus k{\scriptsize\begin{pmatrix}
0 & 0 & 1 \\
0 & 0 & 0 \\
0 & 0 & 0
\end{pmatrix}}  ,\\
\end{array}$

$\begin{array}{ll}
(16|1)   &    k \langle X,Y \rangle
/(X^{2},Y^{2},YX):  \\
 &    (16|1)_0=k1\oplus kX     \  \mathrm{and}\
  (16|1)_1=kY\oplus kXY,  \\
  (16|2)    &    (16|2)_0=k1\oplus kY     \  \mathrm{and}\
  (16|2)_1=kX\oplus kXY,  \\
 (16|3) & (16|3)_0=k1+kXY\ \mathrm{and} (16|3)_1=kX+kY.\\
 \end{array}$

$\begin{array}{ll}
  (17|2)   &    {\scriptsize\left\{\left.\begin{pmatrix}
a & 0 & 0 \\
0 & a & 0 \\
c & d & b
\end{pmatrix}\right| a,b,c,d \in k\right\}}:  \\
   &    (17|2)_0=k{\scriptsize\begin{pmatrix}
1 & 0 & 0 \\
0 & 1 & 0 \\
0 & 0 & 1
\end{pmatrix}}\oplus k{\scriptsize\begin{pmatrix}
1 & 0 & 0 \\
0 & 1 & 0 \\
0 & 0 & 0
\end{pmatrix}}  \  \mathrm{and}  \\
 &        (17|2)_1=k{\scriptsize\begin{pmatrix}
0 & 0 & 0 \\
0 & 0 & 0 \\
1 & 0 & 0
\end{pmatrix}}\oplus k{\scriptsize\begin{pmatrix}
0 & 0 & 0 \\
0 & 0 & 0 \\
0 & 1 & 0
\end{pmatrix}},  \\
\end{array}$

$\begin{array}{ll}
  (18;\lambda|1)  &   k\langle X,Y\rangle
/(X^{2},Y^{2},YX-\lambda XY)  ,   \  \mathrm{where}\    \lambda\in k\   with
  \lambda \neq -1,0,1\ :  \\
 &   (18;\lambda|1)_0=k1\oplus kX     \  \mathrm{and}\
  (18;\lambda|1)_1=kY\oplus kXY.  \\
 (18;\l|2) & (18;\l|2)_0=k1+kXY\ \mathrm{and}\ (18;\l|2)_1=kX+kY.\\
{(19|1)} & k\langle X,Y\rangle/(Y^{2},X^{2}+YX,YX+XY):\\
& (19|1)_0=k1+kXY\ \mathrm{and}\ (19|1)_1=kX+kY.
\end{array}$

Moreover, $(18;\l|2)\cong(18;\l'|2)$ if and only if $\l'=\l$ or
$\l\l'=1$.
\end{theorem}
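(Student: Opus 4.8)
The plan is to reduce the classification to a question about involutive automorphisms and then to run a case analysis over the $4$-dimensional algebras of \thref{2.1}. Since $\ch(k)\neq 2$, giving a $\mathbb{Z}_2$-grading $A=A_0\oplus A_1$ is the same as giving an involutive algebra automorphism $\s\in\Aut(A)$ with $\s^2=\id$, where $A_0$ and $A_1$ are its $(+1)$- and $(-1)$-eigenspaces; the grading axioms are exactly the multiplicativity of $\s$. Because $\s(1)=1$, the even part $A_0$ is a $2$-dimensional unital subalgebra, hence commutative, so $A_0\cong k\times k$ or $A_0\cong k[X]/(X^2)$. Two such structures $(A,\s)$ and $(A,\s')$ are isomorphic as superalgebras precisely when $\s'=\p\s\p^{-1}$ for some $\p\in\Aut(A)$; thus the superalgebras with a fixed underlying algebra $A$ correspond to the $\Aut(A)$-conjugacy classes of involutions whose $(-1)$-eigenspace is $2$-dimensional (equivalently $\tr\s=0$).

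The core of the argument is a case analysis. The underlying algebra of any such $A$ is $4$-dimensional, so by \thref{2.1} it is isomorphic to exactly one of $(1|0),\dots,(19|0)$. For each of these I would compute $\Aut(A)$ explicitly, list the involutions with $2$-dimensional odd part, and sort them into conjugacy classes. In the semisimple and commutative cases the automorphism group is transparent — for $k^4$ it is $S_4$, and an involution with $2$-dimensional odd part is a product of two transpositions, producing $(1|2)$ — while for the matrix and structural-matrix algebras $(10|0),(11|0),(14|0),(15|0),(17|0)$ the group must be read off from the idempotent and radical structure. Each surviving grading is then matched against the explicit even/odd decompositions in the statement. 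This step also shows that $(4|0)$ and $(13|0)$ carry no grading with $\dim A_1=2$ (for $(4|0)=k\times k[X]/(X^3)$ every involution has the form $X\mapsto -X+bX^2$ and forces $\dim A_1=1$), which is why they are absent from the list.

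For pairwise non-isomorphism I would first separate the families by the isomorphism type of the underlying algebra, since a superalgebra isomorphism is in particular an algebra isomorphism; this already distinguishes families carrying different labels $(m|\cdot)$. Within a fixed underlying algebra the distinct families are non-conjugate involutions, and I would exhibit invariants preserved under superalgebra isomorphism that separate the listed representatives: the isomorphism type of $A_0$ ($k\times k$ versus $k[X]/(X^2)$), the dimension of $A_1^2$, and the $A_0$-bimodule structure of $A_1$. For the one-parameter family $(18;\l|2)$ the grading is the natural total-degree grading ($1,XY$ even and $X,Y$ odd), so any grading-preserving isomorphism is an honest isomorphism of the algebras $(18;\l|0)$; hence by the last clause of \thref{2.1} it exists iff $\l'=\l$ or $\l\l'=1$, and conversely the swap $X\leftrightarrow Y$ (which preserves the grading and sends $YX=\l XY$ to $YX=\l^{-1}XY$) realizes $(18;\l|2)\cong(18;\l^{-1}|2)$.

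The main obstacle is the breadth and care of the case analysis rather than a single hard idea: one must determine automorphism groups for all the non-commutative and non-semisimple algebras and enumerate their involutions without omission or duplication. The two delicate points are completeness — checking that every conjugacy class of qualifying involutions has been found for each algebra — and verifying that gradings written in superficially different bases are genuinely non-conjugate, which is exactly where the structural invariants above do the work. Normalizing each involution by first fixing $A_0$ up to $\Aut(A)$ and then classifying the compatible $2$-dimensional complements $A_1$ keeps the bookkeeping manageable.
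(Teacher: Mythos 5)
This theorem is not proved in the paper at all: it is imported verbatim from \cite[Thm 4.1]{ACZ}, so there is no in-paper argument to compare you against. Judged on its own terms, your strategy is sound and is essentially the standard (and, as far as one can tell, the cited source's) route: the identification of a $\mathbb{Z}_2$-grading with an algebra involution when $\ch(k)\neq 2$ is exactly the setup this paper adopts in \seref{3.2}, the observation that $A_0$ is a $2$-dimensional unital (hence commutative) subalgebra is correct, and reducing to $\Aut(A)$-conjugacy classes of involutions of trace $0$ over Gabriel's list in \thref{2.1} is the right reduction. Your spot checks are also correct: for $k^4$ the qualifying involutions are the double transpositions, and for $(4|0)$ and $(13|0)$ every involution has at most a $1$-dimensional $(-1)$-eigenspace, which is why they are absent. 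The argument for the $(18;\l|2)$ clause (forgetting the grading gives one direction via \thref{2.1}; the swap $X\leftrightarrow Y$ gives the other) is also fine.

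The gap is that the proposal stops exactly where the theorem's content begins. The statement is an exhaustive enumeration, and everything that makes it a theorem --- computing $\Aut(A)$ for each of the nineteen underlying algebras, listing all trace-zero involutions, sorting them into conjugacy classes without omission or duplication, and verifying that the chosen invariants ($A_0\cong k\times k$ versus $k[X]/(X^2)$, $\dim A_1^2$, the $A_0$-bimodule structure of $A_1$) actually separate every pair in the list --- is deferred as ``the main obstacle.'' In particular, the delicate cases are precisely the ones your sketch does not touch: the algebras $(11|0)$, $(14|0)$, $(15|0)$, $(16|0)$ each carry several non-conjugate gradings with $\dim A_1=2$, and distinguishing, say, $(16|1)$ from $(16|2)$ genuinely requires the left/right asymmetry of the $A_0$-action on $A_1$ (the same asymmetry this paper later encodes in the closed sets of \leref{4.11}). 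So the proposal is a correct and well-aimed plan, but as a proof it is incomplete rather than wrong; completing it means executing the full case analysis of \cite{ACZ}.
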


There exists only one 4-dimensional superalgebra $A=k\oplus I$ with $A_0=k$ and $A_1=I^2=0$. We denote it by $(9|3)$ as its underlying algebra is isomorphic to $(9)$.

\begin{theorem}\thlabel{5.1}
(Algebraic classification of $4$-dimensional graded algebras)\\
Assume that $k$ is an algebraically closed field and that $\ch(k) \neq 2$.
Let $A$ be a 4-dimensional superalgebra. Then $A$
is isomorphic to one of the following superalgebras.
Moreover each pair of listed superalgebras is non-isomorphic except the superalgebras within the same family $(18;\l|i)$, where $(18;\l|i)\cong(18;\l'|i)$ if and only if $\l'=\l$ or $\l\l'=1$, $i=0,1,2$.

$(1):$\ \ \ \ \ \ $(1|0),$  $(1|1)$,  $(1|2)$,\\
$(2):$\ \ \ \ \ \ $(2|0)$, $(2|1)$, $(2|2)$, $(2|3)$,\\
$(3):$\ \ \ \ \ \ $(3|0)$, $(3|1)$, $(3|2)$, $(3|3)$,\\
$(4):$\ \ \ \ \ \ $(4|0)$, $(4|1)$,\\
$(5):$\ \ \ \ \ \ $(5|0)$, $(5|1)$,\\
$(6):$\ \ \ \ \ \ $(6|0)$, $(6|1)$, $(6|2)$,\\
$(7):$\ \ \ \ \ \ $(7|0)$, $(7|1)$, $(7|2)$, $(7|3)$,\\
$(8):$\ \ \ \ \ \ $(8|0)$, $(8|1)$, $(8|2)$, $(8|3)$,\\
$(9):$\ \ \ \ \ \ $(9|0)$, $(9|1)$, $(9|2)$, $(9|3)$,\\
$(10):$\ \ \ \ \  $(10|0)$, $(10|1)$,\\
$(11):$\ \ \  \ \ $(11|0)$, $(11|1)$, $(11|2)$, $(11|3)$,\\
$(12):$\ \ \  \ \ $(12|0)$, $(12|1)$, $(12|2)$,\\
$(13):$\ \ \  \ \ $(13|0)$, $(13|1)$,\\
$(14):$\ \ \  \ \ $(14|0)$, $(14|1)$, $(14|2)$, $(14|3)$,\\
$(15):$\ \ \  \ \ $(15|0)$, $(15|1)$, $(15|2)$, $(15|3)$,\\
$(16):$\ \ \  \ \ $(16|0)$, $(16|1)$, $(16|2)$, $(16|3)$,\\
$(17):$\ \ \  \ \ $(17|0)$, $(17|1)$, $(17|2)$,\\
$(18;\l):$\mbox{\hspace{0.3cm}}$(18;\l|0)$, $(18;\l|1)$, $(18;\l|2)$,
where $\l\in k$ with $\l\neq 1, 0, -1$,\\
$(19):$\ \ \ \ \ $(19|0)$, $(19|1)$.\\
\end{theorem}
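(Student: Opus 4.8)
The plan is to reduce the classification of $4$-dimensional superalgebras to the classification of $\mathbb{Z}_2$-gradings on each of the underlying algebras listed in \thref{2.1}. As noted in the introduction, a superalgebra is the same as a pair $(A,\sigma)$ consisting of an algebra $A$ and an algebra involution $\sigma$; since $\ch(k)\neq 2$, an involution $\sigma$ yields the eigenspace decomposition $A=A_0\oplus A_1$ with $A_i=\{a:\sigma(a)=(-1)^i a\}$, and conversely a grading determines the involution $\sigma(a_0+a_1)=a_0-a_1$. Thus every $4$-dimensional superalgebra structure arises from a grading on one of Gabriel's algebras, and two such structures are isomorphic precisely when the gradings are related by an algebra isomorphism. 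Because the unit always lies in $A_0$, we have $\dim A_0\geq 1$, and since a superalgebra isomorphism preserves the homogeneous components it preserves the pair $(\dim A_0,\dim A_1)$. Hence I would organise the argument according to the value of $\dim A_1\in\{0,1,2,3\}$.

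The first three cases are immediate from the results already in hand. When $\dim A_1=0$ the grading is trivial, $A$ is concentrated in degree $0$, and \thref{2.1} returns exactly the superalgebras $(i|0)$, including the family $(18;\lambda|0)$ with its stated identifications. When $\dim A_0=3$ and $\dim A_1=1$, the classification in \cite[Thm~3.1]{ACZ} produces precisely the superalgebras whose second index is $1$ or $2$ in the relevant families. When $\dim A_0=\dim A_1=2$, \thref{4.1} supplies the remaining families, together with the coincidence $(18;\lambda|2)\cong(18;\lambda'|2)$ if and only if $\lambda'=\lambda$ or $\lambda\lambda'=1$.

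The only case requiring a direct argument is $\dim A_0=1$, $\dim A_1=3$, i.e. $A_0=k\cdot 1$. Here the product restricts to a bilinear map $\beta\colon A_1\times A_1\to A_0=k\cdot 1$, and associativity applied to three odd elements forces $\beta(x,y)z=\beta(y,z)x$ for all $x,y,z\in A_1$. If $\beta$ were nonzero, choosing $x,y$ with $\beta(x,y)\neq 0$ would express every $z\in A_1$ as a scalar multiple of $x$, contradicting $\dim A_1=3$. Hence $A_1^2=0$, so $A=k\cdot 1\oplus A_1$ is the square-zero extension $k[X,Y,Z]/(X,Y,Z)^2$ with its radical placed in odd degree; this is the unique superalgebra $(9|3)$ recorded just before the statement.

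Finally I would assemble the four lists and verify pairwise non-isomorphism. Superalgebras with different values of $(\dim A_0,\dim A_1)$ cannot be isomorphic, since this bigraded dimension is a graded invariant; superalgebras with distinct underlying algebras cannot be isomorphic, since a graded isomorphism is in particular an algebra isomorphism and the algebras of \thref{2.1} are pairwise distinct; and within each fixed type the pairwise non-isomorphism, as well as the sole coincidences given by the $(18;\lambda|i)$ families, is exactly what the cited theorems assert. The point to watch is completeness rather than difficulty: one must confirm that the eigenspace decomposition genuinely captures every superalgebra structure (which is where $\ch(k)\neq 2$ is used) and that the four cases are exhaustive. The only genuinely new computation, as opposed to a direct citation, is the vanishing of $A_1^2$ in the $(1,3)$ case.
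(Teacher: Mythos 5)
Your proof is correct and takes essentially the same route as the paper, which presents this theorem as a compilation of Gabriel's list for the trivial grading, the two cited classification theorems from \cite{ACZ} for the cases $\dim A_1=1$ and $\dim A_1=2$, and the single superalgebra $(9|3)$ in the case $\dim A_0=1$. Your explicit argument that associativity forces $A_1^2=0$ when $A_0=k\cdot 1$ is sound and supplies a detail the paper only asserts.
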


\section{The variety $\Salg_n$ and its properties} \selabel{3.2}

In this section we introduce the variety Salg$_n$ of $n$-dimensional superalgebras. Let $A=A_0 \oplus A_1$ be a superalgebra $A=A_0 \oplus A_1$. The $\mathbb{Z}_2$-grading of $A$ induces an involution given by $\s(a_0+a_1)=a_0-a_1$ where $a_i \in A_i$. Conversely, any algebra involution $\sigma$ of $A$ induces a $\mathbb{Z}_2$-grading on $A$, that is, $A=A_0\oplus A_1$ with $A_0=\{a\in A\mid \sigma(a)=a\}$ and $A_1=\{a\in A\mid \sigma(a)=-a\}$. Thus we can identify a superalgebra $A$ with an algebra $A$ with an involution $\s$, denoted $(A,\s)$.

Let $(A,\s)$ be an $n$-dimensional superalgebra and $\{e_1,e_2,\cdots, e_n\}$ be a basis of $A$. The (unitary associative) algebra structure on vector space $A$ gives rise to a set of structure constants $(\a^k_{ij}) \in \A^{n^3}$ determined by the multiplication of basis vectors so that
$$e_ie_j=\sum_{k=1}^n \a^k_{ij}e_k.$$
The involution $\s$ on $A$ may be also described by a set of constants $(\g^j_i) \in \A^{n^2}$ satisfying $\s(e_i)=\sum_{j=1}^n\g^j_ie_j$. It follows that to each superalgebra, $(A,\s)$, we can associate a set of augmented {\it structure constants} \index{structure constants!of a superalgebra} $(\a^k_{ij},\g^j_i) \in \A^{n^3+n^2}$, where $(\a^k_{ij})$ are the structure constants determined by the algebra structure of $A$ and $(\g^j_i)$ the constants determined by the $\mathbb{Z}_2$-grading in the above manner.  However it is not true that an arbitrary set of augmented structure constants can give rise to a superalgebra. The structure constants must obey certain relations to reflect how we define a superalgebra.

As a superalgebra $(A,\s)$ must in particular be a unitary associative algebra, we have a multiplicative identity which we always take to be the first element of our basis, $e_1$. Then to be a unitary associative algebra we have the following conditions:

\begin{eqnarray*}
& e_1e_i=e_i\\
& e_ie_1=e_i\\
& (e_ie_j)e_k=e_i(e_je_k)
\end{eqnarray*}

Which translate into the following relations amongst the structure constants:

\begin{eqnarray}\eqnlabel{3.1}
& \a^j_{1i}-\d^j_i=0\\
& \a^j_{i1}-\d^j_i=0\\
& \sum_{l=1}^n(\a^l_{ij}\a^m_{lk}-\a^m_{il}\a^l_{jk})=0
\end{eqnarray}

For $\s$ to be an algebra involution means that:

\begin{eqnarray*}
& \s(e_1)=e_1\\
& \s(e_ie_j)=\s(e_i)\s(e_j)\\
& \s^2(e_i)=e_i
\end{eqnarray*}

These become the following relations in terms of the structure constants:

\begin{eqnarray}\eqnlabel{3.2}
& \g^j_1-\d^j_1=0\\
& \sum_{k=1}^n\a^k_{ij}\g^m_k-\sum_{k,l=1}^n \g^k_i\g^l_j\a^m_{kl}=0\\
& \sum_{j=1}^n\g^j_i\g^k_j-\d^k_i=0
\end{eqnarray}

It is precisely those structure constants obeying the relations  (1)-(6) given above which give rise to superalgebras.

\begin{definition}\delabel{2.1} \index{$\Salg_n$}
The equations (1)-(6) given above cut out a variety in $\A^{n^3+n^2}$ which we shall call $\Salg_n$ --- the  variety of $n$-dimensional superalgebras.
\end{definition}

In the rest of this paper we will study the geometry of $\Salg_n$. The geometry of $\Salg_n$ is influenced by that of $\Alg_n$, but $\Salg_n$ has a richer geometrical structure.

\begin{definition}\delabel{2.2} \index{$\SA_n$}
We define $\SA_n$ -- the variety of $n$-dimensional superalgebras not requiring existence of a unit --- to be the subvariety of $\A^{n^3+n^2}$ cut out by equations (3),(5) and (6).
\end{definition}

One checks that if $A$ is a unitary algebra and $\s:A\rightarrow A$ satisfies $\s(xy)=\s(x)\s(y)$ and $\s^2 = \id_A$ then $\s(1_A)=1_A$ (This follows from the more general fact that any invertible homomorphism $\s:A \rightarrow B$ between rings with unit must map the identity to the identity, i.e. $\s(1_A)=1_B$), which after a little thought shows that $\Salg_n=\SA_n \cap V(\{\a^j_{1i}-\d^j_i,\a^j_{i1}-\d^j_i\})$. So we obtain the following result:

\begin{lemma}\lelabel{}
$\Salg_n$ is a closed subvariety of $\SA_n$.
\end{lemma}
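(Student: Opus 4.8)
The plan is to realize $\Salg_n$ as the intersection of $\SA_n$ with a Zariski-closed subset of the ambient affine space $\A^{n^3+n^2}$, from which closedness in the subspace topology is immediate. The defining equations separate naturally: $\SA_n$ is the common zero locus of the associativity, multiplicativity and involutivity relations (3), (5), (6), whereas $\Salg_n$ is cut out by all six relations (1)--(6). The extra relations needed for $\Salg_n$ are the unit conditions (1), (2), given by the polynomials $\a^j_{1i}-\d^j_i$ and $\a^j_{i1}-\d^j_i$, together with (4), $\g^j_1-\d^j_1$. The crux is to show that (4) is automatic once (1), (2) and the $\SA_n$-relations hold, so that
\[\Salg_n=\SA_n\cap V(\{\a^j_{1i}-\d^j_i,\ \a^j_{i1}-\d^j_i\}).\]

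First I would record the easy inclusion: any point of $\Salg_n$ satisfies all of (1)--(6), hence in particular lies in $\SA_n$ and in the zero locus $Z:=V(\{\a^j_{1i}-\d^j_i,\ \a^j_{i1}-\d^j_i\})$ of the unit relations. Since $Z$ is cut out by genuine (indeed affine-linear) polynomials in the coordinate functions $\a^k_{ij},\g^j_i$ on $\A^{n^3+n^2}$, it is Zariski-closed, and therefore $\SA_n\cap Z$ is closed in the induced topology on $\SA_n$. So once the displayed equality is established, the lemma follows formally.

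The reverse inclusion is the main point, and it rests on the ring-theoretic observation already recorded above. Given a point of $\SA_n\cap Z$, the constants $(\a^k_{ij})$ define an associative multiplication (by (3)) for which $e_1$ is a two-sided identity (by (1) and (2)), so the underlying $(A,\cdot)$ is a unital associative algebra with $1_A=e_1$; meanwhile the constants $(\g^j_i)$ define a map $\s$ that is multiplicative (by (5)) and satisfies $\s^2=\id$ (by (6)), hence is bijective, i.e.\ an algebra automorphism. By the cited fact that an invertible homomorphism of rings with unit carries the identity to the identity, $\s(e_1)=\s(1_A)=1_A=e_1$, which is precisely relation (4), namely $\g^j_1=\d^j_1$ for all $j$. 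Thus the point satisfies all of (1)--(6) and lies in $\Salg_n$, giving the displayed equality and hence $\Salg_n=\SA_n\cap Z$ with $Z$ closed, as required. The only substantive step is the redundancy of (4), and this is handled entirely by the preceding remark on invertible ring homomorphisms; the remaining steps are formal topology and bookkeeping.
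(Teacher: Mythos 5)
Your proof is correct and follows exactly the paper's route: the paper likewise establishes $\Salg_n=\SA_n\cap V(\{\a^j_{1i}-\d^j_i,\ \a^j_{i1}-\d^j_i\})$ by observing that relation (4) is forced, since a multiplicative, linear involution of a unital algebra is an invertible ring homomorphism and hence fixes the identity. Your write-up simply makes explicit the bookkeeping that the paper leaves as ``after a little thought.''
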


It is important to notice the way that we have defined $\Salg_n$ --- requiring the identity to be fixed --- is analogous to the way $\Alg_n$ is defined in \cite{Leb}, but is not analogous to the way $\Alg_n$ was defined in \cite{Gab}. We may define $\Salg'_n$ \index{$\Salg'_n$} as an analogue to Alg$_n$ in \cite{Gab}, which is the subset of $\SA_n$ consisting of superalgebras with unit, but not necessarily requiring the unit to be the first element of the basis or even in the basis.  A similar proof to the one given in \cite{CB} shows that
$\Salg'_n$ is an open affine subvariety of $\SA_n$.

Similarly to the situation remarked in \cite{Leb}, since for our definition of $\Salg_n$ we require that the identity be the first element in the basis of any superalgebra, a subgroup $G_n$ of $\GL_n$ acts on $\Salg_n$ (not the full group $\GL_n$ as one may expect). This action is induced by considering what happens to the structure constants when one makes a basis change. As the identity must be the first element in the basis, this means that the first column of the matrix describing the basis change must be $\begin{pmatrix} 1 & 0 & \ldots & 0 \end{pmatrix}^T$ (identifying the given basis $\{e_1=1,e_2,\ldots,e_n\}$ with the standard basis vectors for $k^n$). Hence we can describe $G_n$ for $n\geq 2$ as follows: $G_n = \left\{ \begin{pmatrix} 1 & b^T \\ 0 & \Sigma \\ \end{pmatrix}: \Sigma \in \GL_{n-1}, b \in k^{n-1} \right\}$. Thus the algebraic group $G_n$ is of dimension $n^2-n$.

\begin{remark}\relabel{2.5}
If one so desired, our methods could be modified to study $\Salg'_n$ with the action of $\GL_n$. However, one would hope that the geometry of both spaces are very similar --- in particular we would like the degeneration partial orders induced in each space to coincide (the degeneration partial order will be introduced in \seref{3.3}). We would hope that such properties are intrinsic to the superalgebras and thus not depend on the way in which they are represented by a particular variety. We have not investigated this thoroughly, although in \cite{Leb}, it is remarked that this is the case for the degeneration partial orders in $\Alg_n$ and $\Alg'_n$.
\end{remark}

Let $\L=(\l^j_i) \in G_n$ and $(\nu^j_i)=\L^{-1}$. Then we can describe the action of $G_n$ on $\Salg_n$ as follows: $$\L \cdot (\a^k_{ij},\g^j_i)=(\sum_{l,p,q=1}^n\nu^k_l\a^l_{pq}\l^p_i\l^q_j,\sum_{k,l=1}^n\nu^j_k\g^k_l\l^l_i)=(\a'^k_{ij},\g'^j_i)$$ Firstly, recall that the formula for the inverse of a matrix means that we can express the entries $\nu^j_i$ of the matrix $\L^{-1}$ as a polynomial in the entries $\l^j_i$ of the matrix $\L$ and $1/\det(\L)$. Then the above formula expresses the new structure constants $\a'^k_{ij},\g'^j_i$ in $\Salg_n$ as a polynomial in the old structure constants $\a^k_{ij},\g^j_i$, the entries of the matrix $\L \in G_n$ and $1/\det(\L)$ which has non-vanishing denominator. Hence the action gives us a morphism $G_n \times \Salg_n \rightarrow \Salg_n$. The same reasoning also shows that the transport of structure action on $\Alg_n$ gives a morphism $G_n \times \Alg_n \rightarrow \Alg_n$.

We may refer to the above action of $G_n$ on $\Salg_n$ as the {\bf transport of structure action}. \index{transport of structure action!for superalgebras} However as it is the only action of $G_n$ on $\Salg_n$ considered here, we shall often simply refer to it as the action of $G_n$ on $\Salg_n$.
It is clear that the orbits of $\Salg_n$ under the action of $G_n$ can be identified with the isomorphism classes of $n$-dimensional superalgebras.

For an $n$-dimensional superalgebra $A$, we will sometimes use $G_n \cdot A$ to represent the orbit in $\Salg_n$ which the isomorphism class of $A$ can be identified with. If in some basis the superalgebra $A$ has structure constants $(\a^k_{ij},\g^j_i)$, then $G_n \cdot A=G_n \cdot (\a^k_{ij},\g^j_i)$.

There are two interesting morphisms between $\Salg_n$ and $\Alg_n$. They arise from the following observations: any $n$-dimensional superalgebra may be regarded as an $n$-dimensional algebra and any $n$-dimensional algebra can be endowed with the trivial $\mathbb{Z}_2$-grading making it into an $n$-dimensional superalgebra.

The first morphism: $U:\Salg_n \rightarrow \Alg_n$ is defined by $(\a^k_{ij},\g^j_i) \mapsto (\a^k_{ij})$ is the forgetful map, which forgets the superalgebra structure on $A$ and only remembers the algebra structure on $A$.

The second morphism: $I:\Alg_n \rightarrow \Salg_n$ is defined by $(\a^k_{ij}) \mapsto (\a^k_{ij},\d^j_i)$ where $\d^j_i$ is the Kronecker delta function. This takes an algebra structure on $A$ and endows it with the trivial $\mathbb{Z}_2$-grading making it a superalgebra on $A$.

Notice that the subset of $\Salg_n$ consisting of superalgebras with the trivial $\mathbb{Z}_2$-grading is a closed subset of $\Salg_n$ and is given by $V(\{\g^j_i-\d^j_i\}) \cap \Salg_n$. The morphism $I$ above identifies $\Alg_n$ with this subset. This result is a part of the following proposition.

\begin{proposition}\prlabel{2.8}
The morphisms $U$ and $I$ described above are continuous closed maps. Moreover $I$ provides an isomorphism of $\Alg_n$ with the closed subset of $\Salg_n$ consisting of the superalgebras with the trivial $\mathbb{Z}_2$-grading.
\end{proposition}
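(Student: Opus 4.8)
The plan is to treat the two maps separately: continuity of both will come for free from their being restrictions of polynomial maps, closedness of $I$ will follow from a closed-immersion argument, and closedness of $U$ will be attacked through stability under specialization. First, $U$ is the restriction to $\Salg_n$ of the linear projection $\A^{n^3+n^2}\to\A^{n^3}$ that forgets the coordinates $\g^j_i$, and $I$ is the restriction to $\Alg_n$ of the affine-linear map $(\a^k_{ij})\mapsto(\a^k_{ij},\d^j_i)$; both are therefore morphisms of affine varieties, in particular continuous. Before proceeding I would check that $I$ really lands in $\Salg_n$: the equations (1)--(3) involve only the $\a^k_{ij}$ and already hold because $\a\in\Alg_n$, while substituting $\g^j_i=\d^j_i$ into (4)--(6) makes each vanish identically (the trivial grading $\s=\id$ is an involution). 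Hence $I(\Alg_n)\subseteq\Salg_n$.

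For $I$ I would identify its image with $Z:=V(\{\g^j_i-\d^j_i\})\cap\Salg_n$, the locus of trivially graded superalgebras, which is closed in $\Salg_n$. The map $I\colon\Alg_n\to Z$ is a bijection whose inverse is the restriction $U|_Z$, itself a morphism, so $I$ is an isomorphism of varieties onto $Z$. This simultaneously proves the \emph{moreover} clause and shows that $I$ is a homeomorphism onto the closed set $Z$: for closed $C\subseteq\Alg_n$ the image $I(C)$ is closed in $Z$, and since $Z$ is closed in $\Salg_n$ it is closed in $\Salg_n$. Thus $I$ is a closed map. I expect this portion to be routine.

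The genuine difficulty is the closedness of $U$, and I expect \emph{this} to be the main obstacle. Since $U$ is a morphism, $U(C)$ is constructible for closed $C$ by Chevalley's theorem, and a constructible subset of a variety over the algebraically closed field $k$ is closed exactly when it is stable under specialization; so the plan is to show that if $(\a,\g)\in C$ and the underlying algebra $\a$ degenerates to $\a'$, then $\a'$ again lifts to a point of $C$. Concretely one must extend the involution $\g$ along a one-parameter degeneration $\a\rightsquigarrow\a'$ to an involution of $\a'$ that still lies in $C$. This cannot simply be read off from properness: the fibres of $U$ are the varieties of algebra involutions, e.g. over $M_2$ the non-identity involutions form the non-complete family of conjugations by $\begin{pmatrix}1 & t\\ 0 & -1\end{pmatrix}$, so an involution can run off to infinity (its $\pm1$-eigenspace decomposition losing transversality) while the algebra stays bounded, and $U$ is therefore not proper.

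The substance of the argument must accordingly be to rule this escape out along degenerations constrained to a closed $C$. The natural route is to compactify the grading data: record the homogeneous components as a point of $\Gr(\dm A_0,n)\times\Gr(\dm A_1,n)$ and exploit properness of the projection $\Alg_n\times\Gr\times\Gr\to\Alg_n$, so that every boundary algebra $\a'$ acquires a limiting pair $(B_0,B_1)$ of subspaces satisfying $1\in B_0$ and $B_iB_j\subseteq B_{i+j}$. One then has to show that, when the degeneration is forced to remain inside $C$, this limiting pair is still transverse, i.e. $B_0\oplus B_1=A'$, so that it defines an honest involution of $\a'$ belonging to $C$ rather than a degenerate point at infinity. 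Verifying that transversality persists is precisely where the work lies, and it is the step I would expect to require the real effort.
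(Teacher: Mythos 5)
Your treatment of $I$ and of continuity is correct and complete: both maps are restrictions of (affine-)linear maps, $I$ lands in $Z=V(\{\g^j_i-\d^j_i\})\cap\Salg_n$, and $I\colon\Alg_n\to Z$ is an isomorphism with inverse $U|_Z$, which yields the ``moreover'' clause and the closedness of $I$ in one stroke. Be aware, however, that the paper states \prref{2.8} with no proof at all, so there is no argument of the authors' to measure yours against; the burden of proving closedness of $U$ falls entirely on you, and your write-up explicitly defers the decisive step (``verifying that transversality persists is precisely where the work lies''). That is an admitted gap, not a proof.

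Moreover the gap cannot be closed in the form you propose, because for an \emph{arbitrary} closed subset the conclusion fails. Take $n=3$ and, for $t\neq 0$, the point of $\Salg_3$ with basis $e_1=1,e_2,e_3$, products $e_2e_2=te_2$, $e_2e_3=te_3$, $e_3e_2=e_3e_3=0$ (the transport of the upper triangular $2\times 2$ algebra under $e_i\mapsto te_i$), and involution $\s(e_2)=e_2+t^{-1}e_3$, $\s(e_3)=-e_3$; a direct check shows each such point satisfies the equations (1)--(6). The only nonconstant coordinates along this curve are $\a^2_{22}=\a^3_{23}=t$ and $\g^3_2=t^{-1}$, so the curve equals the vanishing locus of $\a^2_{22}-\a^3_{23}$, of $\a^2_{22}\g^3_2-1$, and of the remaining coordinates minus their constant values --- a hyperbola --- and is therefore a \emph{closed} subset $C$ of $\Salg_3$. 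Yet $U(C)=\{\a(t):t\neq 0\}$ is a punctured line in $\Alg_3$ whose missing limit point is $C_3=k[X,Y]/(X,Y)^2$, so $U(C)$ is not closed. This is exactly the escape you feared: the two eigenspaces of $\s$ collide in the limit while the algebra converges, and constraining the curve to a closed set does not prevent it. What is true, and what the paper actually uses (\coref{2.9} and \prref{2.16}), is the corresponding statement for $G_n$-stable closed sets, in particular orbit closures: there one can first renormalize the involution along the curve by elements of $G_n$ into the standard form $\mathrm{diag}(I_i,-I_{n-i})$ --- staying inside the set by stability --- before passing to the limit, and that equivariant renormalization is precisely the ingredient missing from your Grassmannian compactification. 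You should either add it and weaken the statement to the $G_n$-stable case, or note that the full claim as stated admits the counterexample above.
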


We point out that both morphisms $U$ and $I$ are $G_n$-equivariant. That is, for $\L \in G_n$ and $(\a^k_{ij},\g^j_i)\in$ Salg$_n$, we have $U(\L \cdot (\a^k_{ij},\g^j_i))=\L \cdot U((\a^k_{ij},\g^j_i))$ and $I(\L \cdot (a^k_{ij}))=\L \cdot I((\a^k_{ij}))$.
As a consequence of the $G_n$-equivariance of $U$, we obtain the following:

\begin{corollary}\colabel{2.9}
$U\left(\overline{G_n \cdot (\a^k_{ij},\g^j_i)}\right)=\overline{G_n \cdot (\a^k_{ij})}$.
\end{corollary}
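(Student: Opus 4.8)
The plan is to derive this corollary purely formally from the two properties of $U$ recorded in \prref{2.8} together with the $G_n$-equivariance noted immediately above it: namely that $U$ is a continuous closed map and that $U(\L \cdot (\a^k_{ij},\g^j_i)) = \L \cdot U((\a^k_{ij},\g^j_i))$ for every $\L \in G_n$. Nothing specific to the geometry of superalgebras is needed beyond these facts, so the argument is essentially topological.

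First I would record the effect of $U$ on a single orbit. Writing $s = (\a^k_{ij},\g^j_i)$ and $a = U(s) = (\a^k_{ij})$, the equivariance gives
$$U(G_n \cdot s) = \{U(\L \cdot s) : \L \in G_n\} = \{\L \cdot a : \L \in G_n\} = G_n \cdot a,$$
so $U$ carries the orbit of $s$ in $\Salg_n$ exactly onto the orbit of $a$ in $\Alg_n$.

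Next I would invoke the general fact that a continuous closed map $f$ satisfies $f(\overline{S}) = \overline{f(S)}$ for every subset $S$. The inclusion $U(\overline{G_n \cdot s}) \subseteq \overline{U(G_n \cdot s)}$ is immediate from continuity. For the reverse inclusion, since $U$ is closed and $\overline{G_n \cdot s}$ is closed, the set $U(\overline{G_n \cdot s})$ is closed and contains $U(G_n \cdot s)$, hence it contains the closure $\overline{U(G_n \cdot s)}$. Combining the two inclusions yields $U(\overline{G_n \cdot s}) = \overline{U(G_n \cdot s)}$.

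Finally, substituting the orbit computation from the first step gives
$$U\left(\overline{G_n \cdot (\a^k_{ij},\g^j_i)}\right) = \overline{U(G_n \cdot s)} = \overline{G_n \cdot a} = \overline{G_n \cdot (\a^k_{ij})},$$
as required. I do not expect any genuine obstacle; the only point needing care is the closed-map half of the argument, which is precisely what \prref{2.8} supplies and is the reason closedness of $U$ (and not merely continuity) was recorded there.
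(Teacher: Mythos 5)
Your argument is correct and is exactly the reasoning the paper intends: the corollary is stated as an immediate consequence of the $G_n$-equivariance of $U$ together with \prref{2.8} (continuity and closedness), and the paper omits the details you have supplied. Both the orbit computation $U(G_n\cdot s)=G_n\cdot U(s)$ and the identity $U(\overline{S})=\overline{U(S)}$ for a continuous closed map are used exactly as you describe.
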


Suppose that one has a superalgebra $A$ with $\dim A_0=i$ and $\mathbb{Z}_2$-grading given by the algebra involution $\s$. Now change to a homogeneous basis (say by a linear map represented by the matrix $\L$), which clearly has $\mathbb{Z}_2$-grading $\s'$ given by the linear map represented by the diagonal matrix with $1$ for the first $i$ entries and $-1$ for the last $n-i$ entries. From the above we have $\s'=\L\s \L^{-1}$, so $\s=\L^{-1}\s'\L$.  Thus $\tr(\s)=\tr(\L^{-1}\s'\L)=\tr(\s'\L\L^{-1})=\tr(\s')=i-(n-i)=2i-n$ and $\det(\s)=\det(\L^{-1}\s'\L)=\det(\L^{-1})\det(\s')\det(\L)=\det(\s')=(-1)^{n-i}$.

We now define $\Salg^i_n$ to be the subset of $\Salg_n$ consisting  of the superalgebras $A$ with $\dim A_0=i$. Obviously we have $\Salg_n = \bigcup^n_{i=1}\Salg^i_n$. Hence, from above, {\bf the trace and determinant are constant on} $\Salg^i_n$. It is clear that these subsets must be disjoint. We are interested in when these subsets are also closed. The following lemma gives some sufficient conditions for this to be the case.

Before stating the next couple of results we mention how vital the assumption that $\ch(k) \neq 2$ is to \leref{2.10} and \prref{2.12}. These are very basic results about the geometry of $\Salg_n$ --- the study of $\Salg_n$ over an algebraically closed field $k$ with $\ch(k)=2$ would require new techniques as the proofs of these two results do not work in the case $\ch(k)=2$.

\begin{lemma}\lelabel{2.10}
The sets $\Salg^i_n$ are closed subsets of $\Salg_n$ in the following situations:\\
\begin{tabular}{ll}
(a) & $\ch(k)=p$ and $n \leq 2p$\\
(b) & $\ch(k)=0$ (with no restriction on $n$ in this case)\\
(c) & $n \leq 6$ (for any algebraically closed field $k$ with $\ch(k) \neq 2$)
\end{tabular}
\end{lemma}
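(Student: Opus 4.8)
The plan is to separate the pieces $\Salg^i_n$ by the values of two regular functions on $\Salg_n$ that are already computed in the paragraph preceding the lemma: the trace $\tr(\s)=\sum_{i=1}^n\g^i_i$ and the determinant $\det(\s)=\det\big((\g^j_i)\big)$ of the involution. Both are polynomials in the coordinates $\g^j_i$, so the map $\Phi=(\tr,\det)\colon\Salg_n\to\A^2$ is a morphism, in particular continuous; hence the fibre $\Phi^{-1}(c)$ over any point $c\in\A^2$ is closed in $\Salg_n$. From the discussion before the lemma we know that $\Phi$ is constant on $\Salg^i_n$ with value $c_i:=(2i-n,\,(-1)^{n-i})$. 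Since $\Salg_n=\bigcup_{i=1}^n\Salg^i_n$ is a disjoint union, if the points $c_1,\dots,c_n$ are pairwise distinct then $\Salg^i_n=\Phi^{-1}(c_i)\cap\Salg_n$, which is closed. Thus the whole lemma reduces to the combinatorial claim that $i\mapsto c_i$ is injective on $\{1,\dots,n\}$ under each hypothesis.

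To prove injectivity I would suppose $c_i=c_{i'}$ with $1\le i<i'\le n$. Equality of first coordinates gives $2i\equiv 2i'$ in $k$, and since $\ch(k)\neq2$ the element $2$ is invertible, so $i\equiv i'$ modulo $\ch(k)$. In case (b), where $\ch(k)=0$, this already forces $i=i'$, a contradiction, so the trace alone suffices. In case (a), where $\ch(k)=p$, we may write $i'-i=mp$ with $m\ge1$; here the determinant enters, for equality of second coordinates $(-1)^{n-i}=(-1)^{n-i'}$ forces $i'-i$ even, i.e.\ $mp$ even, and as $p$ is odd this forces $m$ even, hence $m\ge2$ and $i'-i\ge 2p$. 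But $i'-i\le n-1\le 2p-1<2p$, a contradiction. So $i\mapsto c_i$ is injective and each $\Salg^i_n$ is closed.

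Case (c) would then follow formally from (a) and (b): if $\ch(k)=0$ apply (b), while if $\ch(k)=p$ then $p\ge3$ since $\ch(k)\neq2$, so $2p\ge6\ge n$ and (a) applies.

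The only genuinely delicate point is the passage from $n\le p$ to $n\le 2p$. Using the trace by itself one obtains distinct values $2i-n$ only when the indices are distinct modulo $p$, that is only for $n\le p$; the determinant contributes precisely the parity of $n-i$, and the interplay of this parity with the oddness of $p$ is what doubles the admissible range. I expect this parity bookkeeping --- ensuring that the smallest possible collision gap is $2p$ rather than $p$ --- to be the main thing to get right, everything else being the routine observation that $\tr$ and $\det$ are polynomial, hence continuous, invariants whose fibres are closed.
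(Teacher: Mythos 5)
Your argument is correct and is essentially the paper's own proof: both separate the pieces $\Salg^i_n$ by the trace and determinant of the involution, observe these are polynomial (hence closed-fibre) functions constant on each piece, and reduce to the same combinatorial fact that a collision of both invariants forces the indices to differ by an even multiple of $p$, hence by at least $2p$. The only cosmetic difference is that the paper writes the fibres as $V(\{\tr-(2i-n),\,\det-(-1)^{n-i}\})\cap\Salg_n$ rather than as $\Phi^{-1}(c_i)$.
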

\begin{proof}
Define $\mathrm{S}^i_n=V(\{\sum^n_{j=1}\g^j_j-(2i-n), \sum_{\pi}\sgn(\pi)\g^{\pi(1)}_1\ldots\g^{\pi(n)}_n-(-1)^{n-i}\})\cap \Salg_n$ for $i \in \{1,\ldots,n\}$, (where $\sgn(\pi)$ denotes the signature of the permutation $\pi$, and the sum is taken over all permutations of $\{1,\ldots,n\}$). Thus the $\mathrm{S}^i_n$ are closed subsets of $\Salg_n$. From the statements above, it is clear that $\Salg^i_n \subseteq \mathrm{S}^i_n$. The first polynomial $\sum^n_{j=1}\g^j_j$ represents the trace of the $\mathbb{Z}_2$-grading and the second $\sum_{\pi}\sgn(\pi)\g^{\pi(1)}_1\ldots\g^{\pi(n)}_n$ represesnts its determinant.

For the proof of part (a), consider the following. Let $i,j \in \{1, \ldots, n\}, i \neq j$. If $i$ and $j$ differ by $2p$ then both the traces and the determinants for $\Salg^i_n$ and $\mathrm{S}^j_n$ will agree, so $\Salg^i_n \subseteq \mathrm{S}^j_n$. If $i$ and $j$ differ by less than $2p$, then the traces of $\Salg^i_n$ and $\mathrm{S}^j_n$ will differ unless $i$ and $j$ differ by $p$, in which case, since $p$ is odd (remember we are excluding  the case $\ch(k)=2$ throughout this paper) the determinants will differ. Thus $\Salg^i_n$ and $\mathrm{S}^j_n$ are disjoint. From these comments one can see that we have the equality $\Salg^i_n = \mathrm{S}^i_n$ for all $i \in \{1, \ldots, n\}$ if and only if there are no two distinct integers $i,j \in \{1, \ldots, n\}$ which differ by $2p$. One can always be sure that this is condition is met when $n \leq 2p$. This completes the proof of (a).

For part (b), we have $\ch(k)=0$. Here one simply needs to consider the traces on $\Salg^i_n$ and $\mathrm{S}^j_n$, which must differ unless $i=j$, showing that the subsets $\Salg^i_n$ and  $\mathrm{S^j_n}$ are disjoint unless $i=j$, that is $\Salg^i_n=\mathrm{S}^i_n$.

Finally, for part (c) we combine the results of (a) and (b). In the case of positive characteristic $p$, then as $p \geq 3$, from part (a) we know that these subsets are disjoint and closed for $n \leq 6$, while in the case of zero characteristic from part (b) we know that these subsets are disjoint and closed for any $n$. Combine these statements to see that regardless of the characteristic of the field $k$, the subsets $\Salg^i_n$ are all closed subsets when $n \leq 6$.
\end{proof}

\begin{remark}\relabel{2.11}
\leref{2.10} is likely to be general enough for us to use in all cases where determining irreducible components of $\Salg_n$ is currently practical. The irreducible components of $\Alg_n$ have so far only been described for $n \leq 5$ (with some special --- ``rigid" --- components described in the case $n=6$), and finding these irreducible components is a more basic question than finding the irreducible components of $\Salg_n$. However, it is of theoretical interest to determine whether the subsets $\Salg^i_n$ are in fact closed subsets of $\Salg_n$ for all $n$ and any field $k$ with $\ch(k) \neq 2$, or if there is some field $k$ of prime characteristic, $p$, and some integer, $n$, such that the variety $\Salg_n$ over the field $k$ has one of its subsets $\Salg^i_n$ which is not closed. As we shall see, when the $\Salg^i_n$ are closed they form the connected components of $\Salg_n$. Thus it would be interesting to know if the geometry of $\Salg_n$ can change in this manner for some integer, $n$, and field, $k$, of prime characteristic, $p$.
\end{remark}

Using the notation from the proof of \leref{2.10} we have the following situation for the variety $\Salg_7$ over an algebraically closed field of characteristic $3$. $\mathrm{S}^1_7=\mathrm{S}^7_7=V(\{\sum^n_{j=1}\g^j_j-1, \sum_{\pi}\sgn(\pi)\g^{\pi(1)}_1\ldots\g^{\pi(n)}_n-1\}) \cap \Salg_7$. This is the smallest example of where the above lemma may not be applied. While it is clear that $\Salg^1_7$ and $\Salg^7_7$ are disjoint, it may be possible that $\overline{\Salg^1_7}$ and $\Salg^7_7$ have some point in common. (Recall that we remarked earlier that $\Salg^n_n$ is closed --- so $\overline{\Salg^n_n}=\Salg^n_n$ and thus we do know that  $\overline{\Salg^7_7}=\Salg^7_7$ and $\Salg^1_7$ are disjoint).

\begin{proposition}\prlabel{2.12}
$\Salg_n$ is disconnected for $n \geq 2$.
\end{proposition}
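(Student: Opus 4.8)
The plan is to prove $\Salg_n$ is disconnected by exhibiting a nonconstant continuous function on $\Salg_n$ that takes values in a disconnected set, which will separate the space into at least two nonempty pieces that are simultaneously open and closed. The natural candidate, using the machinery already developed above, is the trace of the $\mathbb{Z}_2$-grading, namely the polynomial map $t \colon \Salg_n \to \A^1$ defined by $t(\a^k_{ij},\g^j_i) = \sum_{j=1}^n \g^j_j$. This is a morphism, hence continuous. As observed just before \leref{2.10}, on each subset $\Salg^i_n$ the trace equals the constant $2i-n$, so the image of $t$ is contained in the finite set $\{2i - n : i = 1, \ldots, n\} = \{-n, -n+2, \ldots, n-2, n\}$.

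First I would verify that the image of $t$ contains at least two distinct values, so that $t$ is genuinely nonconstant. This is where I would pin down concrete superalgebras: the subset $\Salg^n_n$ is nonempty (it contains every algebra with the trivial grading, e.g.\ via the morphism $I$ of \prref{2.8}), giving a point with trace $n$; and $\Salg^{n-1}_n$ is nonempty for $n \geq 2$ (take any algebra possessing a nontrivial involution with a one-dimensional $-1$-eigenspace, such as a superalgebra with $\dim A_0 = n-1$, $\dim A_1 = 1$), giving a point with trace $n-2$. Since $\ch(k)\neq 2$, the values $n$ and $n-2$ are distinct in $k$, so $t$ takes at least two values.

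Next I would convert the nonconstancy of $t$ into a disconnection. Because $\ch(k) \neq 2$, the values $2i - n$ for $i = 1, \ldots, n$ are pairwise distinct elements of $k$: if $2i - n = 2j - n$ then $2(i-j) = 0$, and as $2 \neq 0$ this forces $i = j$. Hence the image $t(\Salg_n)$ is a finite subset of $\A^1$ with more than one point. A finite subset of $\A^1$ with the subspace Zariski topology is discrete, hence totally disconnected; in particular each singleton $\{2i-n\}$ occurring in the image is both open and closed in $t(\Salg_n)$. Pulling back along the continuous map $t$, the preimage $t^{-1}(\{n\})$ is a nonempty proper clopen subset of $\Salg_n$ (nonempty by the previous paragraph, proper because $t^{-1}(\{n-2\})$ is a nonempty disjoint set). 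This exhibits $\Salg_n$ as a disjoint union of two nonempty open sets, so $\Salg_n$ is disconnected.

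The only delicate point, and the step I expect to require the most care, is the insistence that $\ch(k) \neq 2$: this is exactly what guarantees the separating values are distinct in $k$ and therefore that the fibers of $t$ really are separated rather than collapsed together. The authors flag precisely this dependence in the remark preceding \leref{2.10}, so I would make sure the argument foregrounds the inequality $2 \neq 0$ rather than treating it as automatic. The remaining verifications—continuity of $t$ as a polynomial morphism, and nonemptiness of the two fibers—are routine given the earlier development, so I would keep them brief and concentrate the exposition on the clopen separation argument.
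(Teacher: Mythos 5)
Your argument is correct in substance and is a close cousin of the paper's: both proofs pull back a clopen decomposition of a finite subset of $\A^1$ along a polynomial invariant of the grading. The paper uses the \emph{determinant} $\sum_{\pi}\sgn(\pi)\g^{\pi(1)}_1\cdots\g^{\pi(n)}_n$, whose only possible values are $1$ and $-1$; since $\ch(k)\neq 2$ these are automatically distinct, and the two fibers $X_{\pm 1}$ are immediately exhibited as vanishing loci of explicit polynomials, hence closed, disjoint, covering, and therefore both open. You use the \emph{trace} instead, which works but costs you an extra step (showing singletons of the finite image are clopen and that two specific fibers are nonempty); the determinant is the slightly cleaner choice precisely because its value set never needs any case analysis.

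There is one genuinely false intermediate claim you should strike: the assertion that the values $2i-n$, $i=1,\dots,n$, are pairwise distinct in $k$ because $2\neq 0$. From $2(i-j)=0$ and $2\neq 0$ you may conclude only that the image of the integer $i-j$ in $k$ is zero, i.e.\ $p \mid (i-j)$ when $\ch(k)=p>0$ --- not that $i=j$ as integers. The paper itself exhibits the failure: for $\Salg_7$ over a field of characteristic $3$, the traces of $\Salg^1_7$ and $\Salg^7_7$ coincide ($-5\equiv 7 \pmod 3$). Fortunately your proof does not need pairwise distinctness: the image of $t$ is finite regardless, and the only separation you actually use is $n \neq n-2$, which does follow from $2\neq 0$. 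So the proof survives once the false sentence is deleted, provided you also make the nonemptiness of the two chosen fibers concrete --- e.g.\ the trivially graded superalgebras land in $t^{-1}(\{n\})$ via the morphism $I$, and the superalgebra $C_n(n-1)$ of the paper lands in $t^{-1}(\{n-2\})$, which lies in the complement since $n\neq n-2$ in $k$.
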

\begin{proof}
By the comments above \leref{2.10}, for each superalgebra, the determinant of the $\mathbb{Z}_2$-grading is either $-1$ or $1$. Since $\ch(k)\neq 2$, $-1$ and $1$ are distinct elements of $k$, hence $X_{-1}=V(\{\sum_{\pi}\sgn(\pi)\g^{\pi(1)}_1\ldots\g^{\pi(n)}_n-(-1)\})\cap \Salg_n$ and $X_1=V(\{\sum_{\pi}\sgn(\pi)\g^{\pi(1)}_1\ldots\g^{\pi(n)}_n-1\})\cap \Salg_n$ are disjoint closed subsets whose union is $\Salg_n$. But $X_{-1}=\Salg_n \bs X_1$ and $X_1=\Salg_n \bs X_{-1}$, hence both are open sets too. Thus $\Salg_n$ is a union of two disjoint open subsets. Both subsets are non-empty for $n \geq 2$. Thus for $n \geq 2$, $\Salg_n$ is disconnected.
\end{proof}
From here onwards, we make {\bf the assumption that $\Salg^i_n$ are  closed subsets of} $\Salg_n$.
The main examples which we are interested in are $\Salg_n$ for $n=2,3,4$, and in these cases this assumption is satisfied by \leref{2.10}.

Since some algebras and superalgebras will arise frequently, we shall name them for convenience.
Define $C_n$ to be the algebra $k[X_1, \ldots, X_{n-1}]/(X_1, \ldots, X_{n-1})^2$ and for $i=1, \ldots, n$, let $C_n(i)$ be the superalgebra which has $C_n$ as its underlying algebra and the $\mathbb{Z}_2$-grading is given by $C_n(i)_0=\spn\{1,X_1, \ldots, X_{i-1}\}, \linebreak C_n(i)_1=\spn\{X_i, \ldots,X_{n-1}\}$. It is clear that algebra $C_n$ and the superalgebras $C_n(i)$ for $i=1,\ldots,n$ all have dimension $n$.

The following lemma shows that each superalgebra structure on $C_n$ is isomorphic to one of the $C_n(i)$.

\begin{lemma}\lelabel{2.15}
Consider the algebra $C_n$. There are $n$ distinct isomorphism classes of superalgebras on this algebra, which are $C_n(1), \ldots, C_n(n)$.
\end{lemma}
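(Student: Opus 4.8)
The plan is to reduce the classification of superalgebra structures on $C_n$ to the classification of linear involutions on its radical, up to change of basis, using crucially that the radical squares to zero.

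First I would identify the Jacobson radical $J=\spn\{X_1,\ldots,X_{n-1}\}$, which consists exactly of the non-units of $C_n$ and satisfies $J^2=0$. Since the radical is characteristic, every algebra automorphism $\phi$ of $C_n$ fixes $1$ and restricts to a linear automorphism of $J$; conversely, because $J^2=0$, any linear map $\tau\colon J\to J$ extends to an algebra endomorphism $\hat\tau$ of $C_n$ by $\hat\tau(1)=1$, $\hat\tau|_J=\tau$. The verification is a one-line computation: writing elements as $\alpha 1+a'$ with $a'\in J$, the product of two such elements lies in $k1\oplus J$ with $J$-component bilinear in the inputs and with the $J^2$-term vanishing, so $\hat\tau$ is multiplicative. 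This gives identifications $\Aut(C_n)\cong\GL(J)\cong\GL_{n-1}(k)$, and likewise the algebra involutions of $C_n$ correspond exactly to the linear involutions $\tau$ of $J$ (those with $\tau^2=\id$).

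Second I would translate superalgebra isomorphism into conjugacy. A superalgebra structure on $C_n$ is the same as an algebra involution $\sigma$ with restriction $\tau:=\sigma|_J$, and two such structures $\sigma,\sigma'$ give isomorphic superalgebras precisely when some $\phi\in\Aut(C_n)$ satisfies $\sigma'=\phi\sigma\phi^{-1}$, i.e.\ when the associated involutions $\tau,\tau'$ are conjugate in $\GL(J)$. Since $\ch(k)\neq 2$, each $\tau$ is diagonalizable with eigenvalues in $\{+1,-1\}$, so its conjugacy class is determined by the single invariant $\dim\Ker(\tau-\id)$, which ranges over $\{0,1,\ldots,n-1\}$. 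Hence there are exactly $n$ conjugacy classes, i.e.\ exactly $n$ isomorphism classes of superalgebra structures on $C_n$.

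Finally I would match these $n$ classes with the $C_n(i)$. For $C_n(i)$ the grading involution fixes $\spn\{1,X_1,\ldots,X_{i-1}\}$ and negates $\spn\{X_i,\ldots,X_{n-1}\}$, so its restriction to $J$ has $(+1)$-eigenspace of dimension $i-1$; as $i$ runs over $1,\ldots,n$ this invariant takes each value in $\{0,\ldots,n-1\}$ exactly once. Thus the $C_n(i)$ represent the $n$ distinct classes, and they are pairwise non-isomorphic already because $\dim C_n(i)_0=i$ is a superalgebra isomorphism invariant. The main obstacle — really the only non-formal point — is the extension claim that every linear endomorphism of $J$ lifts to an algebra endomorphism of $C_n$, which is what makes $\Aut(C_n)$ the full group $\GL(J)$ and is entirely driven by $J^2=0$; everything else is the standard classification of involutions over a field of characteristic $\neq 2$ by eigenspace dimensions, plus the bookkeeping lining up $i-1$ with the eigenvalue-$1$ multiplicity.
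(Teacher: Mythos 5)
Your proof is correct, and it takes a genuinely different route from the paper's. The paper argues directly with a homogeneous basis: it shows that the odd basis vectors of an arbitrary superalgebra structure $B$ on $C_n$ can have no component along $1$ (via the condition $u_j^2\in B_0$ and linear independence), adjusts the even basis vectors to lie in the radical, and then writes down an explicit superalgebra isomorphism onto $C_n(i+1)$. You instead package the whole problem as linear algebra: since $J^2=0$, the restriction map $\Aut(C_n)\to\GL(J)\cong\GL_{n-1}(k)$ is an isomorphism, so superalgebra structures on $C_n$ correspond to involutions in $\GL_{n-1}(k)$ and isomorphism classes to their conjugacy classes, which for $\ch(k)\neq 2$ are counted by $\dim\Ker(\tau-\id)\in\{0,\ldots,n-1\}$. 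Your version is more conceptual and yields the count $n$ immediately from eigenvalue multiplicities; the paper's hands-on computation has the minor advantage of exhibiting the explicit normalizing isomorphism onto the representative $C_n(i+1)$, which is the form in which the lemma is used later (e.g.\ in \prref{2.16}). Note also that where the paper uses $u_j^2\in B_0$ to force the odd part into the radical, you get the same fact for free from the characteristicity of $J$: the $-1$-eigenspace of $\sigma=\id_{k1}\oplus\tau$ is automatically contained in $J$. Both arguments rely essentially on $\ch(k)\neq 2$ (you for diagonalizability of $\tau$, the paper for the factor of $2$ appearing in $u_j^2$).
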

\begin{proof}
Let $B=B_0\oplus B_1$ be a superalgebra structure on $C_n$ where $\dim B_0=i+1$ with $0 \leq i \leq n-1$ (so $\dim B_1=n-i-1$). Suppose $B_0$ has basis $\{1,u_1,\ldots,u_i\}$ and $B_1$ has basis $\{u_{i+1},\dots,u_{n-1}\}$. There must be scalars such that for $1\leq j \leq n-1$, $u_j=\a_{j1}1+\a_{j2}X_1+\ldots+\a_{jn}X_{n-1}$.

Now let $u'_j=u_j-\a_{j1}1=\a_{j2}X_1+\ldots+\a_{jn}X_{n-1}$. Then $\{1,u'_1,\dots,u'_i\}$ is also a basis for $B_0$.

If $\a_{j1} \neq 0$ for any $i+1 \leq j \leq n-1$ then $u_j=\a_{j1}1+\sum_{i=1}^{n-1}\a_{ji+1}X_i$, so $u_j^2=\a_{j1}^21+2\sum_{i=1}^{n-1}\a_{ji+1}X_i$. Since $u_j^2 \in B_0$ we must have $\sum_{i=1}^{n-1}\a_{ji+1}X_i \in B_0$, say $\sum_{i=1}^{n-1}\a_{ji+1}X_i=\b_11+\sum^i_{k=1}\b_{k+1}u_k$ then $(\b_1+\a_{j1})1+\sum_{k=1}^i\b_{k+1}u_k-u_j=0$, which contradicts the linear independence of the basis. So $\a_{j1}=0$ for all $i+1\leq j \leq n-1$.

It is easy to check that any two of $u'_1, \ldots, u'_i, u_{i+1}, \ldots, u_{n-1}$ have product zero (including a product involving two of the same terms). So we can define a map $\p:B \rightarrow C_n(i+1)$ by $1 \mapsto 1, u'_1 \mapsto X_1, \ldots, u'_i \mapsto X_i, u_{i+1} \mapsto X_{i+1}, \ldots, u_{n-1} \mapsto X_{n-1}$. It is easy to see that this is a bijection, which preserves the algebra structure and $\mathbb{Z}_2$-grading, hence is an isomorphism of superalgebras. Thus a superalgebra structure on $C_n$ must be isomorphic to one of those described in the lemma.

To conclude the proof, we note that the $n$ superalgebra structures given in the lemma are clearly mutually non-isomorphic.
\end{proof}

\bigskip

So for each $i$ there is a unique (up to isomorphism) superalgebra structure $A$ on $k[X_1,\ldots,X_{n-1}]/(X_1,\ldots,X_{n-1})^2$ which has $\dim A_0=i$.

In the case of $n$-dimensional algebras, Gabriel showed that the closed orbit consists of algebras isomorphic to $C_n$. The closed orbits in $\Salg_n$ consist of superalgebras isomorphic to one of the superalgebras $C_n(i)$, as the following Proposition shows.

\begin{proposition}\prlabel{2.16}
There are $n$ closed orbits in $\Salg_n$. They are all disjoint, $C_n(i)$ being the closed orbit in $\Salg^i_n$.
\end{proposition}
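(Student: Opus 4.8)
The plan is to prove two things: first, that each $C_n(i)$ is a closed orbit lying in $\Salg^i_n$, and second, that these are the \emph{only} closed orbits. For the first part I would exploit the earlier observation (\coref{2.9}) that the forgetful map $U$ commutes with orbit closures: $U(\overline{G_n \cdot A}) = \overline{G_n \cdot U(A)}$. Since Gabriel showed that the orbit of $C_n$ is closed in $\Alg_n$, and $U(C_n(i)) = C_n$, we have $U(\overline{G_n \cdot C_n(i)}) = \overline{G_n \cdot C_n} = G_n \cdot C_n$. So every superalgebra in the boundary $\overline{G_n \cdot C_n(i)} \setminus (G_n \cdot C_n(i))$ must have underlying algebra isomorphic to $C_n$. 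But by the assumption in force, $\Salg^i_n$ is closed, hence $\overline{G_n \cdot C_n(i)} \subseteq \Salg^i_n$, so every such boundary point is a superalgebra structure on $C_n$ with even part of dimension $i$. By \leref{2.15} there is exactly one such isomorphism class, namely $C_n(i)$ itself, forcing the boundary to be empty and $G_n \cdot C_n(i)$ to be closed.

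For the converse, I would argue that any closed orbit $G_n \cdot A$ must be the orbit of some $C_n(i)$. Let $A$ have $\dim A_0 = i$, so $G_n \cdot A \subseteq \Salg^i_n$, and again apply $U$. Because $U$ is a continuous closed map (\prref{2.8}) and $G_n$-equivariant, the image $U(G_n \cdot A) = G_n \cdot U(A)$ is a closed orbit in $\Alg_n$ whenever $G_n \cdot A$ is closed; by Gabriel's result the only closed orbit in $\Alg_n$ is that of $C_n$, so $U(A) \cong C_n$, i.e. the underlying algebra of $A$ is $C_n$. Then \leref{2.15} identifies $A$ with $C_n(i)$. The disjointness of the $n$ orbits is immediate since the $C_n(i)$ lie in the pairwise disjoint strata $\Salg^i_n$, and within a fixed stratum they are non-isomorphic by \leref{2.15}.

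The step I expect to require the most care is verifying that $U$ actually carries closed orbits to closed orbits, which is what forces $U(A) \cong C_n$. The subtlety is that a continuous closed map sends the closed set $G_n \cdot A$ to a closed set, and $G_n$-equivariance identifies that image as the single orbit $G_n \cdot U(A)$; I would want to spell out that $U(G_n \cdot A) = G_n \cdot U(A)$ is genuinely closed in $\Alg_n$ and is a single orbit, so that Gabriel's uniqueness of the closed orbit applies. A secondary point worth confirming is that the count is exactly $n$: each stratum $\Salg^i_n$ for $i = 1, \ldots, n$ contributes precisely one closed orbit $C_n(i)$, and no stratum is empty, giving $n$ closed orbits in total. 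Everything else reduces to the algebraic classification already carried out in \leref{2.15} and the equivariance properties established before the statement.
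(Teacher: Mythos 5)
Your proposal is correct and follows essentially the same route as the paper: both directions rest on Gabriel's identification of $G_n \cdot C_n$ as the unique closed orbit in $\Alg_n$, the equivariance and closedness properties of $U$ (\prref{2.8}, \coref{2.9}), the standing assumption that the $\Salg^i_n$ are closed, and the uniqueness statement of \leref{2.15}. The only cosmetic difference is that you establish closedness of $G_n \cdot C_n(i)$ by showing its boundary is empty, whereas the paper exhibits it directly as the closed set $U^{-1}(G_n \cdot C_n) \cap \Salg^i_n$; these are the same argument in different packaging.
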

\begin{proof}
Suppose $G_n \cdot A$ is a closed orbit, i.e. $\overline{G_n \cdot A}=G_n \cdot A$. As $U(A)$ is an $n$-dimensional algebra, $G_n \cdot U(A)$ is an orbit in $\Alg_n$. Now by \coref{2.9} $\overline{G_n \cdot U(A)}=U(\overline{G_n\cdot A})=U(G_n \cdot A)=G_n \cdot U(A)$. Thus the orbit $G_n \cdot U(A)$ is closed in $\Alg_n$ but then, by the results of \cite{Gab}, $U(A)$ must be isomorphic to $C_n$. That is, $A$ must be isomorphic to a superalgebra structure on $C_n$.

It remains to show that the orbits, $G_n \cdot C_n(i)$, corresponding to the isomorphism classes of the superalgebras $C_n(i)$ are, in fact, closed. Notice that $C_n=U(C_n(i))$ is the algebra structure whose isomorphism class corresponds to the closed orbit in $\Alg_n$. That is, the orbit $G_n \cdot C_n$ is closed in $\Alg_n$ and thus $U^{-1}(G_n \cdot C_n)$ is closed in $\Salg_n$. Now, by assumption, $\Salg^i_n$ are closed disjoint subsets, thus $U^{-1}(G_n \cdot C_n) \cap \Salg^i_n$ is closed. However this set is the orbit $G_n \cdot C_n(i)$ (since \leref{2.15} above showed that all superalgebra structures on algebra $C_n$ with the degree zero component having dimension $i$ are all isomorphic). The result follows.
\end{proof}

\begin{lemma}\lelabel{2.18}
Suppose that $\Salg^i_n$ are closed subsets. Let $A$ be a superalgebra with $\dim A_0=i$. Assume that there is only one isomorphism class of superalgebras on $U(A)$ which has $\dim_0=i$. If the orbit $G_n \cdot U(A)$ is open in $\Alg_n$ then the orbit $G_n \cdot A$ is open in $\Salg_n$.
\end{lemma}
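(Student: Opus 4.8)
The plan is to realise the orbit $G_n \cdot A$ as an intersection of two open subsets of $\Salg_n$. First I would record a consequence of the standing assumption: since the subsets $\Salg^1_n, \ldots, \Salg^n_n$ are closed, pairwise disjoint, and satisfy $\Salg_n = \bigcup_{j=1}^n \Salg^j_n$, each $\Salg^i_n$ equals the complement $\Salg_n \setminus \bigcup_{j \neq i}\Salg^j_n$ of a finite union of closed sets, hence is open as well. Thus under our assumption every $\Salg^i_n$ is in fact clopen in $\Salg_n$.

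Next, using that $U$ is continuous (\prref{2.8}) and that $G_n \cdot U(A)$ is open in $\Alg_n$ by hypothesis, the preimage $U^{-1}(G_n \cdot U(A))$ is open in $\Salg_n$. Intersecting with the open set $\Salg^i_n$, the set $W := U^{-1}(G_n \cdot U(A)) \cap \Salg^i_n$ is open in $\Salg_n$. The crux of the proof is then the claim that $W = G_n \cdot A$; once this is established the lemma follows at once.

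To prove $G_n \cdot A \subseteq W$, note that $A$ has $\dim A_0 = i$, so $A \in \Salg^i_n$, and $U(A) \in G_n \cdot U(A)$, so $A \in U^{-1}(G_n \cdot U(A))$; both defining conditions of $W$ are invariant under the $G_n$-action (the first because $\dim B_0$ is constant on $\Salg^i_n$, the second by the $G_n$-equivariance of $U$), so the whole orbit lies in $W$. For the reverse inclusion, take $B \in W$. Then $\dim B_0 = i$ and $U(B)$ lies in the orbit $G_n \cdot U(A)$, i.e. $U(B) \cong U(A)$ as algebras. Choosing an algebra isomorphism $\varphi : U(B) \to U(A)$ and transporting the $\mathbb{Z}_2$-grading of $B$ along $\varphi$ produces a superalgebra structure on the algebra $U(A)$ that is isomorphic to $B$ and whose degree-zero component still has dimension $i$. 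By the hypothesis that $U(A)$ carries a unique isomorphism class of superalgebra structures with $\dim_0 = i$, this structure is isomorphic to $A$, whence $B \cong A$ as superalgebras, i.e. $B \in G_n \cdot A$. This yields $W = G_n \cdot A$, so $G_n \cdot A$ is open.

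The step I expect to require the most care is the reverse inclusion $W \subseteq G_n \cdot A$: the hypotheses only provide an isomorphism of underlying algebras $U(B) \cong U(A)$, and one must carefully transport the grading of $B$ across this isomorphism so as to land among the superalgebra structures on $U(A)$ before the uniqueness hypothesis can be invoked. The clopen observation in the first step, although elementary, is also essential — without it, intersecting with $\Salg^i_n$ would not preserve openness, and the argument would collapse.
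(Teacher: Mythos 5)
Your proposal is correct and follows essentially the same route as the paper's proof: both deduce that $\Salg^i_n$ is open from the disjoint closed cover, use continuity of $U$ to get openness of $U^{-1}(G_n \cdot U(A))$, and identify $G_n \cdot A$ with the intersection $U^{-1}(G_n \cdot U(A)) \cap \Salg^i_n$. You merely spell out in more detail the two inclusions behind that identification, which the paper dispatches with ``by the assumptions made.''
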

\begin{proof}
Since $\Salg^i_n$ are all disjoint closed subsets by assumption, they are also each open. Now $U^{-1}(G_n \cdot U(A)))$ is the collection of superalgebra structures on $U(A)$. Since $G_n \cdot U(A)$ is open, so too must be $U^{-1}(G_n \cdot U(A))$, by the continuity of $U$. Now by the assumptions made $G_n \cdot A=U^{-1}(G_n \cdot U(A)) \cap \Salg^i_n$. Thus $G_n \cdot A$ is the intersection of two open sets, so it is open itself.
\end{proof}

\begin{example}\exlabel{2.19}
This is indeed the case for several orbits in $\Salg_4$. Using this result and the fact that the orbits of $(1)$ and $(10)$ are open in $\Alg_4$ we discover that the orbits $(1|0),(1|1),(1|2),(10|0)$ and $(10|1)$ are open in $\Salg_4$.
\end{example}

\section{Algebraic groups and their actions} \selabel{3.3}

Recall that an algebraic group $G$ is an algebraic variety which additionally has the structure of a group. That is, the multiplication $\mu:G \times G \rightarrow G$ given by $\mu(x,y)=xy$ and inversion $\i:G \rightarrow G$ given by $\i(x)=x^{-1}$ are morphisms of varieties. An algebraic group is said to be connected if it is irreducible as a variety. The algebra group $G_n$ and $GL_n$ are connected with dimensions $n^2-n$ and $n^2$ respectively.

The following result is well-known, for example see \cite{CB}.

\begin{lemma}\lelabel{3.6}
Let $G$ be a connected algebraic group acting on a variety $X$, then:
\begin{itemize}
\item[(a)] Each orbit $G \cdot x$ is locally closed (i.e. $G \cdot x$ is open in $\overline{G \cdot x}$) and irreducible\\
\item[(b)] $\dim G \cdot x = \dim G - \dim \Stab_G(x)$\\
\item[(c)] $\overline{G \cdot x} \bs G \cdot x$ is a union of orbits of dimension $ < \dim G \cdot x$\\
\end{itemize}
\end{lemma}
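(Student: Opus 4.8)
The plan is to study the \emph{orbit map} $\pi\colon G \to X$, $g \mapsto g\cdot x$, whose image is precisely $O := G\cdot x$, and to combine the homogeneity of $O$ under the $G$-action with two standard facts from algebraic geometry: Chevalley's theorem (the image of a morphism of varieties is constructible, hence contains a nonempty open subset of its closure) and the fibre-dimension theorem for dominant morphisms of irreducible varieties. The recurring observation I would use is that each $g \in G$ acts as an automorphism of $X$ carrying $O$ onto itself, and so restricts to a homeomorphism of the closure $\overline{O}$.

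For part (a), irreducibility is immediate, since $G$ is connected (hence irreducible as a variety) and $O = \pi(G)$ is the image of an irreducible variety under a morphism, so both $O$ and $\overline{O}$ are irreducible. For local closedness I would first invoke Chevalley to see that $O$ is constructible, and therefore contains a nonempty open subset $U$ of $\overline{O}$. Since $U \subseteq O$ and $O$ is a single orbit, every point of $O$ lies in some translate $g\cdot U$, giving $O = \bigcup_{g\in G} g\cdot U$; as each $g$ is a homeomorphism of $\overline{O}$, each $g\cdot U$ is open in $\overline{O}$, so $O$ is a union of open subsets of $\overline{O}$ and thus open in $\overline{O}$, i.e.\ locally closed. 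I expect this step to be the only genuinely substantive input: the subtlety to get right is that translation by $g$ preserves $\overline{O}$, so that the translates $g\cdot U$ are open in $\overline{O}$ rather than merely in $X$ — this is exactly where the $G$-invariance of both $O$ and its closure is used.

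For part (b) I would apply the fibre-dimension theorem to the dominant (indeed surjective) morphism $\pi\colon G \to O$ of irreducible varieties. Each fibre $\pi^{-1}(g\cdot x)$ equals the coset $g\,\Stab_G(x)$, which is isomorphic to $\Stab_G(x)$ and hence of constant dimension $\dim \Stab_G(x)$; the theorem then yields $\dim G = \dim(G\cdot x) + \dim \Stab_G(x)$, which rearranges to the claimed formula. For part (c), note that $\overline{O}\bs O$ is $G$-stable (both $O$ and $\overline{O}$ are), so it is automatically a union of orbits; and by part (a) the orbit $O$ is a nonempty open, hence dense, subset of the irreducible variety $\overline{O}$, so $\overline{O}\bs O$ is a \emph{proper} closed subset and therefore has dimension strictly below $\dim \overline{O} = \dim(G\cdot x)$. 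Every orbit contained in the boundary lies in this set, hence has strictly smaller dimension, as required. Parts (b) and (c) are thus formal consequences of standard dimension theory once the local closedness in (a) is in hand.
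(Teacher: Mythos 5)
The paper gives no proof of this lemma at all --- it is stated as a well-known result with a citation to \cite{CB} --- so there is nothing to compare against except the standard argument, and yours is exactly that standard argument, correctly executed: Chevalley plus homogeneity for local closedness, irreducibility of the image of the connected (hence irreducible) group, the fibre-dimension theorem applied to the orbit map whose fibres are stabilizer cosets for (b), and $G$-stability plus properness of the boundary in the irreducible closure for (c). Your proof is correct and is precisely the one the cited reference supplies.
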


Note that in the case of the $G_n$-action on $\Salg_n$, the stabiliser subgroup of a point $(\a^k_{ij},\g^j_i)$ of $\Salg_n$ is the automorphism group of the superalgebra given by the point $(\a^k_{ij},\g^j_i)$.

Whenever we have a connected algebraic group $G$ acting on a variety $X$, we have the idea of degeneration. The action of $G$ on $X$ partitions the variety into equivalence classes under the equivalence relation $x \equiv y \Leftrightarrow \exists \ g \in G$ such that $y=g \cdot x$. The equivalence classes are the $G$-orbits. Because of this, we shall use the notation $[x]=G\cdot x$ for brevity, while stating and proving results about this more general notion of degeneration.

\begin{definition}\delabel{3.8}
We say that $[x]$ {\bf degenerates} to $[y]$ if $y \in \overline{G \cdot x}$ and will write $[x] \rightarrow [y]$.
\end{definition}

It is not difficult to see that $[x]\rightarrow [y]$ if and only if $G \cdot y \subseteq \overline{G \cdot x}$. The latter provides a useful way to visualize the notion of degeneration --- that an orbit is contained in the closure of some other orbit.

By appealing to \leref{3.6} we can show that this idea of degeneration is not only well-defined on the $G$-orbits of $X$, but it also gives rise to a partial order on the $G$-orbits in $X$. We define $[y] \leq_{degr} [x]$ if and only if $[x]$ degenerates to $[y]$. (Note that in some places the degeneration partial order is defined to be the opposite to this. This happens for example in \cite{Zwa}).

Our main interest is in the degeneration of superalgebras and the degeneration partial order on the isomorphism classes of $n$-dimensional superalgebras.

For $n$-dimensional superalgebras $A$ and $B$, if $(\a^k_{ij},\g^j_i) \in G_n \cdot B$ and $(a^k_{ij},\g^j_i) \in \overline{G_n \cdot A}$, then  {\bf $A$ degenerates to $B$} and denote this by $A \rightarrow B$. In some places the terminology {\bf $A$ dominates $B$} is used instead of $A$ degenerates to $B$.  Clearly, whenever $(\a^k_{ij},\g^j_i) \in G_n \cdot A$, then we also have $(\a^k_{ij},\g^j_i) \in \overline{G_n \cdot A}$ since $G_n \cdot A \subseteq \overline{G_n \cdot A}$. A degeneration of this form is referred to as a {\bf trivial degeneration}, \index{degeneration!trivial} any degeneration which is not of this form is called a {\bf non-trivial degeneration}. \index{degeneration!non-trivial}

Intuitively, if the superalgebra $A$ degenerates to the superalgebra $B$ (where $B \ncong A$ that is, this is a proper degeneration) then we think of the orbit $G_n \cdot B$ as consisting of some of those points outside the orbit $G_n \cdot A$, but which are ``close to" some of the points in the orbit $G_n \cdot A$. This is supported by observing that the orbit $G_n \cdot B$ belongs to the boundary of $G_n \cdot A$ (i.e. the set $\overline{G_n \cdot A}\bs G_n \cdot A$) as we shall see in the next section. Another observation supporting this intuition is that some degenerations may be obtained by taking a sequence of points in the orbit $G_n \cdot A$ whose ``limit" lies in the orbit $G_n \cdot B$ (see \coref{4.3}).

It is well-known that
when $G$ is a connected algebraic group acting on a variety $X$, the irreducible components of $X$ are stable under the action of $G$. Thus we have the following.

\begin{corollary}\colabel{3.11}
When $G$ is a connected algebraic group acting on a variety, the irreducible components are closures of a single orbit or closures of an infinite family of orbits.
\end{corollary}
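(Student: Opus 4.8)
The plan is to derive the dichotomy directly from the $G$-stability of irreducible components (the well-known fact recalled immediately above the statement) together with the elementary observation that an irreducible topological space cannot be written as a finite union of proper closed subsets. I would fix an irreducible component $C$ of the variety $X$. By $G$-stability, $C$ is a union of $G$-orbits: for every $x \in C$ one has $G \cdot x \subseteq C$, so $C = \bigcup_{x \in C} G \cdot x$, where distinct orbits are disjoint. For completeness one can recall why $C$ is $G$-stable: since $G$ is connected, $G \times C$ is irreducible, hence its image $G \cdot C$ under the action morphism is irreducible, so $\overline{G \cdot C}$ is an irreducible subset containing $C$; maximality of the component forces $\overline{G \cdot C} = C$, and applying this to $g$ and to $g^{-1}$ gives $g \cdot C = C$ for every $g \in G$.

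Next I would split into two cases according to whether some orbit is dense in $C$. If there exists $x \in C$ with $\overline{G \cdot x} = C$, then $C$ is the closure of the single orbit $G \cdot x$ and we are in the first alternative. Otherwise no orbit contained in $C$ is dense, and I claim that $C$ must then contain infinitely many orbits. Indeed, suppose to the contrary that $C$ contained only finitely many orbits $O_1, \ldots, O_m$. Each closure $\overline{O_i}$ is closed in $X$ and contained in $C$ (as $C$ is closed and $O_i \subseteq C$), so $C = \bigcup_{i=1}^m O_i \subseteq \bigcup_{i=1}^m \overline{O_i} \subseteq C$, whence $C = \bigcup_{i=1}^m \overline{O_i}$. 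Since $C$ is irreducible, this finite union of closed sets forces $C = \overline{O_i}$ for some $i$, contradicting the assumption that no orbit is dense. Hence $C$ contains infinitely many orbits, and being the union of all of them it is the closure of this infinite family, giving the second alternative.

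The argument is short, so there is no serious obstacle; the only point requiring care is the bookkeeping in the second case, namely establishing that failure of the single-orbit alternative forces \emph{infinitely} many orbits rather than merely more than one. This is exactly where the irreducibility of $C$ and the finite-union principle enter, with \leref{3.6}(a) (each orbit is locally closed and irreducible) providing the background picture, although the bare finite-union argument already suffices. One should also note at the outset that $X$ has only finitely many irreducible components, so the statement is genuinely a statement about each individual $G$-stable piece $C$.
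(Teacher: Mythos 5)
Your argument is correct and follows essentially the same route as the paper: both rest on the $G$-stability of irreducible components and the fact that an irreducible closed set cannot be a finite union of proper closed subsets (the paper phrases this via $\overline{A \cup B}=\overline{A}\cup\overline{B}$). Your write-up merely adds the standard proof of $G$-stability and makes the case analysis more explicit, so there is nothing to correct.
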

\begin{proof}
We know that irreducible components are $G$-stable. We also know that components are closed, hence each component can be taken to be the closure of a union of orbits. If there are only finitely many orbits in the union, then by using $\overline{A \cup B}=\overline{A} \cup \overline{B}$ we see that the component is not irreducible unless it is the closure of a single orbit. This gives the required statement.
\end{proof}

\bigskip

In the case of the $G_n$ transport of structure action on $\Alg_n$ Flanigan goes further, and in \cite{Flan} proves a result describing algebraic properties of algebras belonging to some infinite family, whose orbits give rise to an irreducible component as described above.

In the following we shall abuse the terminology, and refer to the situation when some structure is contained in the closure of the union of the orbits of an infinite family of orbits, as a degeneration. We see an example of this in $\Alg_4$ in the results of Gabriel, where the structure $(19)$ is contained in the closure of the union of orbits of the family of structures $(18;\l)$. It is important to notice, however, that this is not a degeneration as defined earlier. Similarly, when an infinite family of orbits is contained in another infinite family of structures, we may also wish to refer to this as a degeneration too. We have an example of this given by Mazzola's work on $\Alg_5$ in \cite{Maz}, where the orbits of the infinite family of structures $(35;\l)$ is contained in the closure of the union of the orbits in the infinite family of structures $(13;\l)$. Finally, one may wish to refer to the case where an infinite family of structures is contained in the closure of a single orbit as a degeneration. This idea is less of an abuse of terminolgy than the others mentioned above, however, since we could consider it to be an infinite family of degenerations (in the original sense), one to each of the orbits in the infinite family. Although an abuse of terminology, it is useful to extend the notion of degeneration in this way, as it helps with determining the irreducible components.

\begin{corollary}\colabel{3.12}
When $G$ is a connected algebraic group acting on a variety $X$, we have the following statements regarding the notions of degeneration and irreducible components:

\begin{enumerate}[(a)]
\item{If $[x] \rightarrow [y]$ then $[y]$ belongs to all the irreducible components to which $[x]$ belongs (and possibly more too)}.
\item{If there is no degeneration to $[x]$, then its closure is an irreducible component}.
\item{If $\cup_{\l}[x(\l)]$ is irreducible and there is no degeneration to $\cup_{\l}[x(\l)]$ then its closure is an irreducible component}.
\end{enumerate}
\end{corollary}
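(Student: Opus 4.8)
The plan is to reduce all three statements to two facts already established: that each orbit $G\cdot x$ is irreducible (\leref{3.6}(a)), and that, because $G$ is connected, every irreducible component is $G$-stable and is the closure of a single orbit or of an infinite family of orbits (\coref{3.11}). Throughout I keep the notation $[x]=G\cdot x$ and use that $[x]\to[y]$ is equivalent to $G\cdot y\subseteq\overline{G\cdot x}$.

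For part (a) I would argue directly. Let $C$ be any irreducible component with $G\cdot x\subseteq C$. Since $C$ is closed we get $\overline{G\cdot x}\subseteq C$, and since $[x]\to[y]$ means $y\in\overline{G\cdot x}$ we conclude $y\in C$; as $C$ is $G$-stable this upgrades to $G\cdot y\subseteq C$. This holds for every component through $[x]$, which is exactly the assertion, and nothing prevents $[y]$ from lying in further components, accounting for the parenthetical remark.

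For part (b) the idea is that $G\cdot x$ is irreducible by \leref{3.6}(a), so $\overline{G\cdot x}$ is an irreducible closed set and hence lies in some irreducible component $C$. By \coref{3.11}, $C=\overline{G\cdot z}$ for a single orbit, or $C=\overline{\bigcup_\lambda G\cdot z(\lambda)}$ for an infinite family. In either case the inclusion $G\cdot x\subseteq C$ exhibits $[x]$ as a degeneration of $[z]$ (respectively of the family) in the sense discussed just above \coref{3.12}; the hypothesis that there is no degeneration to $[x]$ then forces $C=\overline{G\cdot x}$, so $\overline{G\cdot x}$ is itself a component. Part (c) is the verbatim argument with the single orbit $[x]$ replaced by the irreducible set $\bigcup_\lambda[x(\lambda)]$: its closure is irreducible and closed, hence sits in a component $C$, which \coref{3.11} again presents as the closure of a larger orbit or family, and the absence of a degeneration to $\bigcup_\lambda[x(\lambda)]$ forces equality.

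The delicate point, and the step I expect to need the most care, is making ``degeneration to $[x]$'' precise enough that the implication in (b) and (c) is airtight when $C$ is the closure of an \emph{infinite family}: one must rule out that $\overline{G\cdot x}$ sits \emph{properly} inside such a $C$ while $[x]$ merely happens to be one of the family members, a configuration admitting no single-orbit degeneration onto $[x]$. I would close this gap with the generalized notion of degeneration introduced before \coref{3.12}, supported by a dimension count: the orbits of maximal dimension form a dense open $G$-stable subset $W\subseteq C$, so $\overline{W}=C\ni x$, and if $\overline{G\cdot x}\subsetneq C$ then $G\cdot x$ has dimension strictly below $\dim C=\dim W$, so $W\smallsetminus G\cdot x$ is still dense in $C$ and its closure contains $x$. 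This is a genuine nontrivial (single-orbit or family) degeneration to $[x]$, contradicting the hypothesis; the identical count handles (c).
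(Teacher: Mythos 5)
Your proposal is correct and follows essentially the same route as the paper: part (a) by the closedness of components applied to $\overline{G\cdot x}$, and parts (b) and (c) by contradiction, embedding the irreducible set $\overline{G\cdot x}$ (resp.\ $\overline{\cup_\l G\cdot x(\l)}$) into a component which, by \coref{3.11}, is the closure of an orbit or of an infinite family, thereby producing a forbidden degeneration. Your additional dimension-count to make the infinite-family case airtight is a sound refinement that the paper omits, since the paper simply treats containment in the closure of a family as a degeneration under its extended terminology.
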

\begin{proof}
For part (a) $G \cdot y \subseteq \overline{G \cdot x}$, so that any irreducible component containing $G \cdot x$ must also contain $G \cdot y$.

For parts (b) and (c), consider what happens if $\overline{G \cdot x}$ (respectively \linebreak $\overline{\cup_{\l} G \cdot x(\l)}$) is not an irreducible component. Then, as an irreducible set, it must be contained in some irreducible component implying that $[x]$ (respectively $\cup_{\l} [x(\l)]$) is contained in the closure of an orbit, or in the closure of the union of an infinite family of  orbits. This means that there is a degeneration to $[x]$ (respectively $\cup_{\l} [x(\l)]$), contrary to our assumption.
\end{proof}

\begin{remark}\relabel{3.13}
The above to wonder when a union of a family of orbits is irreducible, so that we may apply part (c) of the above. This might not be true for arbitrary actions of algebraic groups on a variety. However the infinite families which arise in $\Alg_4$ and $\Alg_5$ can be shown to be irreducible. We illustrate this idea using the superalgebras $(18;\l|i)$. Firstly fix $i$ as either $0$, $1$ or $2$. Use the basis $e_1=1,e_2=X,e_3=Y,e_4=XY$ of $(18;\l|i)$. Then for the member of the family with parameter value $\l \neq -1$ we have that the structure constant, $\a^4_{23}=\l$. Hence, using this basis, we obtain a set of points in $\Salg_4$. Call this set $S$ --- one point from each orbit corresponding to a member of the family $(18;\l|i)$. This set of points can be identified with $k\bs \{-1\}$ which is irreducible in $\A^1$ (being the distinguished open $D(x+1)$ of $\A^1$), thus the set of points, $S$, is also irreducible. Now denote by $\p:G_n \times \Salg_n \rightarrow \Salg_n$ the morphism arising from the transport of structure action of $G_n$ on $\Salg_n$. The union of the orbits of $(18;\l|i)$ is given by $\p(G_n \times S)$, which, exactly as remarked prceeding to  \coref{3.12}, is seen to be irreducible. So we have shown that the union of orbits of superalgebras $(18;\l|i)$ for $i=0,1,2$ are irreducible. The infinite families in $\Alg_5$ can be shown to be irreducible in a similar manner.
\end{remark}

\coref{3.12} tells us that the irreducible components are the orbits or infinite families of orbits, which no other orbit or infinite family of orbits degenerates to. So if one knows all degenerations between orbits and infinite families of orbits, then it is a trivial matter to determine the irreducible components. Unfortunately, the problem of determining all these degenerations is usually difficult. The problem of determining the irreducible components is somewhat easier, but can still be difficult too.

\begin{definition}\delabel{3.14} \index{superalgebra!generic}
An $n$-dimensional superalgebra $A$ (respectively, a family of superalgebras $A(\l)$) is called {\bf generic}, if the closure of its orbit in $\Salg_n$ --- $\overline{G_n \cdot A}$ (respectively, the closure of the union of the family of orbits --- $\overline{\bigcup_\l G_n \cdot A(\l)}$), is an irreducible component of $\Salg_n$.
\end{definition}

\begin{remark}\relabel{3.15}
A superalgebra $A$, whose orbit is open, is always generic. Since it must lie in some irreducible component (being an irreducible set by part (a) of \leref{3.6}) and, as an open subset of any irreducible set is dense, we must have that $\overline{G_n \cdot A}$ is the entire component.

However the observations in \coref{3.12} applies more generally and can also aid us in finding the irreducible components. For example, after finding that no algebras degenerate to $(17)$ in $\Alg_4$, by applying the closed continuous map $U$, we discover that no superalgebras can degenerate to any of $(17|i)$ for $i=0,1,2$ in $\Salg_4$. Then, by using the observations given in \coref{3.12}, we see that $(17|i)$ for $i=0,1,2$ give rise to irreducible components of $\Salg_4$, hence these algebras are also generic.
\end{remark}

The next two lemmas of this section are concerned with calculating the dimensions of the orbits in $\Salg_n$. We explain how to read these tables now. Each row corresponds to a different algebra structure and the columns of the table are for different $\mathbb{Z}_2$-gradings on that given underlying algebra structure. Thus the underlying algebra structure of the superalgebra determines which row you look in, and which particular $\mathbb{Z}_2$-grading is used to obtain the given superalgebra structure determines which column you look under. We illustrate this by using an example. To find the dimension of the stabilizer of a point in the orbit of $(3|2)$ we look in the row labelled $(3| \cdot)$ and then look under the column labelled $2$ to see that the dimension of the required stabilizer is $2$.

\begin{lemma}\lelabel{3.16}
The following gives the dimensions of the stabilizers of points in the orbits in $\Salg_4$:
\begin{center}
\begin{tabular}[t]{|*5{c|}}\hline
\multicolumn{5}{|c|}{Stabilizer dimensions}\\\hline
$\cdot$ & 0 & 1 & 2 & 3\\ \hline
$(1|\cdot)$ & 0 & 0 & 0 &\\ \hline
$(2|\cdot)$ & 1 & 1 & 1 & 1\\ \hline
$(3|\cdot)$ & 2 & 2 & 2 & 1\\ \hline
$(4|\cdot)$ & 2 & 1 & & \\ \hline
$(5|\cdot)$ & 3 & 2 & & \\ \hline
$(6|\cdot)$ & 4 & 2 & 4 & \\ \hline
$(7|\cdot)$ & 4 & 2 & 3 & 2\\ \hline
$(8|\cdot)$ & 5 & 3 & 3 & 3\\ \hline
$(9|\cdot)$ & 9 & 5 & 5 & 9\\ \hline
$(10|\cdot)$ & 3 & 1 & & \\ \hline
$(11|\cdot)$ & 4 & 3 & 2 & 2\\ \hline
$(12|\cdot)$ & 6 & 3 & 4 & \\ \hline
$(13|\cdot)$ & 2 & 1 & & \\ \hline
$(14|\cdot)$ & 3 & 3 & 2 & 2\\ \hline
$(15|\cdot)$ & 3 & 3 & 2 & 2\\ \hline
$(16|\cdot)$ & 4 & 3 & 3 & 2\\ \hline
$(17|\cdot)$ & 6 & 3 & 4 & \\ \hline
$(18;\l|\cdot)$ & 4 & 3 & 2 & \\ \hline
$(19|\cdot)$ & 4 & 2 & & \\ \hline
\end{tabular}
\end{center}
\end{lemma}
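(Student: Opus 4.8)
The plan is to compute each stabilizer dimension via the orbit-dimension formula from \leref{3.6}(b), namely $\dim \Stab_{G_n}(A) = \dim G_n - \dim G_n \cdot A$, where $\dim G_4 = 4^2 - 4 = 12$. However, directly computing orbit dimensions is awkward, so instead I would compute each stabilizer dimension intrinsically: since the stabilizer of a point $(\a^k_{ij},\g^j_i) \in \Salg_4$ under the transport-of-structure action is exactly the automorphism group of the corresponding superalgebra (as noted after \leref{3.6}), the required number is $\dim \Aut(A)$ for each superalgebra $A$ in the list. A superalgebra automorphism is an algebra automorphism $\p$ of the underlying algebra $U(A)$ that additionally preserves the $\mathbb{Z}_2$-grading, i.e. $\p(A_0) = A_0$ and $\p(A_1) = A_1$, equivalently $\p \s = \s \p$. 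So the strategy reduces to: for each underlying algebra $(j)$, determine the graded automorphism group of each of its gradings $(j|i)$ and read off its dimension.

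The concrete computation I would carry out, row by row, is as follows. For a fixed underlying algebra $(j)$ I first describe $\Aut(U(A))$ as an algebraic group (most of these are solvable groups built from tori, unipotent parts coming from the radical, and occasionally a reductive factor such as $\GL_2$ for $(10)=M_2$ or $\GL_3$-type pieces for the larger matrix algebras). Then for each grading $(j|i)$, given by the involution $\s$, I impose the commutation condition $\p\s=\s\p$. Writing $\p$ as a matrix in the homogeneous basis given explicitly in Theorems \thref{2.1}, \ref{th:4.1} and the intervening classification, the condition $\p\s = \s\p$ forces $\p$ to be block-diagonal with respect to the decomposition $A = A_0 \oplus A_1$; combined with the multiplicative constraints defining an algebra automorphism, this cuts the automorphism variety down to a subgroup whose dimension I can count directly from the number of free parameters. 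I would organize the work so that for each row I reuse the description of $\Aut(U(A))$ and only re-impose grading-compatibility as $i$ varies across the columns.

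As a sanity check built into the method, the trivially graded superalgebra $(j|0)$ — where $A_0 = U(A)$ and $A_1 = 0$, so $\s = \id$ and the commutation condition is vacuous — must have $\dim\Stab = \dim \Aut(U(A))$, which recovers Gabriel's orbit-dimension data for $\Alg_4$ via $\dim\Aut(U(A)) = 12 - \dim G_4 \cdot U(A)$ after accounting for the fact that $\Aut$ is computed inside $G_4$ rather than the full $\GL_4$. I would use the known orbit dimensions in $\Alg_4$ from \cite{Gab} to cross-verify every entry in the $i=0$ column, and verify internal consistency by checking that distinct gradings on the same algebra give stabilizer dimensions summing correctly with the grading degrees of freedom. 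Symmetry observations also cut the labor: for instance $(9|0)$ and $(9|3)$ should both give $9$ since $C_4$ is commutative local with a very large automorphism group and the two extreme gradings are interchanged by a symmetry, while $(9|1)$ and $(9|2)$ give $5$.

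The main obstacle will be the careful parametrization of the graded automorphism group for the algebras with the richest automorphism groups — in particular $(9)=k[X,Y,Z]/(X,Y,Z)^2$, $(12)=\wedge k^2$, $(17)$, and $M_2$ — where $\Aut(U(A))$ has a large reductive or general-linear part and one must verify precisely which part survives the constraint $\p\s=\s\p$; an off-by-one error in counting the surviving parameters is the most likely source of mistakes. I expect the bulk of the routine cases (the commutative and lower-triangular matrix algebras) to follow quickly from a uniform "block-diagonalize then count free entries subject to the algebra relations" recipe, so the proof will consist largely of tabulating these parameter counts and flagging only the few delicate reductive cases for explicit verification.
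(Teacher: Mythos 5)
Your approach is essentially the paper's: its proof consists precisely of the observation that $\Stab_{G_4}((\a^k_{ij},\g^j_i))\cong\Aut(A)$, the graded automorphism group of the superalgebra, and the table is then obtained by exactly the case-by-case parameter counts you describe (the paper only records the one non-obvious reductive input, $\dim\mathrm{PGL}_2(k)=3$, for the $M_2$ case). One caveat on your verification plan: the proposed cross-check of the $i=0$ column against Gabriel's orbit dimensions will not come out clean, since \reref{3.18} notes that Gabriel's published orbit dimensions are systematically one less than what this method yields, a discrepancy the authors themselves cannot explain.
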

\begin{proof}
If the point $(\a^k_{ij},\g^j_i)$ is in the orbit, $G_4 \cdot A$, which is identified with the isomorphism class of superalgebra $A$, then $\Stab_{G_4}((\a^k_{ij},\g^j_i)) \cong \Aut(A)$ where $\Aut(A)$ is the group of automorphisms of the superalgebra $A$ as mentioned in the paragraph below \leref{3.6}.

We remark that $\dim \mathrm{PGL}_n(k)=n^2-1$, so that $\dim \mathrm{PGL}_2(k)=2^2-1=3$ (see for example \cite{Har})
\end{proof}

\begin{proposition}\prlabel{3.17}
The following gives the dimensions of the orbits in $\Salg_4$:
\begin{center}
\begin{tabular}[t]{|*5{c|}}\hline
\multicolumn{5}{|c|}{Orbit dimensions}\\\hline
$\cdot$ & 0 & 1 & 2 & 3\\ \hline
$(1|\cdot)$ & 12 & 12 & 12 &\\ \hline
$(2|\cdot)$ & 11 & 11 & 11 & 11\\ \hline
$(3|\cdot)$ & 10 & 10 & 10 & 11\\ \hline
$(4|\cdot)$ & 10 & 11 & & \\ \hline
$(5|\cdot)$ & 9 & 10 & & \\ \hline
$(6|\cdot)$ & 8 & 10 & 8 & \\ \hline
$(7|\cdot)$ & 8 & 10 & 9 & 10\\ \hline
$(8|\cdot)$ & 7 & 9 & 9 & 9\\ \hline
$(9|\cdot)$ & 3 & 7 & 7 & 3\\ \hline
$(10|\cdot)$ & 9 & 11 & & \\ \hline
$(11|\cdot)$ & 8 & 9 & 10 & 10\\ \hline
$(12|\cdot)$ & 6 & 9 & 8 & \\ \hline
$(13|\cdot)$ & 10 & 11 & & \\ \hline
$(14|\cdot)$ & 9 & 9 & 10 & 10\\ \hline
$(15|\cdot)$ & 9 & 9 & 10 & 10\\ \hline
$(16|\cdot)$ & 8 & 9 & 9 & 10\\ \hline
$(17|\cdot)$ & 6 & 9 & 8 & \\ \hline
$(18;\l|\cdot)$ & 8 & 9 & 10 & \\ \hline
$(19|\cdot)$ & 8 & 10 & & \\ \hline
\end{tabular}
\end{center}
\end{proposition}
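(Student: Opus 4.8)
\textbf{Proof plan for Proposition~\ref{pr:3.17}.}
The plan is to combine \leref{3.16} with part (b) of \leref{3.6}. The latter tells us that for any point $x$ in $\Salg_4$,
\begin{equation*}
\dim G_4 \cdot x = \dim G_4 - \dim \Stab_{G_4}(x).
\end{equation*}
Since $G_4$ has dimension $n^2-n = 16-4 = 12$, as recorded earlier when $G_n$ was introduced, the orbit dimension of every superalgebra is obtained simply by subtracting its stabilizer dimension from $12$. Thus the entire content of the proposition follows from the assertion that the table in \prref{3.17} is obtained entry-by-entry from the table in \leref{3.16} by the rule (orbit dimension) $= 12 - $ (stabilizer dimension). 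One should verify this arithmetic cell by cell: for instance, the $(3|2)$ entry has stabilizer dimension $2$, giving orbit dimension $12-2=10$, which matches; and the $(9|0)$ entry has stabilizer dimension $9$, giving orbit dimension $12-9=3$, again matching.

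First I would state the dimension formula explicitly and note that $\dim G_4 = 12$. Then I would invoke the identification, already observed in the paragraph following \leref{3.6}, that the stabilizer $\Stab_{G_4}((\a^k_{ij},\g^j_i))$ is isomorphic to the automorphism group $\Aut(A)$ of the corresponding superalgebra, so that the stabilizer dimensions recorded in \leref{3.16} are precisely the dimensions of these automorphism groups. The proof then reduces to the purely mechanical step of subtracting each stabilizer dimension from $12$ and checking that the result agrees with the claimed orbit-dimension table. Since both tables are indexed identically (row by underlying algebra, column by dimension of the even part), this is a direct one-to-one comparison.

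The genuinely nontrivial work has already been discharged in \leref{3.16}, namely computing the dimension of $\Aut(A)$ for each of the superalgebras in the classification. Given that lemma, \prref{3.17} is essentially a corollary, and the only thing to guard against is transcription or arithmetic slips in converting the full list of stabilizer dimensions to orbit dimensions. Hence the main obstacle, if any, is not conceptual but bookkeeping: one must be careful that the row/column layout of the two tables coincides and that no entry is misaligned, so that the subtraction $12 - (\text{stabilizer})$ is applied consistently across all listed families, including the one-parameter family $(18;\l|\cdot)$ whose stabilizer dimension is constant in $\l$ (giving a constant orbit dimension along the family, as needed for the later irreducibility arguments of \reref{3.13}).
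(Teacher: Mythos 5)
Your proposal is correct and follows exactly the paper's own argument: invoke part (b) of \leref{3.6} together with $\dim G_4 = 12$ and subtract the stabilizer dimensions computed in \leref{3.16}. Nothing further is needed.
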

\begin{proof}
We have calculated the dimensions of the automorphism groups, or equivalently, the dimensions of stabilizers of any point in each orbit in \linebreak \leref{3.16} above. We know that the dimension of $G_4$ is $12$. By using part (b) of \leref{3.6}, we can calculate the dimension of the orbit $G_4 \cdot (\a^k_{ij},\g^j_i)$ by subtracting the dimension of the stabilizer, $\Stab_{G_4}((\a^k_{ij},\g^j_i))$, from the dimension of $G_4$ which is $12$.
\end{proof}

\begin{remark}\relabel{3.18}
We remark that to calculate the dimensions of the orbits in the case where we don't require the identity to be fixed (i.e. the orbits in $\Salg'_4$ and in which case $\GL_4$ acts on this variety) we can subtract the dimensions of the stabilizers found in \leref{3.16} from $16$. If we then compare the dimensions of the orbits of the trivially $\mathbb{Z}_2$-graded superalgebras $(i|0)$ for $i=1, \ldots, 18;\l,19$, thus calculated, with those given by Gabriel in \cite{Gab}, we find that the two sets of numbers do not agree. In fact the orbit dimensions that Gabriel gives are exactly one less than the orbit dimensions we calculate in each case. This is strange. Since Gabriel did not give the proof of these facts in \cite{Gab} it is difficult to find an explanation for this difference. However in Mazzola's paper \cite{Maz} on classifying algebras of dimension five, the orbit dimensions are calculated by subtracting the dimension of the automorphism groups from 25 (25 being the dimension of $\GL_5$) --- this would tend to suggest that our methodology for calculating orbit dimensions is correct.
\end{remark}

\section{Degenerations in $\Salg_n$} \selabel{3.4}

In this section we concern ourselves with conditions determining when a degeneration of superalgebras in $\Salg_n$ can or cannot exist. When looking for conditions for the non-existence of degenerations between a given pair of superalgebras, it would be helpful to have some invariants of the superalgebra which are ``rigid" in the sense that if there is a degeneration of superalgebras from $A$ to $B$, then the superalgebras $A$ and $B$ must have the same value for the invariant. Unfortunately, the only such invariant that we know of is $\dim_0$, the dimension of the trivial degree part. The next best thing is a property of a superalgebra which any degeneration of this superalgebra must inherit, or some property which cannot increase or decrease upon degeneration. Such properties are analogous to those described in \cite[Proposition 2.7]{Gab}, which states, for example, the fact that the dimension of the radical cannot decrease upon degeneration. Later in the section we determine several properties from which any degeneration of a given superalgebra must share.

\begin{lemma}\lelabel{4.1}
Let $\Om: k \rightarrow \Salg_n$ be a polynomial function and $U \subseteq \Salg_n$. If there are infinitely many points of $\Om(k)$ in $U$ then $\Om(k) \subseteq \overline{U}$.
\end{lemma}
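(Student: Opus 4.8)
The plan is to exploit the fact that $\Om:k\rightarrow\Salg_n$ is a polynomial (hence a morphism of varieties), and that the Zariski closure $\overline{U}$ is the smallest closed set containing $U$. The key observation is that the image $\Om(k)$ is an irreducible subset of $\Salg_n$, being the continuous image of the irreducible variety $\A^1=k$; therefore its closure $\overline{\Om(k)}$ is an irreducible closed set of dimension at most $1$. The strategy is to show that the hypothesis ``infinitely many points of $\Om(k)$ lie in $U$'' forces $\overline{\Om(k)}$ to be contained in $\overline{U}$, and then to conclude $\Om(k)\subseteq\overline{\Om(k)}\subseteq\overline{U}$.

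First I would consider the preimage $\Om^{-1}(\overline{U})\subseteq k$. Since $\Om$ is a morphism and $\overline{U}$ is closed, $\Om^{-1}(\overline{U})$ is a closed subset of $\A^1$. By hypothesis there are infinitely many points of $\Om(k)$ in $U\subseteq\overline{U}$, so there are infinitely many parameter values $t\in k$ with $\Om(t)\in\overline{U}$, i.e. $\Om^{-1}(\overline{U})$ is infinite. The crucial step is then the dimension-theoretic dichotomy for closed subsets of the affine line: a closed subset of $\A^1$ is either all of $\A^1$ or a finite set of points (this is because $k[X]$ is a principal ideal domain, so any proper nonzero ideal is generated by a single nonzero polynomial, whose vanishing locus is finite). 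Since $\Om^{-1}(\overline{U})$ is infinite, it cannot be a finite set of points, and hence it must equal all of $k$.

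From $\Om^{-1}(\overline{U})=k$ it follows immediately that $\Om(k)\subseteq\overline{U}$, which is exactly the desired conclusion. I expect the only genuine point requiring care is the justification that $\Om$ is a morphism of varieties in the appropriate sense, so that preimages of closed sets are closed; since $\Om$ is given by polynomial functions into $\A^{n^3+n^2}\supseteq\Salg_n$, each coordinate of $\Om$ is a polynomial in the single parameter, and the preimage of a set cut out by polynomial equations is cut out by the corresponding polynomial compositions, hence closed. This is routine but is the place where the ``polynomial function'' hypothesis is actually used. The main conceptual ingredient, and the only step that is not pure formality, is the irreducibility/finiteness dichotomy on $\A^1$ that converts ``infinitely many points in $\overline{U}$'' into ``every point in $\overline{U}$''; everything else is bookkeeping with the definition of Zariski closure and continuity of morphisms.
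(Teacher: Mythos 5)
Your proof is correct and rests on the same key fact as the paper's: that the vanishing locus of a nonzero univariate polynomial is finite, so ``infinitely many'' forces ``identically''. The paper phrases this by taking an arbitrary polynomial $f$ vanishing on $U$ and observing that $f(a^k_{ij}(t),g^j_i(t))$ is a univariate polynomial with infinitely many roots, hence zero; your version, passing to the closed infinite subset $\Om^{-1}(\overline{U})\subseteq\A^1$, is just a cleaner topological repackaging of the identical argument.
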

\begin{proof}
First, note that we think of $\Om$ as describing a curve in $\Salg_n$. $\overline{U}$ is defined to be the intersection of all closed sets containing $U$. A closed set is the vanishing set of polynomials (intersected with $\Salg_n$), so it is enough to show that any polynomial vanishing on $U$ must also vanish on all of  $\Om(k)$.
By applying the appropriate projections to $\Om$, we may write $\a^k_{ij}=a^k_{ij}(t)$ and $\g^j_i=g^j_i(t)$ (letting the indeterminate be $t$), to describe the coordinates of this curve.

It is standard that $\Om^{-1}(U)=\{t \in k: \Om(t) \in U\}$, but notice that this set gives the $t$ values such that the curve $\Om$ lies inside the set $U$. We consider a polynomial function in $(\a^k_{ij},\g^j_i)$, which vanishes on $U$, $f(\a^k_{ij},\g^j_i)=0$. Since $f$ vanishes on $U$ it must vanish at the points of $\Om(k)$ lying inside $U$. So we have $t \in \Om^{-1}(U) \Rightarrow f(a^k_{ij}(t),g^j_i(t))=0$. Note that $f(a^k_{ij}(t),g^j_i(t))$ is a polynomial in $t$. Suppose the degree $\deg(f(a^k_{ij}(t),g^j_i(t)))=d$.

If $d \geq 1$,  then $f(a^k_{ij}(t),g^j_i(t))=0$ has at most $d$ zeros, which contradicts the fact that we assumed to vanish on all of $\Om(k) \cap U$, which has infinitely many points.
Thus $d = 0$, hence $f(a^k_{ij}(t),g^j_i(t))$ must be a constant. The only way that
$f(a^k_{ij}(t),g^j_i(t))=0$ is satisfied for points in $\Om^{-1}(U)$ is if $f(a^k_{ij}(t),g^j_i(t))$ is the zero polynomial, in which case $f(a^k_{ij}(t),g^j_i(t))=0$ is satisfied for all $t \in k$. This completes the proof.
\end{proof}

Now we consider a practical method for computing degeneration of superalgebras,  called a specialization of superalgebras. This method was first introduced by Gabriel in \cite{Gab}. We formulate it in the form of superalgebras.

\begin{definition}\delabel{4.2} \index{specialization!of superalgebras}
If $A$ and $B$ are $n$-dimensional superalgebras, a  {\bf specialization} of $A$ to $B$ is the following situation: one makes a change of basis in $A$ to a ``variable" basis, i.e. one involving some unknown $t$, such that the point of $\Salg_n$ obtained by structural transport is given by some polynomial functions in $t$ and lies in the orbit of $A$ for $t \neq 0$, yet at $t=0$ lies in the orbit $B$. We think of $B$ as being obtained by a formal limit of the basis change in $A$.
\end{definition}

A specialization of superalgebras $A$ to $B$ is a more restrictive notion than a specialization of algebras, since not only must there be a specialization of the underlying algebras, but also must this occur in such a way that under the specialization. The $\mathbb{Z}_2$-grading on $A$ also tends to the $\mathbb{Z}_2$-grading on $B$. This is usually a non-trivial constraint. So some specializations between algebras may not give rise to specializations of superalgebras on these algebras. Or perhaps one must use different specializations for different superalgebra structures on the same underlying algebra.

With this idea of specialization we obtain a useful corollary of the above lemma.

\begin{corollary}\colabel{4.3}
A specialization of $A$ to $B$ implies that $A$ degenerates to $B$.
\end{corollary}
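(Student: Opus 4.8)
The plan is to show that a specialization furnishes exactly the hypotheses needed to apply \leref{4.1}. Suppose we have a specialization of $A$ to $B$. By \deref{4.2} this gives a change of basis depending on a parameter $t$, and transporting structure through this basis change produces a point of $\Salg_n$ whose coordinates $(\a^k_{ij}(t),\g^j_i(t))$ are polynomial functions of $t$. In other words, the specialization is precisely a polynomial function $\Om:k\rightarrow\Salg_n$, $t\mapsto(\a^k_{ij}(t),\g^j_i(t))$, exactly of the type to which \leref{4.1} applies. First I would make this identification explicit: set $U=G_n\cdot A$, the orbit of $A$.

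Next I would verify the hypothesis of \leref{4.1}, namely that infinitely many points of $\Om(k)$ lie in $U=G_n\cdot A$. This is immediate from the definition of specialization: for every $t\neq 0$ the point $\Om(t)$ lies in the orbit of $A$. Since $k$ is algebraically closed it is infinite, so $k\bs\{0\}$ is infinite, and hence $\Om(k)$ meets $U$ in infinitely many points. \leref{4.1} then yields $\Om(k)\subseteq\overline{U}=\overline{G_n\cdot A}$.

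Finally I would extract the degeneration. The point $\Om(0)$ lies in $\Om(k)$, hence in $\overline{G_n\cdot A}$ by the previous step; but by \deref{4.2} the point $\Om(0)$ lies in the orbit $G_n\cdot B$. Thus $G_n\cdot B$ meets $\overline{G_n\cdot A}$, and since $\overline{G_n\cdot A}$ is $G_n$-stable (it is the closure of a $G_n$-orbit, and closures of invariant sets under a group action are invariant), it follows that the whole orbit $G_n\cdot B\subseteq\overline{G_n\cdot A}$. By \deref{3.8} and the remark immediately following it, this is exactly the statement that $A$ degenerates to $B$, i.e. $[A]\rightarrow[B]$.

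There is essentially no hard step here, since all the real content was isolated in \leref{4.1}; the corollary is a matter of recognizing that a specialization is a polynomial curve meeting the source orbit in all but one parameter value. The only point requiring a word of care is the passage from ``$\Om(0)\in\overline{G_n\cdot A}$'' to ``$G_n\cdot B\subseteq\overline{G_n\cdot A}$'', which uses $G_n$-invariance of the orbit closure; this is the same observation used throughout the paper to justify that degeneration depends only on orbits and not on chosen representatives.
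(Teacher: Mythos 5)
Your proof is correct and follows essentially the same route as the paper: take $\Om$ to be the polynomial curve given by the specialization, apply \leref{4.1} with $U=G_n\cdot A$ using that $k^*$ is infinite, and conclude from $\Om(0)\in G_n\cdot B$. The extra remark on $G_n$-stability of $\overline{G_n\cdot A}$ is harmless but not needed, since the paper's definition of degeneration only requires a single point of $G_n\cdot B$ to lie in $\overline{G_n\cdot A}$.
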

\begin{proof}
Clearly the specialization gives us a curve $\Om: k \rightarrow \Salg_n$. We let the set $U$ in \leref{4.1} be the orbit $G_n \cdot A$. Now, as $k$ is algebraically closed, it has infinitely many elements. Thus so does $k^*$. Then $\Om(k^*) \subseteq G_n \cdot A$, so $G_n \cdot A$ contains infinitely many elements of $\Om(k)$. Thus we may apply \leref{4.1}. Now note that $\Om(0)$ gives structure constants for a point in the orbit $G_n \cdot B$. Hence, by \leref{4.1} the point in the orbit $G_n \cdot B$ given by $\Om(0)$ lies in the closure of the orbit of $A$ --- this means that $A$ degenerates to $B$.
\end{proof}

\begin{remark}\relabel{4.4}
Let $A$ be a superalgebra with $\dim A_0=i$, in other words $A \in \Salg^i_n$. Suppose the bases of $A_0$ and $A_1$ are given by $\{1,e_2, \ldots,e_i\}$ and $\{e_{i+1}, \ldots, \linebreak e_n\}$ respectively. The specialization described by Gabriel in \cite{Gab} given by $1 \mapsto 1, e_2 \mapsto t e_2, \ldots, e_n \mapsto t e_n$ and letting $t \rightarrow 0$ implies that any algebra degenerates to the algebra $C_n$. This specialization does not alter the $\mathbb{Z}_2$-grading, which implies (by \coref{4.3}) any superalgebra in $\Salg^i_n$ degenerates to the superalgebra $C_n(i)$ in $\Salg^i_n$. Stated another way, the closure of any orbit in $\Salg^i_n$ contains the orbit of the superalgebra $C_n(i)$ in $\Salg^i_n$ (which is the closed orbit in $\Salg^i_n$).
\end{remark}

Earlier in \reref{2.11} we mentioned that $\Salg^i_n$ are the connected components of $\Salg_n$. Using \coref{4.3} above, we can now prove this to be the case.

We know that $\A^m$ is a Noetherian space and we have assumed that $\Salg^i_n$ is a closed subset of $\A^m$ (for $m=n^3+n^2$). Thus $\Salg^i_n$ is a union of a finite number of irreducible components. However, irreducible components are closed and they must all contain the orbit of the superalgebra $C_n(i)$ by the above remark. Hence the irreducible components have a non-empty intersection.
Thus $\Salg^i_n$ is a finite union of its irreducible components which are connected and have non-empty intersection. Thus we have showed the following.

\begin{proposition}\prlabel{4.5}
The set $\{\Salg^i_n\}^n_{i=1}$ are the connected components of $\Salg_n$.
\end{proposition}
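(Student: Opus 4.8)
The plan is to show that each $\Salg^i_n$ is connected and that the family $\{\Salg^i_n\}^n_{i=1}$ realizes $\Salg_n$ as a disjoint union of these closed pieces, so that they are precisely the connected components. The disjointness and the fact that they are closed are already available: the subsets $\Salg^i_n$ are pairwise disjoint (different values of $\dim A_0$), their union is all of $\Salg_n$, and we are operating under the standing assumption that each $\Salg^i_n$ is closed. Since $\A^m$ with $m=n^3+n^2$ is Noetherian and $\Salg^i_n$ is a closed subset of it, $\Salg^i_n$ decomposes into finitely many irreducible components, each of which is closed.

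First I would invoke \reref{4.4}: every orbit in $\Salg^i_n$ degenerates to the closed orbit $G_n \cdot C_n(i)$, equivalently, the closure of every orbit in $\Salg^i_n$ contains $G_n \cdot C_n(i)$. Because each irreducible component of $\Salg^i_n$ is closed and $G_n$-stable (the irreducible components are stable under the connected group $G_n$) and must contain at least one orbit, it therefore contains the orbit of $C_n(i)$. Hence every irreducible component of $\Salg^i_n$ contains the common point set $G_n \cdot C_n(i)$, so the pairwise intersections of the components are nonempty.

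Next I would use the standard topological fact that a finite union of connected sets with a common point (or, more than sufficiently, pairwise nonempty intersections sharing a fixed subset) is connected: since each irreducible component is irreducible and hence connected, and all of them meet in $G_n \cdot C_n(i)$, their union $\Salg^i_n$ is connected. Finally, to conclude that the $\Salg^i_n$ are exactly the connected components rather than merely connected closed subsets, I would note that they are pairwise disjoint, closed, and jointly cover $\Salg_n$; a connected subset of $\Salg_n$ must lie entirely within one of them (since a set meeting two of these disjoint closed-and-open pieces would be disconnected), and the $\Salg^i_n$ are themselves connected, so each is a maximal connected subset. This identifies $\{\Salg^i_n\}^n_{i=1}$ as the set of connected components.

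The main obstacle, and the only genuinely substantive step, is establishing that the irreducible components all share the common point $C_n(i)$; everything else is bookkeeping with disjointness, closedness, and the elementary topology of finite unions of connected sets meeting at a point. That key step rests entirely on \reref{4.4} together with the $G_n$-stability of irreducible components, so I would want to state explicitly that every component, being closed and a union of orbits, contains the closed orbit to which all orbits of $\Salg^i_n$ degenerate.
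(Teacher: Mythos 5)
Your proposal is correct and follows essentially the same route as the paper: decompose the closed set $\Salg^i_n$ into finitely many irreducible components (Noetherianity), observe via \reref{4.4} that each component contains the closed orbit $G_n \cdot C_n(i)$, and conclude connectedness from a finite union of irreducible (hence connected) sets with a common point, with disjointness and closedness of the $\Salg^i_n$ finishing the identification as connected components. Your added justification that each component contains the orbit of $C_n(i)$ (via $G_n$-stability and closedness of components) only makes explicit a step the paper leaves implicit.
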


Note that we needed to assume that $\{\Salg^i_n\}^n_{i=1}$ are closed subsets of $\Salg_n$ in order to prove \prref{4.5}. In fact
one can actually see that $\{\Salg^i_n\}^n_{i=1}$ are the connected components of $\Salg_n$ if and only if $\{\Salg^i_n\}^n_{i=1}$ are closed subsets. \prref{4.5} shows one of the directions, and for the converse we note that connected components are closed (a fact from General Topology).

Given $n$-dimensional superalgebras $A$ and $B$. To show that $A$ can not degenerate to $B$, it is sufficient to exhibit a closed set in $\Salg_n$ containing the orbit $G_n \cdot A$ which is disjoint from $G_n \cdot B$. Note that if there are two disjoint closed sets in $\Salg_n$ one containing the orbit $G_n \cdot A$ and the other containing the orbit of $G_n \cdot B$, then there cannot be any degenerations between $A$ and $B$. We now look for some necessary conditions  for a degeneration of superalgebras to exist.

\begin{remark}\relabel{4.7}
Suppose that $A$ and $B$ are $n$-dimensional superalgebras. The following conditions are necessary for a degeneration.
\begin{enumerate}[(a)]
\item If $A$ doesn't degenerate to $B$ as algebras, then $A$ cannot degenerate to $B$ as superalgebras. This condition becomes sufficient in case the $\mathbb{Z}_2$-gradings of the superalgebras are trivial.
\item There is no degeneration from $A$ to $B$ unless $\dim A_0 = \dim B_0$.
\item When $n \geq 3$, $\Salg^1_n$ consists only of the closed orbit of the superalgebra $C_n(1)$.  In this case, there is no degenerations in $\Salg^1_n$.
\end{enumerate}
\end{remark}

The above facts follow from considering either the algebra structure or the $\mathbb{Z}_2$-grading in isolation. For some more necessary conditions for the existence of a degeneration we must exploit both the algebra structure and the $\mathbb{Z}_2$-grading simultaneously.

Now we look for closed $G_n$-stable subsets defined by some superalgebraic properties. We need the notion of a {\it upper semicontinuous function}. One may find it, for example in \cite{CB}. Given two topological space $X$.
A function $f: X \rightarrow \mathbb{Z}$ is said to be upper semicontinuous if the set $\{x \in X: f(x) \geq n\}$ is closed in $X$ for all $n \in \mathbb{Z}$.

\begin{lemma}\lelabel{1.49} (\cite[Chapter $1$ \S8 Corollary $3$]{Mum})
If $f:X \rightarrow Y$ is a morphism of varieties, then the function $x \mapsto \dim_x f^{-1}(f(x))$ is upper semicontinuous.
\end{lemma}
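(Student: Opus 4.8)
The plan is to prove the local upper semicontinuity of fibre dimension by Noetherian induction on $\dim X$, after reducing to the case where $X$ and $Y$ are irreducible and $f$ is dominant. Write $X_n = \{x \in X : \dim_x f^{-1}(f(x)) \geq n\}$; the goal is to show each $X_n$ is closed. First I would reduce to $X$ irreducible: decomposing $X = \bigcup_i X_i$ into irreducible components, the fibre $f^{-1}(f(x))$ is the union of the $(f|_{X_i})^{-1}(f(x))$, so its local dimension at $x$ is the maximum of the local dimensions of those pieces passing through $x$. Hence $X_n = \bigcup_i (X_i)_n$ is a finite union, and it suffices to treat each component separately. Replacing $Y$ by the closure $\overline{f(X)}$ of the image, which is irreducible since $X$ is, I may then assume $f$ is dominant with $X$ and $Y$ both irreducible.

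Set $e = \dim X - \dim Y$. The classical theorem on dimensions of fibres of a dominant morphism (for instance the result immediately preceding the cited corollary in \cite{Mum}) supplies two facts: every nonempty component of every fibre has dimension at least $e$, and there is a dense open $V \subseteq Y$ over which every fibre has pure dimension $e$. The first fact shows $X_n = X$ for $n \leq e$, which is trivially closed. The second shows that $X_{e+1}$ is disjoint from $f^{-1}(V)$, hence $X_{e+1} \subseteq f^{-1}(Y \setminus V)$ is a proper subset of $X$.

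The crux is to show that $X_{e+1}$ is in fact closed; granting this, the induction finishes cleanly. Indeed, $X_{e+1}$ is then a proper closed subset, so each of its irreducible components has dimension strictly less than $\dim X$. Applying the inductive hypothesis to the restriction of $f$ to the components of $X_{e+1}$, and matching local fibre dimensions as in the first reduction, shows that $X_n = (X_{e+1})_n$ is closed for every $n \geq e+1$. To establish closedness of $X_{e+1}$ I would use the generic fibration structure: there is a dense open $U \subseteq X$ on which $f$ factors, up to a finite surjection, through the projection $Y \times \A^e \to Y$, so that every point of $U$ has local fibre dimension exactly $e$ and therefore $X_{e+1} \subseteq X \setminus U$. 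Running Noetherian induction on the proper closed set $X \setminus U$, again after decomposing into components and restricting $f$, yields that the jump locus is closed.

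The main obstacle is precisely the bookkeeping in this last step: when one restricts $f$ to a closed subset $Z \subseteq X$, the local fibre dimension $\dim_x (f|_Z)^{-1}(f(x))$ computed inside $Z$ need not equal $\dim_x f^{-1}(f(x))$ computed in $X$. One must arrange the decomposition so that the locus in $X$ where the fibre dimension is $\geq e+1$ is captured exactly by fibre dimensions $\geq 1$ relative to suitable closed strata, and verify that the two notions agree on those strata. Choosing $U$ to be the locus where $f$ is, locally, the projection above is what makes these dimensions compatible and drives the induction through; this is the technical heart of the argument and the reason the statement is invoked here as a known result from \cite{Mum} rather than reproved in detail.
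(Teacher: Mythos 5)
The paper offers no proof of this lemma at all: it is quoted verbatim from Mumford (Chapter 1, \S 8, Corollary 3) and used as a black box, so the only thing to measure your argument against is the standard proof --- which is indeed the one you are sketching: reduce to $X$, $Y$ irreducible and $f$ dominant, use the fibre--dimension theorem to dispose of $n\leq e$ and to confine the jump locus to a proper closed subset, then run Noetherian induction on dimension. The skeleton is correct.

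The one genuine gap is exactly where you flag it: you never prove that the local fibre dimensions of $f$ and of its restriction to the smaller closed set agree, and you end by deferring that step back to \cite{Mum}, which makes the proposal circular as a proof of the lemma. The gap is real but closes with a one-line observation, in either of two ways. (i) Discard your open set $U\subseteq X$ and induct instead on $W=f^{-1}(Y\setminus V)$, where $V\subseteq Y$ is the dense open you already introduced: $W$ is a \emph{union of fibres}, so $f^{-1}(f(x))=(f|_W)^{-1}(f(x))$ for every $x\in W$, the local dimensions coincide trivially, $X_n=W_n$ for all $n\geq e+1$, and $\dim W<\dim X$ lets the induction terminate. (ii) If you keep your $U$, note that any irreducible component $C$ of a fibre with $\dim C\geq e+1$ satisfies $\dim_y f^{-1}(f(y))\geq\dim C\geq e+1$ at every $y\in C$, hence $C\cap U=\emptyset$ and $C\subseteq X\setminus U$; consequently, for $x\in X\setminus U$ and $n\geq e+1$, $\dim_x f^{-1}(f(x))\geq n$ if and only if $\dim_x\bigl(f^{-1}(f(x))\cap(X\setminus U)\bigr)\geq n$, which is precisely the compatibility you needed. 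The same containment argument (a maximal-dimensional fibre component through $x\in X_{e+1}$ lies entirely inside $X_{e+1}$) is also what justifies your unproved assertion that $X_n=(X_{e+1})_n$; ``as in the first reduction'' does not cover it, since $X_{e+1}$ is not a union of irreducible components of anything in sight. With either patch the induction goes through and no appeal to \cite{Mum} is needed.
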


If $V$ is a vector space and $W$ a subset of $V$, then $W$ is called a {\it cone in $V$} if $W$ contains the zero vector and is closed under scalar multiplication.
The following lemma can be found in \cite{CB}.

\begin{lemma}\lelabel{1.51}
Suppose $X$ is a variety, $V$ a vector space and we are given subsets $V_x \subseteq V$ for all $x \in X$. Suppose that
\begin{enumerate}[(a)]
\item{each $V_x$ is a cone in $V$},
\item{$\{(x,v):v \in V_x\}$ is closed in $X\times V$}.
\end{enumerate}
Then the map $x \mapsto \dim V_x$ is upper semicontinuous.
\end{lemma}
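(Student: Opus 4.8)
The plan is to prove upper semicontinuity directly from the definition, by reducing the statement about dimensions of the fibres $V_x$ to an application of the fibre-dimension result of \leref{1.49} applied to a suitable morphism. First I would set $Z = \{(x,v) : v \in V_x\} \subseteq X \times V$, which by hypothesis (b) is closed, hence a (closed sub)variety of $X \times V$. Let $\pi: Z \rightarrow X$ be the restriction to $Z$ of the projection $X \times V \rightarrow X$; this is a morphism of varieties. The key observation is that for each $x \in X$ the scheme-theoretic fibre $\pi^{-1}(x)$ is exactly $\{x\} \times V_x$, which we may identify with the cone $V_x \subseteq V$. Thus understanding $\dim V_x$ amounts to understanding the dimensions of the fibres of $\pi$.

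The heart of the argument is the remark that, because each $V_x$ is a \emph{cone} (hypothesis (a)), its dimension as a variety is computed at the origin: a cone $C$ in a vector space $V$ satisfies $\dim C = \dim_0 C$, since the scaling action $v \mapsto tv$ for $t \neq 0$ carries any point of $C$ arbitrarily close to $0$ while staying in $C$, so the origin lies in the closure of every component and hence realizes the maximal local dimension. Consequently $\dim V_x = \dim_{(x,0)} \pi^{-1}(x)$, where $(x,0) \in Z$ is the canonical point lying over $x$ (it lies in $Z$ because $0 \in V_x$ by the cone condition). Writing $z_0(x) = (x,0)$, we therefore have the pointwise identity $\dim V_x = \dim_{z_0(x)} \pi^{-1}(\pi(z_0(x)))$.

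Now I would invoke \leref{1.49} applied to the morphism $\pi: Z \rightarrow X$: the function $z \mapsto \dim_z \pi^{-1}(\pi(z))$ is upper semicontinuous on $Z$. The map $s: X \rightarrow Z$, $x \mapsto (x,0)$, is a morphism (a section of $\pi$, being the restriction of $x \mapsto (x,0)$, which is a morphism $X \rightarrow X \times V$ landing in the closed set $Z$), hence continuous. The function $x \mapsto \dim V_x$ is then the composite of this continuous section $s$ with the upper semicontinuous function on $Z$ furnished by \leref{1.49}, and the composite of a continuous map with an upper semicontinuous function is again upper semicontinuous (the preimage under $s$ of the closed set $\{z : \dim_z \pi^{-1}(\pi(z)) \geq n\}$ is closed in $X$ for every $n$). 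This yields the desired conclusion.

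The main obstacle I anticipate is the cone step: one must argue carefully that for a cone the global dimension is attained locally at the origin, so that the \emph{local} fibre-dimension function from \leref{1.49} actually computes the \emph{global} quantity $\dim V_x$ appearing in the statement. The scaling argument sketched above handles this, but it relies on $V_x$ being closed under scalar multiplication and containing $0$ — precisely the two parts of hypothesis (a) — and on the general fact that for a variety the local dimension at a point is at most the global dimension, with equality at some point of every top-dimensional component. Once that identification is in place, the rest is a formal composition of continuous and upper semicontinuous maps.
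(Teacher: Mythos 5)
Your argument is correct and is precisely the intended one: the paper gives no proof of this lemma (it only cites \cite{CB}), but it states \leref{1.49} immediately beforehand exactly so that one can apply it to the projection $\pi\colon Z=\{(x,v):v\in V_x\}\rightarrow X$ and use the cone hypothesis to identify $\dim V_x$ with the local fibre dimension $\dim_{(x,0)}\pi^{-1}(x)$ along the zero section, as you do. Your treatment of the key point --- that every irreducible component of the closed cone $V_x$ contains the origin because the line $kv$ through any of its points lies in $V_x$, so the global dimension is attained locally at $0$ --- is the right justification, and the final composition with the continuous section $x\mapsto(x,0)$ is routine.
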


We have the following facts about dimension.

\begin{lemma}\lelabel{1.41}
\begin{enumerate}[(a)]
\item{For an algebraic set $X$, the dimension of $X$ is equal to the Krull dimension of its coordinate ring $A(X)$}.
\item{The dimension of $\A^n$ is $n$}.
\item{If $U \neq \emptyset$ is open in an irreducible variety $X$, then $\dim U=\dim X$}.
\item{If $X=\bigcup^n_{i=1} U_i$ with the $U_i$ irreducible, then $\dim X= \max_{i \in \{1, \ldots, n\}}\{\dim U_i\}$}.
\item{If $X \subseteq Y$ then $\dim X \leq \dim Y$, moreover if $X$ is closed and $Y$ is irreducible, then $X \subset Y$ implies $\dim X < \dim Y$}.
\end{enumerate}
\end{lemma}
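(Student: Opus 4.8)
The plan is to fix, as the definition, that the dimension of a space is the supremum of lengths $m$ of chains $Z_0 \subsetneq Z_1 \subsetneq \cdots \subsetneq Z_m$ of distinct nonempty irreducible closed subsets, and then to deduce all five assertions from this definition together with a single piece of commutative algebra. Everything here is standard (see, for example, \cite{Har}); I record only the strategy for each part. For part (a), I would invoke the Nullstellensatz: for an algebraic set $X$ it furnishes an inclusion-reversing bijection between the irreducible closed subsets of $X$ and the prime ideals of the coordinate ring $A(X)$. A chain of irreducible closed subsets of length $m$ corresponds to a chain of primes of length $m$, so the supremum defining $\dim X$ agrees with the supremum defining the Krull dimension of $A(X)$; hence $\dim X = \dim A(X)$.

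For part (b), by (a) it suffices to compute the Krull dimension of $k[x_1,\ldots,x_n] = A(\A^n)$. This is the one genuinely non-formal input, and it is the step I expect to be the main obstacle: for a finitely generated domain over a field the Krull dimension equals the transcendence degree of its fraction field, and $\mathrm{trdeg}_k\, k(x_1,\ldots,x_n) = n$. I would cite this from \cite{Har} (it rests on Noether normalisation and the going-up/going-down theorems). Once this transcendence-degree theorem is in hand, the remaining parts are purely formal manipulations of chains of irreducible closed subsets.

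For parts (c) and (e) I would argue topologically. Monotonicity in (e) follows because a chain of irreducible closed subsets of $X$ maps, via closure in $Y$, to a chain in $Y$ of the same length — distinctness survives because intersecting the closures back with $X$ recovers the original chain — so $\dim X \le \dim Y$. For the strict inequality, assuming moreover that $X$ is closed, $Y$ irreducible, and $X \subsetneq Y$, I take a chain in $X$ realising $\dim X$; its top $Z$ is closed in $Y$ and satisfies $Z \subseteq X \subsetneq Y$, so appending the irreducible closed set $Y$ yields a strictly longer chain, whence $\dim X < \dim Y$. For (c), the inequality $\dim U \le \dim X$ is monotonicity; for the reverse I would reduce to the affine case (covering $X$ by open affines, which suffices since the varieties of interest, such as $\Salg_n$, are affine) and use that a nonempty open $U$ contains a basic open $D(f)$ whose coordinate ring is the localisation $A(X)_f$. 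This localisation has the same fraction field as the domain $A(X)$, hence the same transcendence degree, so $\dim D(f) = \dim X$ by (a) and (b); then $D(f) \subseteq U \subseteq X$ together with monotonicity forces $\dim U = \dim X$.

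For (d), let $Z_0 \subsetneq \cdots \subsetneq Z_m$ be a chain in $X = \bigcup_{i=1}^n U_i$. Its top satisfies $Z_m = \bigcup_i (Z_m \cap \overline{U_i})$, a finite union of closed subsets; since $Z_m$ is irreducible it cannot be a union of proper closed subsets, so $Z_m \subseteq \overline{U_i}$ for some $i$, and then the whole chain lies in $\overline{U_i}$. Thus $\dim X \le \max_i \dim \overline{U_i}$, while $U_i \subseteq X$ gives $\dim U_i \le \dim X$ by monotonicity, so it remains to note $\dim U_i = \dim \overline{U_i}$. This holds for the locally closed subsets that occur in our applications (orbits are locally closed by \leref{3.6}(a)), since then $U_i$ is open in the irreducible set $\overline{U_i}$ and part (c) applies. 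Combining the two inequalities yields $\dim X = \max_i \dim U_i$.
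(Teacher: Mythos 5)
Your proposal is correct, but note that the paper itself offers no proof of this lemma at all: it is introduced with the words ``We have the following facts about dimension'' and the five statements are simply quoted as standard results (implicitly from references such as \cite{Har} and \cite{Mum}). So there is nothing in the paper to compare against; what you have written is the standard textbook derivation, and it is sound. The chain-of-irreducible-closed-subsets definition, the Nullstellensatz correspondence for (a), the transcendence-degree computation for (b), the localisation argument for (c), and the closure/appending arguments for (e) are all the expected ones. Two small points. First, in (e) you should observe that \emph{every} term of the chain realising $\dim X$, not just the top one, is closed in $Y$ (which holds because $X$ is closed in $Y$), so that the whole chain together with $Y$ appended is a chain of irreducible closed subsets of $Y$. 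Second, your proof of (d) genuinely needs $\dim U_i=\dim\overline{U_i}$, which you correctly derive from (c) only when $U_i$ is locally closed; for a completely arbitrary irreducible subset this equality is more delicate. You flag this restriction honestly, and it is harmless here since the sets to which the paper applies (d) and its companions --- orbits, which are locally closed by \leref{3.6}(a), and closed or linear subsets such as $A_1\ot A_1$ in \leref{4.8} --- all satisfy it. As written, your argument proves exactly what the paper uses.
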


\begin{lemma}\lelabel{4.8}
The following sets are closed in $\Salg_n$:
\begin{itemize}
\item[(a)] {$\{A \in \Salg_n:A_1^2=\{0\}\}$}.
\item[(b)] {$\{A \in \Salg_n:A_0$ is commutative $\}$}.
\end{itemize}
\end{lemma}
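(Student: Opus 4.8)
The plan is to exploit the fact that, since $\ch(k)\neq 2$, the involution $\s$ decomposes $A$ into its $\pm 1$ eigenspaces $A_0$ and $A_1$, and to produce explicit spanning sets of $A_1$ and $A_0$ whose coefficients depend \emph{polynomially} (indeed affine-linearly) on the coordinates $(\a^k_{ij},\g^j_i)$ of the point of $\Salg_n$. Once this is done, the conditions ``$A_1^2=\{0\}$'' and ``$A_0$ commutative'' translate into the vanishing of finitely many polynomials in these coordinates, exhibiting each set as $V(\cdots)\cap\Salg_n$, which is closed. The one conceptual subtlety is that $A_0$ and $A_1$ are not fixed coordinate subspaces: they move (and may even change dimension) as the point varies, so the real content is to encode them polynomially rather than choosing a fixed homogeneous basis.

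First I would treat (a) by setting $w_i=e_i-\s(e_i)$. Since $\s(e_i)=\sum_j\g^j_i e_j$, we have $w_i=\sum_j(\d^j_i-\g^j_i)e_j$, whose coefficients are affine-linear in the $\g^j_i$. As $\s(w_i)=-w_i$, each $w_i$ lies in $A_1$, and since every $a\in A_1$ satisfies $a=\frac{1}{2}(a-\s(a))$ the vectors $w_1,\dots,w_n$ span $A_1$. Because multiplication is bilinear, $A_1^2=\{0\}$ holds if and only if $w_iw_j=0$ for all $i,j$. Expanding, $w_iw_j=\sum_k F^k_{ij}\,e_k$ where $F^k_{ij}=\sum_{p,q}(\d^p_i-\g^p_i)(\d^q_j-\g^q_j)\a^k_{pq}$ is a polynomial in $(\a^k_{ij},\g^j_i)$. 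Hence $\{A\in\Salg_n:A_1^2=\{0\}\}=V(\{F^k_{ij}\})\cap\Salg_n$ is closed.

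For (b) I would argue symmetrically with $v_i=e_i+\s(e_i)=\sum_j(\d^j_i+\g^j_i)e_j$. These satisfy $\s(v_i)=v_i$, so they lie in $A_0$, and the identity $a=\frac{1}{2}(a+\s(a))$ for $a\in A_0$ shows they span $A_0$. Thus $A_0$ is commutative if and only if the commutators $v_iv_j-v_jv_i$ vanish for all $i,j$. Writing $v_iv_j-v_jv_i=\sum_k G^k_{ij}\,e_k$, each $G^k_{ij}=\sum_{p,q}\big[(\d^p_i+\g^p_i)(\d^q_j+\g^q_j)-(\d^p_j+\g^p_j)(\d^q_i+\g^q_i)\big]\a^k_{pq}$ is again polynomial in $(\a^k_{ij},\g^j_i)$, so the set in (b) equals $V(\{G^k_{ij}\})\cap\Salg_n$ and is closed.

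I expect the only step needing genuine care to be the spanning claims: verifying that $\{w_i\}$ (respectively $\{v_i\}$) spans $A_1$ (respectively $A_0$) uniformly across all points of $\Salg_n$. This is exactly where $\ch(k)\neq 2$ is used, since it guarantees that $A=A_0\oplus A_1$ is the eigenspace decomposition of $\s$ and that the half-difference and half-sum projections are defined. Once these spanning sets are available, the passage to polynomial conditions is routine bilinear algebra, and no further obstacle arises.
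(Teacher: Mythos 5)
Your proof is correct, but it takes a genuinely different and more elementary route than the paper's. You produce explicit spanning sets $w_i=e_i-\s(e_i)$ of $A_1$ and $v_i=e_i+\s(e_i)$ of $A_0$ whose coordinates are affine-linear in the $\g^j_i$ (the spanning claims are exactly where $\ch(k)\neq 2$ enters, via $a=\tfrac12(a\mp\s(a))$), and then bilinearity reduces ``$A_1^2=0$'' and ``$A_0$ commutative'' to the vanishing of the explicit polynomials $F^k_{ij}$ and $G^k_{ij}$, so each set is literally $V(\cdots)\cap\Salg_n$. The paper instead attaches to each $A$ the cone $W_A=\{v\ot w: v,w\in A_1,\ vw=0\}$ (resp.\ $W'_A$ for commutativity) in $V\ot V$, verifies the incidence set $\{(A,v\ot w):v\ot w\in W_A\}$ is closed, invokes the upper semicontinuity of $A\mapsto\dim W_A$ (\leref{1.51}), and characterizes the target set as $\{\dim W_A\geq (n-i)^2\}$ on each $\Salg^i_n$; this forces the proof to work component-by-component and to rely on the standing assumption that the $\Salg^i_n$ are closed. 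Your argument buys simplicity and generality: it needs no semicontinuity machinery, no case split over $i$, and no hypothesis on the $\Salg^i_n$ beyond $\ch(k)\neq 2$. What the paper's heavier method buys is reusability: the same cone-plus-semicontinuity template is what proves \leref{4.11}, where the conditions involve $\dim J(A_0)=1$ and are genuine rank/dimension thresholds not directly expressible as the vanishing of polynomials in the structure constants, so your direct translation would not carry over there.
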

\begin{proof}
Recall that we defined superalgebra structures on an $n$-dimensional vector space $V$ with a basis $\{e_1, \ldots, e_n\}$.

For the set in part (a) we assign to a superalgebra $A$ the following subset $W_A=\{v\ot w: v,w \in A_1,vw=0\}$ of $V \ot V$. For the set in part (b) we assign to a superalgebra $A$ the following subset $W'_A=\{v\ot w: v,w \in A_0,vw=wv\}$ of $V \ot V$. It is straightforward to check that these are both cones in $V \ot V$.

Then we may write $v=\sum_{i=1}^n c_ie_i$ and $w=\sum_{i=1}^n d_ie_i$. Now from $v \ot w \neq 0$ it is possible to recover $v$ and $w$ up to scalar multiple. This fact shall cause us no problems, however, since $W_A$ and $W'_A$ are cones in $V \ot V$.

We show now that $\{(A,v \ot w):v,w \in A_1, vw=0\}$ is closed in $\Salg_n \times (V \ot V)$. If $v \ot w =0$ then either $v=0$ or $w=0$, in which case $c_i=0$ for $i=1, \ldots, n$ or $d_i=0$ for $i=1, \ldots, n$. So for $v\ot w \neq 0$, $v \in A_1 \Leftrightarrow \sum_{i=1}^n c_i\g_i^j+c_j=0$ for $j=1, \ldots, n$; $w \in A_1 \Leftrightarrow \sum_{i=1}^n d_i\g_i^j+d_j=0$ for $j=1, \ldots, n$; and $vw=0 \Leftrightarrow \sum_{i,j=1}^n c_id_j\a_{ij}^k=0$ for $1\leq i,j \leq n$. We remark that if coordinates of $v$ and $w$ with respect to the given basis, i.e. $(c_i),(d_i)$, satisfy these equations, then so too must $(\l c_i), (\mu d_i)$ for any $\l, \mu \in k$. Thus it does not  matter that we can only obtain $v$ and $w$ up to scalar multiple. Thus $\{(A,v \ot w):v,w \in A_1, vw=0\}=V(\{c_i\}) \cup V(\{d_i\}) \cup V(\{\sum_{i=1}^n c_i\g_i^j+c_j,\sum_{i=1}^n d_i\g_i^j+d_j,\sum_{i,j=1}^n c_id_j\a_{ij}^k\})$, which is closed in $\Salg_n \times (V \ot V)$.

We show next that $\{(A,v \ot w):v,w \in A_0, vw=wv\}$ is closed in $\Salg_n \times (V \ot V)$. If $v \ot w =0$ then either $v=0$ or $w=0$, in which case $c_i=0$ for $i=1, \ldots, n$ or $d_i=0$ for $i=1, \ldots, n$. So for $v\ot w \neq 0$, $v \in A_0 \Leftrightarrow \sum_{i=1}^n c_i\g_i^j-c_j=0$ for $j=1, \ldots, n$; $w \in A_0 \Leftrightarrow \sum_{i=1}^n d_i\g_i^j-d_j=0$ for $j=1, \ldots, n$; and $vw=wv \Leftrightarrow \sum_{i,j=1}^n c_id_j(\a_{ij}^k-\a_{ji}^k)=0$ for $1\leq i,j \leq n$. Thus $\{(A,v \ot w):v,w \in A_0, vw=wv\}=V(\{c_i\}) \cup V(\{d_i\}) \cup V(\{\sum_{i=1}^n c_i\g_i^j-c_j,\sum_{i=1}^n d_i\g_i^j-d_j,\sum_{i,j=1}^n c_id_j(\a_{ij}^k-\a_{ji}^k)\})$, which is closed in $\Salg_n \times (V \ot V)$.

It follows from \leref{1.51} that the maps $A \mapsto \dim W_A$ and $A \mapsto \dim W'_A$ are upper semicontinuous.
Now since $\Salg_n^i$ are closed subsets of $\Salg_n$ it suffices to show that the sets mentioned in the lemma intersected with $\Salg_n^i$ are closed in $\Salg_n^i$ for each $i=1, \ldots, n$. That is, we may assume $\dim A_0=i$. We note that $W_A \subseteq A_1 \ot A_1$. Now if $A_1^2=0$, then $W_A=A_1 \ot A_1$ which has dimension $(n-i)^2$. If $A_1^2 \neq \{0\}$, then $W_A \subset A_1 \ot A_1$. We can see from the above, that for a given superalgebra $A$, $W_A$ is closed in $V \ot V$, and we note that $A_1 \ot A_1$ is irreducible and has dimension $(n-i)^2$ (as a variety) since it is isomorphic to the $(n-i)^2$-dimensional affine space $\A^{(n-i)^2}$. Thus $\dim W_A < (n-i)^2$ by \leref{1.41}. Therefore the set $\{A \in \Salg^i_n:A_1^2=\{0\}\}=\{A \in \Salg^i_n:\dim W_A \geq (n-i)^2\}$ is a closed set by the upper semicontinuity. This proves part (a).

Similarly $W'_A \subseteq A_0 \ot A_0$, and if $A_0$ is commutative then $W'_A =A_0 \ot A_0$ which has dimension $i^2$. If $A_0$ is not commutative then $W'_A \subset A_0 \ot A_0$ and so similarly as above $\dim W'_A < i^2$ (we just need to note that $W'_A$ is closed and $A_0 \ot A_0$ is irreducible). Thus the set $\{A \in \Salg^i_n:A_0$ is commutative $\}=\{A \in \Salg^i_n:\dim W'_A \geq i^2\}$  is a closed set by the upper semicontinuity. This proves part (b).
\end{proof}

For $\Salg^2_n$ we have other closed subsets. Since $\dim A_0 = 2$, $J(A_0)=\{x \in A_0: x^2=0\}$, notice that this is a vector subspace of $A_0$.

\begin{lemma}\lelabel{4.11}
The following are closed sets in $\Salg^2_n$:
\begin{itemize}
\item[(a)] {$\{A\in \Salg^2_n: \dim J(A_0)=1\}$}.
\item[(b)] {$\{A\in \Salg^2_n: \dim J(A_0)=1,J(A_0)A_1=\{0\}\}$}.
\item[(c)] {$\{A\in \Salg^2_n: \dim J(A_0)=1,A_1J(A_0)=\{0\}\}$}.
\end{itemize}
\end{lemma}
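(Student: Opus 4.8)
The plan is to realize all three sets as sublevel sets of upper semicontinuous dimension functions produced by \leref{1.51}, exactly in the spirit of \leref{4.8}. Write $V$ for the underlying $n$-dimensional space with basis $\{e_1,\dots,e_n\}$. On $\Salg^2_n$ the even part $A_0$ is spanned by $1$ and one further element, hence is commutative, so $J(A_0)$ is the nilradical of $A_0$ and in particular a genuine subspace. Since $1\in A_0$ while $1^2=1\neq 0$, we have $1\notin J(A_0)$, and therefore $\dim J(A_0)\in\{0,1\}$ at every point of $\Salg^2_n$. For part (a) I would apply \leref{1.51} with $V_A=J(A_0)=\{v\in A_0:v^2=0\}\subseteq V$. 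Each $V_A$ is a cone, and, writing $v=\sum_i c_ie_i$, the incidence set $\{(A,v):v\in J(A_0)\}$ is cut out in $\Salg^2_n\times V$ by the polynomials $\sum_i c_i\g^j_i-c_j$ (encoding $\s(v)=v$, i.e. $v\in A_0$) together with $\sum_{i,j}c_ic_j\a^k_{ij}$ (encoding $v^2=0$), hence is closed. Thus $A\mapsto\dim J(A_0)$ is upper semicontinuous, so $\{A:\dim J(A_0)\geq 1\}$ is closed, and as $\dim J(A_0)\leq 1$ always, this set equals $\{A:\dim J(A_0)=1\}$. This proves (a).

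For parts (b) and (c) I would mimic \leref{4.8} inside $V\ot V$. For (b) put $W_A=\{v\ot w:v\in J(A_0),\ w\in A_1,\ vw=0\}$, and for (c) put $W_A=\{v\ot w:v\in J(A_0),\ w\in A_1,\ wv=0\}$; the two arguments are identical up to reversing the order of the product, so I describe (b). As in \leref{4.8}, $W_A$ is a cone, and with $v=\sum_i c_ie_i$, $w=\sum_i d_ie_i$ the incidence set is the union of the locus $v\ot w=0$, namely $V(\{c_i\})\cup V(\{d_i\})$, with the vanishing locus of the scale-invariant polynomials $\sum_i c_i\g^j_i-c_j$ and $\sum_{i,j}c_ic_j\a^k_{ij}$ (giving $v\in J(A_0)$), $\sum_i d_i\g^j_i+d_j$ (giving $w\in A_1$), and $\sum_{i,j}c_id_j\a^k_{ij}$ (giving $vw=0$); this is closed in $\Salg^2_n\times(V\ot V)$, so by \leref{1.51} the function $A\mapsto\dim W_A$ is upper semicontinuous.

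The dimension count is where the hypothesis $\dim J(A_0)\leq 1$ does the real work. Since $W_A\subseteq J(A_0)\ot A_1$ and $J(A_0)$ is at most a line, $J(A_0)\ot A_1$ has dimension $\leq n-2$ and every one of its elements is a pure tensor $x_0\ot w$; hence $W_A=x_0\ot\{w\in A_1:x_0w=0\}$ is in fact a linear subspace. If $\dim J(A_0)=1$ and $J(A_0)A_1=\{0\}$ then $W_A=J(A_0)\ot A_1$ has dimension exactly $n-2$, whereas in every other case (either $\dim J(A_0)=0$, so $W_A=\{0\}$, or $\dim J(A_0)=1$ with $J(A_0)A_1\neq\{0\}$, so $W_A$ is a proper subspace) one gets $\dim W_A<n-2$ by \leref{1.41}. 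Consequently, for $n\geq 3$, $\{A:\dim W_A\geq n-2\}=\{A:\dim J(A_0)=1,\ J(A_0)A_1=\{0\}\}$, which is closed by upper semicontinuity; replacing $vw=0$ by $wv=0$ gives (c). For $n=2$ one has $A_1=\{0\}$ and the sets in (b) and (c) coincide with the set in (a), already handled.

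The main obstacle is the closedness of the incidence set in $V\ot V$: in general the fibrewise pure-tensor description need not be closed, because along a family in which $J(A_0)$ collapses from a line to $\{0\}$ a limit of decomposable tensors could escape $W_A$. The feature that removes this difficulty is precisely $\dim J(A_0)\leq 1$, which forces each $W_A$ to be an honest linear subspace of the pure tensors $x_0\ot A_1$, so no rank jump can occur and the explicit union-of-vanishing-loci description remains valid. If one wishes to sidestep this point entirely, one may instead run the \leref{1.51} argument over the closed subvariety $\{A:\dim J(A_0)=1\}$ furnished by part (a), on which $J(A_0)$ is a line throughout so that the collapse cannot arise, and then intersect the resulting closed set with this subvariety.
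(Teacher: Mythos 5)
Your proof is correct and follows essentially the same route as the paper's: the same incidence sets cut out by the same scale-invariant polynomials, upper semicontinuity of $A\mapsto\dim W_A$ via \leref{1.51}, and the identical dimension count $\dim W_A=n-2$ versus $\dim W_A<n-2$ using \leref{1.41}. Your added observation that $\dim J(A_0)\leq 1$ forces $W_A$ to be an honest linear subspace of pure tensors is a worthwhile clarification of a point the paper leaves implicit, but it does not change the argument.
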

\begin{proof}
We give the proof for subset (b), since the proof for subset (c) is very similar and the proof for subset (a) follows by just simplifying this proof.

For subset (b) we assign to a superalgebra $A$ the subset $W_A=\{v\ot w: v \in A_0, w \in A_1, v^2=0, vw=0\}$ of $V \ot V$. This is clearly a cone. We also note $W_A \subseteq J(A_0) \ot A_1$.
Suppose $v=\sum_{i=1}^n c_i e_i$, $w=\sum_{i=1}^n d_i e_i$. We discover $\{(A,v \ot w): v \ot w \in W_A\}=V(\{c_i\}) \cup V(\{d_i\}) \cup V(\{\sum_{i=1}^n c_i\g_i^j-c_j,\sum_{i=1}^nc_ic_j\a_{ij}^k,\sum_{i=1}^n d_i\g_i^j+d_j,\sum_{i,j=1}^n c_id_j\a_{ij}^k\})$. Which is closed in $\Salg_n \times (V \ot V)$.

So by \leref{1.51}, $A \mapsto \dim W_A$ is an upper semicontinuous map.
Now, if $A \in \{A\in \Salg^2_n: \dim J(A_0)=1,J(A_0)A_1=\{0\}\}$ then $\dim W_A=n-2$.
If $A \notin \{A\in \Salg^2_n: \dim J(A_0)=1,J(A_0)A_1=\{0\}\}$ then either $\dim J(A_0)=0$ in which case $W_A=\{0\}$ and $\dim W_A=0$ or $\dim J(A_0)=1$ and $J(A_0)A_1 \neq \{0\}$ in which case $W_A \subset J(A_0) \ot A_1$. In this case $\dim W_A < n-2$ since $W_A$ is closed, and $J(A_0)\ot A_1 \cong A_1 \cong \A^{n-2}$ as vector spaces, so $J(A_0) \ot A_1$ is an irreducible subset of dimension $n-2$ as an $(n-2)$-dimensional vector subspace $W$ of $\A^n$ with $n > r$ is isomorphic as a variety to $\A^r$. In particular this means that $W$ is irreducible and as a variety has dimension $r$.

Hence $\{A\in \Salg^2_n: \dim J(A_0)=1,J(A_0)A_1=\{0\}\}=\{A \in \Salg^2_n: \dim W_A \geq n-2\}$ which is closed by the upper semi-continuity.
\end{proof}

\bigskip

One can quickly check that if a superalgebra belongs to one of the closed sets described in \leref{4.8},  or \leref{4.11}, then any isomorphic superalgebra must also belong to the same set. Thus these closed sets are stable under the action of $G_n$.

\section{Degenerations in $\Salg_4$} \selabel{3.5}

In this section we are interested in determining when $4$-dimensional superalgebra structures do or do not degenerate to one another. Here we use the results derived in the previous section to help us.

The results of this section give us most of the degenerations in $\Salg_4$. Before giving the degeneration diagrams we shall first explain how to interpret them. We follow this by giving a partial classification theorem for $\Salg_4$ --- we determine twenty irreducible components. There are, however, two other structures which may or may not give rise to irreducible components, and finally we give the details of the degenerations or the non-existence of degenerations, which were shown in the degeneration diagram.

As we shall soon see, there can be no degenerations amongst $4$-dimensional superalgebras $A$ and $B$ with $\dim A_0 \neq \dim B_0$. Thus we can give the degeneration diagram for $\Salg_4$ by giving the degeneration diagrams for each of the connected components $\Salg^i_4$ for $i=1,2,3,4$ separately. However we shall omit the diagram for $\Salg^1_4$ since this consists of the solitary orbit of $(9|3)$.

Before giving these diagrams we shall explain the notations that we use in these diagrams.

We represent the orbits of isomorphism classes of superalgebras, by using the $(i|j)$ notation from \cite{ACZ}; $(i|j)$ shall be used to denote the orbit $G_4 \cdot (i|j)$ in $\Salg_4$.

The families of superalgebras $(18;\l|i)$, $i=0,1,2$ consist of those superalgebras for all values of $\l$ except $-1$, which in particular includes the values $\l=0$ and $\l=1$. In these cases these orbits coincide with some of the other orbits. This is because, as superalgebras, we have the following equalities or isomorphisms: $(18;0|0)=(16|0),(18;0|1)=(16|1),(18;0|2)=(16|3),(18;1|0) \cong (7|0),(18;1|1) \cong (7|2),(18;1|2) \cong (7|3)$.

In the degeneration diagram we use a dashed line to indicate a ``degeneration" by a family of superalgebra structures; that is, when an orbit lies in the closure of the union of a family of orbits. This explains the use of the dashed lines through the families $(18;\l|i)$, $i=0,1,2$. The fact that we use an arrow from $(18;\l|0)$ to $(8|0)$ and from $(18;\l|2)$ to $(8|3)$ is because there is a genuine degeneration from each of the orbits
in these families to the orbits $(8|0)$ or $(8|3)$.

The dotted arrows (or dotted lines in the case of degenerations by a family of structures), are used to indicate those degenerations which we are unsure of --- there may or may not be a degeneration between the indicated superalgebras.

\begin{figure}[htbp]
\begin{center}
\rotatebox{90}{\scalebox{1.0}{\input{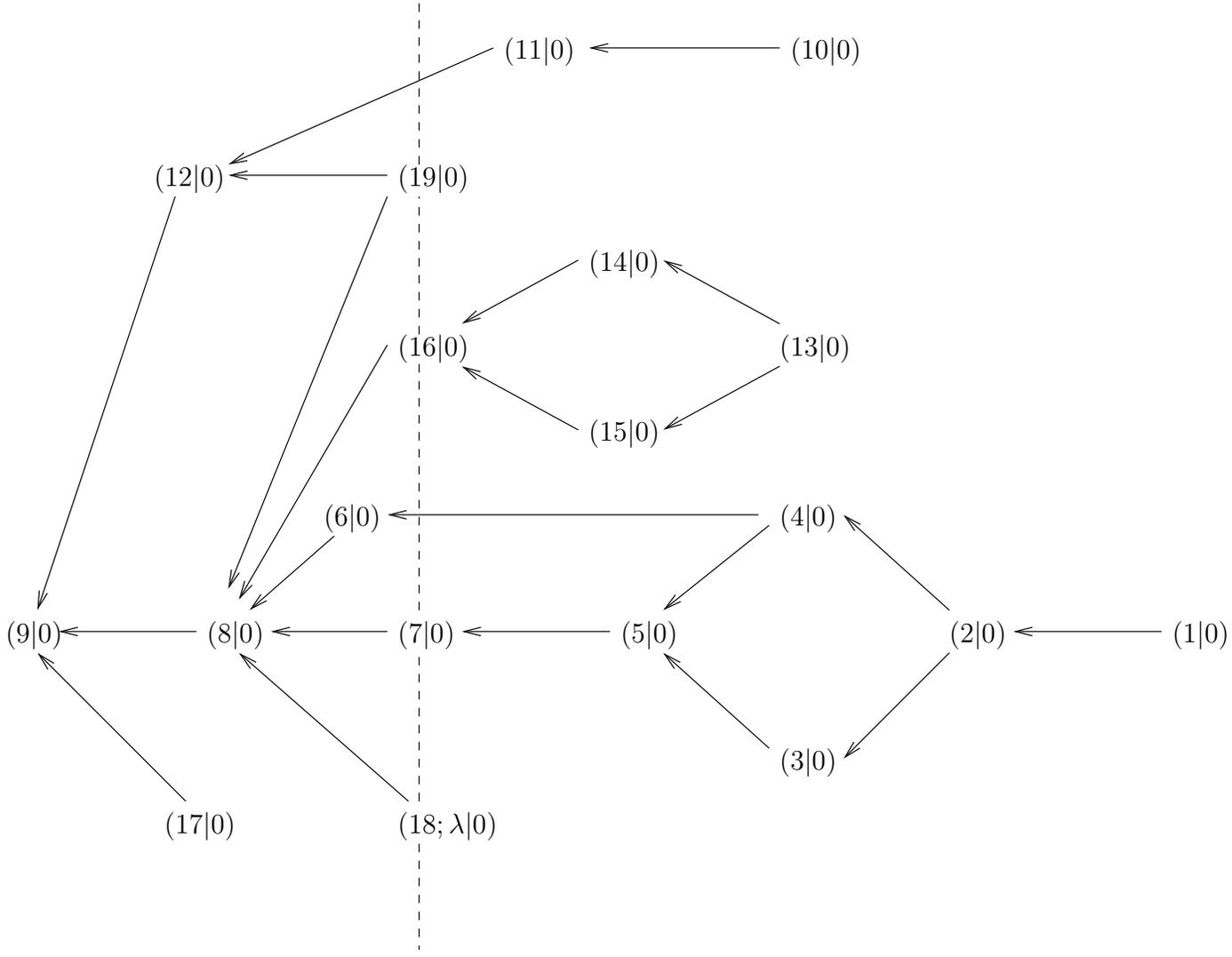}}}
\caption{Degenerations in the component $\Salg^4_4$}\label{dds44}
\end{center}
\end{figure}

\begin{figure}[htbp]
\begin{center}
\rotatebox{90}{\scalebox{0.5}{\input{degendiag43.pstex_t}}}
\caption{Degenerations in the component $\Salg^3_4$}\label{dds34}
\vspace{0.8cm}
\rotatebox{90}{\scalebox{0.5}{\input{degendiag42.pstex_t}}}
\caption{Degenerations in the component $\Salg^2_4$}\label{dds24}
\end{center}
\end{figure}

\newpage
From this we get the following (partial) result classifying $4$-dimensional superalgebras:

\begin{theorem}\thlabel{5.1} (Partial Geometric Classification of $4$-dimensional Superalgebras)\\
In $\Salg_4$ there are at least twenty irreducible components. The following structures (or families of structures) are known to be generic:

In $\Salg^4_4$: $(1|0),(10|0),(13|0),(17|0),(18;\l|0)$.

In $\Salg^3_4$: $(1|1),(11|1),(13|1),(14|1),(15|1),(17|1)$.

In $\Salg^2_4$: $(1|2),(10|1),(11|3),(14|3),(15|3),(17|2),(18;\l|1),(18;\l|2)$.

In $\Salg^1_4$: $(9|3)$.
\end{theorem}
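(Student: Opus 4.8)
The plan is to prove each of the twenty listed structures (or families) \emph{generic} in the sense of \coref{3.12}, i.e. to show that no orbit and no infinite family of orbits degenerates to it, so that its closure is an irreducible component. First I would reduce to the four connected components: by \prref{4.5} we have $\Salg_4=\Salg^1_4\sqcup\Salg^2_4\sqcup\Salg^3_4\sqcup\Salg^4_4$, and since irreducible components respect this decomposition it suffices to locate the maximal orbits in each $\Salg^i_4$; the counts $1+5+6+8=20$ then give the theorem. Here $\Salg^1_4$ is the single orbit of $(9|3)$ (see \reref{4.7}(c)), whose closure is the whole component; and $\Salg^4_4$ is identified with $\Alg_4$ through the isomorphism $I$ of \prref{2.8}, so its irreducible components are precisely the images of Gabriel's five components \cite{Gab}, namely $(1|0),(10|0),(13|0),(17|0)$ and the family $(18;\l|0)$.

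The second step disposes of every superalgebra whose underlying algebra is already generic in $\Alg_4$. Since $U$ is $G_4$-equivariant, continuous and closed (\prref{2.8}, \coref{2.9}), whenever no algebra degenerates to $U(A)$ no superalgebra can degenerate to $A$; this is exactly the mechanism of \reref{3.15}. As $(1),(10),(13),(17)$ and the family $(18;\lambda)$ are Gabriel's generic algebras, this at once gives the genericity of $(1|1),(13|1),(17|1)$ in $\Salg^3_4$ and of $(1|2),(10|1),(17|2)$ in $\Salg^2_4$; for the families $(18;\l|1)$ and $(18;\l|2)$ one combines this with their irreducibility (\reref{3.13}) and \coref{3.12}(c).

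The third, genuinely superalgebraic, step treats the structures whose underlying algebra is \emph{not} generic: $(11|1),(14|1),(15|1)$ (and $(17|1)$, already covered) in $\Salg^3_4$, and $(11|3),(14|3),(15|3)$ in $\Salg^2_4$. In $\Salg^3_4$ there is a clean argument. Reading off \prref{3.17} together with the classification, every member of $\Salg^3_4$ of orbit dimension $>9$ has commutative $A_0$, while $(11|1),(14|1),(15|1),(17|1)$ all have non-commutative $A_0$ and orbit dimension exactly $9$; since $\{A:A_0\text{ commutative}\}$ is closed and $G_4$-stable (\leref{4.8}(b)), a proper degeneration strictly lowers the orbit dimension (\leref{3.6}(c)), and $\Salg^3_4$ contains no infinite family, no superalgebra can degenerate to these four, so they are generic. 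In $\Salg^2_4$ this idea collapses because $\dim A_0=2$ forces $A_0$ commutative; instead I would rule out the candidate dominators (those of orbit dimension $\geq 11$, namely $(1|2),(2|3),(3|3),(10|1)$) individually. The commutative ones $(1|2),(2|3),(3|3)$ are excluded by \reref{4.7}(a), since a commutative algebra cannot degenerate to the non-commutative $(11),(14),(15)$, and the remaining non-commutative dominator $(10|1)$ must be separated from the targets using the finer closed $G_4$-stable sets of \leref{4.8}(a) and \leref{4.11} (the conditions $A_1^2=0$, $\dim J(A_0)=1$, $J(A_0)A_1=0$, $A_1J(A_0)=0$) together with the orbit dimensions of \prref{3.17}.

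I expect this last point to be the main obstacle. Unlike in $\Salg^3_4$, the only rigid invariant $\dim A_0$ gives no leverage here, and the underlying-algebra degenerations genuinely occur: $M_2$ does degenerate to $(11)$, $(14)$ and $(15)$ in $\Alg_4$ (by a specialization rescaling one off-diagonal generator), so $U$ cannot exclude $(10|1)$. Non-degeneration must therefore be forced at the level of the $\mathbb{Z}_2$-grading, showing that even though the algebra specializes the involution cannot specialize to the required grading — for instance that the exchange-type involution of $(11|3)$, for which $A_0$ is local, cannot arise as a limit of the idempotent-fixing involution of $(10|1)$. Carrying out this separation for each candidate dominator through the closed sets above is exactly the content encoded in the degeneration diagrams of Figures~\ref{dds34} and~\ref{dds24}; once those diagrams are justified, the theorem follows by reading off the maximal orbits in each component and checking they are the twenty listed structures. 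The phrase \emph{at least} reflects that two further structures, marked by dotted arrows in the diagrams, have undetermined status, so that $\Salg_4$ may in fact possess up to twenty-two irreducible components.
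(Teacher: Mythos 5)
Your overall architecture is the same as the paper's: the theorem is read off from the degeneration diagrams via \coref{3.12}, after splitting $\Salg_4$ into the connected components $\Salg^i_4$ (\prref{4.5}), handling $\Salg^1_4$ and $\Salg^4_4$ directly, disposing of every structure with generic underlying algebra by the $G_4$-equivariance of $U$ (\reref{3.15}, \reref{4.7}(a)), and attacking the rest with orbit dimensions and the closed $G_4$-stable sets of \leref{4.8} and \leref{4.11}. Your commutativity argument in $\Salg^3_4$ (every orbit there of dimension $>9$ has $A_0$ commutative, while $(11|1),(14|1),(15|1),(17|1)$ do not) is a clean repackaging of what the paper does case by case, and it checks out against \prref{3.17} and the classification.

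The genuine problems are in your $\Salg^2_4$ step. First, the claim that $M_2$ degenerates to $(14)$ and $(15)$ in $\Alg_4$ is false: by \prref{3.17} the orbits $(10|0)$, $(14|0)$ and $(15|0)$ all have dimension $9$ under $G_4$, so \leref{3.6}(c) forbids a proper degeneration; hence $(10|1)\nrightarrow(14|3)$ and $(10|1)\nrightarrow(15|3)$ already follow from \reref{4.7}(a), and the ``finer separation'' you anticipate there is not needed. Second, and more seriously, for the one case where a superalgebra-level argument really is required, namely excluding $(10|1)\rightarrow(11|3)$ (the underlying algebras do satisfy $M_2\rightarrow(11)$), the tools you propose cannot work: the sets of \leref{4.8}(a) and \leref{4.11} contain $(11|3)$ (which has $A_1^2=0$... no, rather $\dim J(A_0)=1$) but do \emph{not} contain $(10|1)$ (for which $A_1^2\neq 0$ and $J(A_0)=0$), whereas ruling out a degeneration requires a closed $G_4$-stable set containing the \emph{source} and missing the \emph{target}; upper semicontinuity runs in the wrong direction for you here. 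The paper instead disposes of this pair by an orbit-dimension argument (its list records $(10|1)\nrightarrow(11|3)$ as (OD)), not by the closed sets. Finally, your concluding sentence defers the actual verification to ``once those diagrams are justified,'' but justifying the diagrams \emph{is} the proof of the theorem, so as written the argument is not closed; you would need to either supply a working separation for $(10|1)$ versus $(11|3)$ or adopt the paper's dimension count for that orbit.
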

\begin{proof}
This follows from the degeneration diagrams and \coref{3.12} which gives the relationship between the degeneration partial order and the irreducible components.
\end{proof}

\begin{remark}\relabel{5.2}
The result above guarantees the existence of twenty irreducible components, however there could be up to two more irreducible components as well. It is the connected component $\Salg^2_4$ in which we are unsure if we have found all of the irreducible components. It is not known whether the following two structures in $\Salg^2_4$ are generic or not: $(6|2),(19|1)$ --- so $\Salg^2_4$ could have as few as eight irreducible components or as many as ten.
\end{remark}

We are unsure if $(18;\l|2)$ degenerates to $(19|1)$ or not. This is why the dashed line through $(18;\l|2)$ changes to a dotted line after passing through $(16|3)$. We point this out to the reader to ensure that this important detail is not missed.

\begin{remark}\relabel{5.3}
\prref{3.17} gives the dimensions of these orbits, which for the generic structures gives the dimensions of the components too. However, for the generic families $(18;\l|i)$ for $i=0,1,2$, the dimension of the component must be at least one larger than the dimension of any single orbit in this family. Since the family depends on one parameter $\l$, we would suspect that the dimensions of these components of the generic families are exactly one larger than the dimension of any single orbit in this family. However, we have not proved this. To prove that this is indeed the case, it would suffice to show that there can be no closed irreducible set $Y$ lying properly between $\overline{G_n \cdot (18;\l|i)}$ and $\overline{\bigcup_{\l} G_n \cdot (18;\l|i)}$, i.e. that it is impossible to have $\overline{G_n \cdot (18;\l|i)} \subset Y \subset \overline{\bigcup_{\l} G_n \cdot (18;\l|i)}$ when $Y$ is closed and irreducible.
\end{remark}

We now provide the details which were used to obtain the degeneration diagrams just given:

We apply the following useful facts mentioned in \reref{4.7} in the previous section which shall help us here. Since $n=4$ we may appeal \leref{2.10} to see that $\Salg^i_4$ for $i=1,2,3,4$ are all closed disjoint subsets (and in fact by \prref{4.5} are the connected components of $\Salg_4$). Thus by part (c) of \reref{4.7} there cannot be a degeneration from $A$ to $B$ unless $\dim_0 A = \dim_0 B$. Thus we need only to look at the degenerations amongst superalgebras belonging to the same subset $\Salg^i_4$.

Another remark made in part (a) of \reref{4.7} is the following: If $U(A)$ doesn't degenerate to $U(B)$ as algebras, then $A$ cannot degenerate to $B$ as superalgebras. So we simply focus on degenerations from $A$ to $B$, when there is a degeneration from $U(A)$ to $U(B)$ of underlying algebras. These two remarks represent large simplifications for us, as they greatly reduce the number of degenerations we must consider. Since two different superalgebras on the same underlying algebra have a trivial degeneration of the underlying algebra, we must however check to see if there are degenerations between different superalgebras on the same underlying algebra.

We also recall, any superalgebra in $\Salg^i_4$ degenerates to the superalgebra structure on $k[X,Y,Z]/(X,Y,Z)^2$ in $\Salg^i_4$ for $i=1,2,3,4$. The orbit of this superalgebra is the closed orbit in $\Salg^i_4$. We will not mention this degeneration further since it always exists. We gave the specialization giving rise to this degeneration in \reref{4.4}.

By \coref{4.3}, to show the existence of a degeneration, it suffices to exhibit a specialization. In this section to show the existence of degenerations we shall do this, except in one instance where we shall appeal to \leref{4.1} directly.

We mention that all the specializations given in this section are ``homogeneous", that is, the basis changes replace degree zero terms by degree zero terms, and similarly replace degree one terms by degree one terms. \coref{4.3} applies equally well to non-homogeneous specializations, however, such specializations are more difficult to determine. In fact, there are some superalgebras which we haven't determined whether there is or is not a degeneration between (e.g. does $(1|2)$ degenerate to $(6|2)$?), but if the degeneration was to be obtained by a specialization it would necessarily have to be non-homogeneous. For an example of a degeneration obtained by a non-homogeneous specialization we have the following in the dimension 2 case, where each superalgebra is given the non-trivial $\mathbb{Z}_2$-grading:

$k \times k \rightarrow k[X]/(X^2)$ by $e_1=(1,1),e_2=(1,-1),e'_1=e_1,e'_2=te_1+te_2$ let $t \rightarrow 0$

To show the non-existence of a degeneration we list the method which we use. There are several different methods. We give the name and a brief explanation for each below.

\begin{itemize}
\item{By \leref{3.6} part (c) the orbit dimension must strictly decrease upon proper degeneration. So a superalgebra cannot degenerate to another superalgebra of the same or greater dimension. We abbreviate this method by (OD). Note however that it is possible for a family of structures of a given dimension to ``degenerate" to a structure of the same dimension. As an example of this, each orbit in $(18;\l|0)$ has dimension $8$ as does the orbit $(19|0)$, yet the family $(18;\l|0)$ ``degenerates" to $(19|0)$.}
\item{For the other methods we use the closed $G_n$-stable subsets found in the previous section. If $A$ belongs to one of these subsets, and $B$ does not, then $A$ cannot degenerate to $B$. We shall refer to this set of methods by which of the closed $G_n$-stable subsets we apply. The abbreviation we give to the method by applying one of the closed sets is listed below.

\begin{itemize}
\item{(A) $\{A \in \Salg_n:A_1^2=\{0\}\}$}.
\item{(B) $\{A \in \Salg_n:A_0$ is commutative $\}$}.
\item{(C) $\{A\in \Salg^2_4: \dim J(A_0)=1\}$}.
\item{(D) $\{A\in \Salg^2_4: \dim J(A_0)=1,J(A_0)A_1=\{0\}\}$}.
\item{(E) $\{A\in \Salg^2_4: \dim J(A_0)=1,A_1J(A_0)=\{0\}\}$}.
\end{itemize}}
\end{itemize}

In the following, when $\a \neq 0$, we will use the shorthand, $\sqrt{\a}$ to denote {\em some element}, $x$, of $k^{*}$, such that $x^2=\a$. (Such an element $x$ always exists as $k$ is algebraically closed. Moreover, if $x$ is such an element, then so too is $-x$).

\bigskip

{\Large Case $\dim_0=4$:}

\medskip

Applying part (b) in \reref{4.7} from the previous section, we notice that the degeneration diagram of $\Salg^4_4$ corresponds exactly to the degeneration diagram of $\Alg_4$. These degenerations have been completely described by Gabriel in \cite{Gab}, where he gives the degeneration diagram.

\medskip

{\Large Case $\dim_0=3$:}

\medskip

Existence of Degenerations:

\smallskip

$(1|1)\rightarrow (2|1):$ $e_1=(1,1,1,1),e_2=(0,1,0,0),e_3=(0,0,1,1),e_4=(0,0,1,-1),e'_1=e_1,e'_2=e_2,e'_3=e_3,e'_4=te_4$ let $t \rightarrow 0$.

$(1|1)\rightarrow (2|2):$ $e_1=(1,1,1,1),e_2=(0,0,1,1),e_3=(1,-1,0,0),e_4=(0,0,1,-1),e'_1=e_1,e'_2=e_2,e'_3=te_3,e'_4=e_4$ let $t \rightarrow 0$.

$(1|1)\rightarrow (4|1):$ $e_1=(1,1,1,1),e_2=(1,0,0,0),e_3=(0,0,1,1),e_4=(0,0,1,-1),e'_1=e_1,e'_2=e_2,e'_3=t^2e_3,e'_4=te_4$ let $t \rightarrow 0$.

$(2|1)\rightarrow (3|1):$ $e_1=(1,1,1),e_2=(1,1,0),e_3=(1,-1,0),e_4=(0,0,X),e'_1=e_1,e'_2=e_2,e'_3=te_3,e'_4=e_4$ let $t \rightarrow 0$.

$(2|1)\rightarrow (6|1):$ $e_1=(1,1,1),e_2=(1,0,0),e_3=(0,-1,1),e_4=(0,0,X),e'_1=e_1,e'_2=e_2,e'_3=te_3,e'_4=e_4$ let $t \rightarrow 0$

$(2|2)\rightarrow (3|1):$ $e_1=(1,1,1),e_2=(1,1,0),e_3=(0,0,X),e_4=(1,-1,0),e'_1=e_1,e'_2=e_2,e'_3=e_3,e'_4=te_4$ let $t \rightarrow 0$.

$(2|2)\rightarrow (7|1):$ $e_1=(1,1,1),e_2=(1,1,0),e_3=(0,0,X),e_4=(1,-1,0),e'_1=e_1,e'_2=\sqrt{2}te_2+e_3,e'_3=t^2e_2,e'_4=\sqrt{-2}te_4$ let $t \rightarrow 0$.

$(3|1)\rightarrow (8|1):$ $e_1=(1,1),e_2=(1,0),e_3=(X,0),e_4=(0,Y),e'_1=e_1,e'_2=te_2+e_3,e'_3=te_3,e'_4=e_4$ let $t \rightarrow 0$.

$(4|1)\rightarrow (6|1):$ $e_1=(1,1),e_2=(1,0),e_3=(0,X^2),e_4=(0,X),e'_1=e_1,e'_2=e_2,e'_3=e_3,e'_4=te_4$ let $t \rightarrow 0$.

$(4|1)\rightarrow (7|1):$ $e_1=(1,1),e_2=(-1,1),e_3=(0,X^2),e_4=(0,X),e'_1=e_1,e'_2=t^2e_2+e_3,e'_3=t^2e_3,e'_4=\sqrt{-2}te_4$ let $t \rightarrow 0$.

$(6|1)\rightarrow (8|1):$ $e_1=(1,1),e_2=(-1,1),e_3=(0,X),e_4=(0,Y),e'_1=e_1,e'_2=te_2+e_3,e'_3=2te_3,e'_4=e_4$ let $t \rightarrow 0$.

$(7|1)\rightarrow (8|1):$ $e_1=1,e_2=X+Y,e_3=XY,e_4=X-Y,e'_1=e_1,e'_2=e_2,e'_3=2e_3,e'_4=te_4$ let $t \rightarrow 0$.

$(7|1)\rightarrow (8|2):$ $e_1=1,e_2=X+Y,e_3=XY,e_4=X-Y,e'_1=e_1,e'_2=-2e_3,e'_3=te_2,e'_4=e_4$ let $t \rightarrow 0$.

$(13|1)\rightarrow (14|2):$ $e_1=\left(1,{\scriptsize\begin{pmatrix} 1 & 0 \\ 0 & 1\\ \end{pmatrix}}\right),e_2=\left(1,{\scriptsize\begin{pmatrix} 0 & 0 \\ 0 & 1\\ \end{pmatrix}}\right),e_3=\left(1,{\scriptsize\begin{pmatrix} 0 & 0 \\ 0 & 0\\ \end{pmatrix}}\right),e_4=\left(0,{\scriptsize\begin{pmatrix} 0 & 1 \\ 0 & 0\\ \end{pmatrix}}\right),e'_1=e_1,e'_2=e_2,e'_3=te_3,e'_4=e_4$ let $t \rightarrow 0$.

$(13|1)\rightarrow (15|2):$ $e_1=\left(1,{\scriptsize\begin{pmatrix} 1 & 0 \\ 0 & 1\\ \end{pmatrix}}\right),e_2=\left(1,{\scriptsize\begin{pmatrix} 1 & 0 \\ 0 & 0\\ \end{pmatrix}}\right),e_3=\left(1,{\scriptsize\begin{pmatrix} 0 & 0 \\ 0 & 0\\ \end{pmatrix}}\right),e_4=\left(0,{\scriptsize\begin{pmatrix} 0 & 1 \\ 0 & 0\\ \end{pmatrix}}\right),e'_1=e_1,e'_2=e_2,e'_3=te_3,e'_4=e_4$ let $t \rightarrow 0$.

$(14|2)\rightarrow (8|1):$ $e_1={\scriptsize\begin{pmatrix} 1 & 0 & 0\\ 0 & 1 & 0\\ 0 & 0 & 1\\ \end{pmatrix}},e_2={\scriptsize\begin{pmatrix} 1 & 0 & 0\\ 0 & 1 & 0\\ 0 & 0 & -1\\ \end{pmatrix}},e_3={\scriptsize\begin{pmatrix} 0 & 0 & 0\\ 1 & 0 & 0\\ 0 & 0 & 0\\ \end{pmatrix}},e_4={\scriptsize\begin{pmatrix} 0 & 0 & 0\\ 0 & 0 & 0\\ 1 & 0 & 0\\ \end{pmatrix}} ,e'_1=e_1,e'_2=te_2+e_3,e'_3=2te_3,e'_4=e_4$ let $t \rightarrow 0$.

$(15|2)\rightarrow (8|1):$ $e_1={\scriptsize\begin{pmatrix} 1 & 0 & 0\\ 0 & 1 & 0\\ 0 & 0 & 1\\ \end{pmatrix}},e_2={\scriptsize\begin{pmatrix} 1 & 0 & 0\\ 0 & 1 & 0\\ 0 & 0 & -1\\ \end{pmatrix}},e_3= {\scriptsize\begin{pmatrix} 0 & 1 & 0\\ 0 & 0 & 0\\ 0 & 0 & 0\\ \end{pmatrix}},e_4={\scriptsize\begin{pmatrix} 0 & 0 & 1\\ 0 & 0 & 0\\ 0 & 0 & 0\\ \end{pmatrix}},e'_1=e_1,e'_2=te_2+e_3,e'_3=2te_3,e'_4=e_4$ let $t \rightarrow 0$.

\medskip

Non-existence of Degenerations:

\smallskip

$(2|1) \nrightarrow (2|2)$ (OD);

$(2|1) \nrightarrow (4|1)$ (OD);

$(2|1) \nrightarrow (7|1)$ (A);

$(2|1) \nrightarrow (8|2)$ (A);

$(2|2) \nrightarrow (2|1)$ (OD);

$(2|2) \nrightarrow (4|1)$ (OD);

$(3|1) \nrightarrow (7|1)$ (OD);

$(3|1) \nrightarrow (8|2)$ (A);

$(6|1) \nrightarrow (8|2)$ (A);

$(8|1) \nrightarrow (8|2)$ (OD);

$(8|2) \nrightarrow (8|1)$ (OD);

$(13|1) \nrightarrow (8|2)$ (A);

$(13|1) \nrightarrow (14|1)$ (B);

$(13|1) \nrightarrow (15|1)$ (B);

$(14|1) \nrightarrow (8|1)$ (OD);

$(14|1) \nrightarrow (8|2)$ (OD);

$(14|1) \nrightarrow (14|2)$ (OD);

$(14|2) \nrightarrow (8|2)$ (A);

$(14|2) \nrightarrow (14|1)$ (B);

$(15|1) \nrightarrow (8|1)$ (OD);

$(15|1) \nrightarrow (8|2)$ (OD);

$(15|1) \nrightarrow (15|2)$ (OD);

$(15|2) \nrightarrow (8|2)$ (A);

$(15|2) \nrightarrow (15|1)$ (B).

\medskip

Undetermined Degeneration:

\smallskip

$(2|2) \qmra (6|1)$.
\medskip

{\Large Case $\dim_0=2$:}

\medskip

Existence of Degenerations:

\smallskip

$(1|2)\rightarrow (2|3):$ $e_1=(1,1,1,1),e_2=(0,0,1,1),e_3=(1,-1,0,0),e_4=(0,0,1,-1),e'_1=e_1,e'_2=e_2,e'_3=e_3,e'_4=te_4$ let $t \rightarrow 0$.

$(1|2)\rightarrow (3|3):$ $e_1=(1,1,1,1),e_2=(1,1,0,0),e_3=(1,-1,1,-1),e_4=(1,-1,0,0),e'_1=e_1,e'_2=te_2,e'_3=e_3,e'_4=te_4$ let $t \rightarrow 0$.

$(2|3)\rightarrow (3|2):$ $e_1=(1,1,1),e_2=(1,1,0),e_3=(1,-1,0),e_4=(0,0,X),e'_1=e_1,e'_2=e_2,e'_3=te_3,e'_4=e_4$ let $t \rightarrow 0$.

$(2|3)\rightarrow (5|1):$ $e_1=(1,1,1),e_2=(1,1,0),e_3=(1,-1,0),e_4=(0,0,X),e'_1=e_1,e'_2=t^2e_2,e'_3=te_3+e_4,e'_4=t^3e_3$ let $t \rightarrow 0$.

$(3|2)\rightarrow (7|2):$ $e_1=(1,1),e_2=(1,-1),e_3=(X,Y),e_4=(X,-Y),e'_1=e_1,e'_2=te_2,e'_3=e_3,e'_4=te_4$ let $t \rightarrow 0$.

$(3|3)\rightarrow (5|1):$ $e_1=(1,1),e_2=(X,Y),e_3=(1,-1),e_4=(X,-Y),e'_1=e_1,e'_2=2te_2,e'_3=te_3+e_4,e'_4=2t^2e_4$ let $t \rightarrow 0$.

$(3|3)\rightarrow (7|3):$ $e_1=(1,1),e_2=(X,Y),e_3=(1,-1),e_4=(X,-Y),e'_1=e_1,e'_2=te_2,e'_3=te_3,e'_4=e_4$ let $t \rightarrow 0$.

$(5|1)\rightarrow (7|2):$ $e_1=1,e_2=X^2,e_3=X,e_4=X^3,e'_1=e_1,e'_2=e_2,e'_3=te_3,e'_4=te_4$ let $t \rightarrow 0$.

$(5|1)\rightarrow (8|3):$ $e_1=1,e_2=X^2,e_3=X,e_4=X^3,e'_1=e_1,e'_2=t^2e_2,e'_3=te_3,e'_4=e_4$ let $t \rightarrow 0$

$(7|3)\rightarrow (8|3):$ $e_1=1,e_2=XY,e_3=X+Y,e_4=X-Y,e'_1=e_1,e'_2=2e_2,e'_3=e_3,e'_4=te_4$ let $t \rightarrow 0$.

$(10|1)\rightarrow (11|2):$ $e_1={\scriptsize\begin{pmatrix} 1 & 0\\ 0 & 1\\ \end{pmatrix}},e_2={\scriptsize\begin{pmatrix} 1 & 0\\ 0 & -1\\ \end{pmatrix}},e_3={\scriptsize\begin{pmatrix} 0 & 1\\ 0 & 0\\ \end{pmatrix}},e_4={\scriptsize\begin{pmatrix} 0 & 0\\ 1 & 0\\ \end{pmatrix}},e'_1=e_1,e'_2=e_2,e'_3=te_3,e'_4=te_4$ let $t \rightarrow 0$.

$(10|1)\rightarrow (12|2):$ $e_1={\scriptsize\begin{pmatrix} 1 & 0\\ 0 & 1\\ \end{pmatrix}},e_2={\scriptsize\begin{pmatrix} 1 & 0\\ 0 & -1\\ \end{pmatrix}},e_3={\scriptsize\begin{pmatrix} 0 & 1\\ -1 & 0\\ \end{pmatrix}},e_4={\scriptsize\begin{pmatrix} 0 & 1\\ 1 & 0\\ \end{pmatrix}},e'_1=e_1,e'_2=t^2e_2,e'_3=te_3,e'_4=te_4$ let $t \rightarrow 0$.

$(11|2)\rightarrow (12|1):$ $e_1={\scriptsize\begin{pmatrix} 1 & 0 & 0 & 0\\ 0 & 1 & 0 & 0\\ 0 & 0 & 1 & 0\\ 0 & 0 & 0 & 1\\ \end{pmatrix}},e_2={\scriptsize\begin{pmatrix} 1 & 0 & 0 & 0\\ 0 & 1 & 0 & 0\\ 0 & 0 & -1 & 0\\ 0 & 0 & 0 & -1\\ \end{pmatrix}},e_3={\scriptsize\begin{pmatrix} 0 & 0 & 0 & 0\\ 0 & 0 & 0 & 1\\ 1 & 0 & 0 & 0\\ 0 & 0 & 0 & 0\\ \end{pmatrix}}, \linebreak e_4={\scriptsize\begin{pmatrix} 0 & 0 & 0 & 0\\ 0 & 0 & 0 & 1\\ -1 & 0 & 0 & 0\\ 0 & 0 & 0 & 0\\ \end{pmatrix}},e'_1=e_1,e'_2=te_2,e'_3=e_3,e'_4=te_4$ let $t \rightarrow 0$.

$(11|3)\rightarrow (12|1):$ $e_1={\scriptsize\begin{pmatrix} 1 & 0 & 0 & 0\\ 0 & 1 & 0 & 0\\ 0 & 0 & 1 & 0\\ 0 & 0 & 0 & 1\\ \end{pmatrix}},e_2={\scriptsize\begin{pmatrix} 0 & 0 & 0 & 0\\ 0 & 0 & 0 & 1\\ 1 & 0 & 0 & 0\\ 0 & 0 & 0 & 0\\ \end{pmatrix}},e_3={\scriptsize\begin{pmatrix} 1 & 0 & 0 & 0\\ 0 & 1 & 0 & 0\\ 0 & 0 & -1 & 0\\ 0 & 0 & 0 & -1\\ \end{pmatrix}}, \linebreak e_4={\scriptsize\begin{pmatrix} 0 & 0 & 0 & 0\\ 0 & 0 & 0 & -1\\ 1 & 0 & 0 & 0\\ 0 & 0 & 0 & 0\\ \end{pmatrix}},e'_1=e_1,e'_2=e_2,e'_3=te_3,e'_4=te_4$ let $t \rightarrow 0$.

$(11|3)\rightarrow (12|2):$ $e_1={\scriptsize\begin{pmatrix} 1 & 0 & 0 & 0\\ 0 & 1 & 0 & 0\\ 0 & 0 & 1 & 0\\ 0 & 0 & 0 & 1\\ \end{pmatrix}},e_2={\scriptsize\begin{pmatrix} 0 & 0 & 0 & 0\\ 0 & 0 & 0 & 1\\ 1 & 0 & 0 & 0\\ 0 & 0 & 0 & 0\\ \end{pmatrix}},e_3={\scriptsize\begin{pmatrix} 1 & 0 & 0 & 0\\ 0 & 1 & 0 & 0\\ 0 & 0 & -1 & 0\\ 0 & 0 & 0 & -1\\ \end{pmatrix}}, \linebreak e_4={\scriptsize\begin{pmatrix} 0 & 0 & 0 & 0\\ 0 & 0 & 0 & 1\\ -1 & 0 & 0 & 0\\ 0 & 0 & 0 & 0\\ \end{pmatrix}},e'_1=e_1,e'_2=te_2,e'_3=te_3,e'_4=e_4$ let $t \rightarrow 0$.

$(14|3)\rightarrow (16|1):$ $e_1={\scriptsize\begin{pmatrix} 1 & 0 & 0\\ 0 & 1 & 0\\ 0 & 0 & 1\\ \end{pmatrix}},e_2={\scriptsize\begin{pmatrix} 1 & 0 & 0\\ 0 & 1 & 0\\ 0 & 0 & -1\\ \end{pmatrix}},e_3={\scriptsize\begin{pmatrix} 0 & 0 & 0\\ 1 & 0 & 0\\ 1 & 0 & 0\\ \end{pmatrix}},e_4={\scriptsize\begin{pmatrix} 0 & 0 & 0\\ 1 & 0 & 0\\ -1 & 0 & 0\\ \end{pmatrix}},e'_1=e_1,e'_2=te_2,e'_3=e_3,e'_4=te_4$ let $t \rightarrow 0$.

$(15|3)\rightarrow (16|2):$ $e_1={\scriptsize\begin{pmatrix} 1 & 0 & 0\\ 0 & 1 & 0\\ 0 & 0 & 1\\ \end{pmatrix}},e_2={\scriptsize\begin{pmatrix} 1 & 0 & 0\\ 0 & 1 & 0\\ 0 & 0 & -1\\ \end{pmatrix}},e_3={\scriptsize\begin{pmatrix} 0 & 1 & 1\\ 0 & 0 & 0\\ 0 & 0 & 0\\ \end{pmatrix}},e_4={\scriptsize\begin{pmatrix} 0 & 1 & -1\\ 0 & 0 & 0\\ 0 & 0 & 0\\ \end{pmatrix}},e'_1=e_1,e'_2=te_2,e'_3=e_3,e'_4=te_4$ let $t \rightarrow 0$.

$(16|3)\rightarrow (8|3):$ $e_1=1,e_2=XY,e_3=X+Y,e_4=X-Y,e'_1=e_1,e'_2=e_2,e'_3=e_3,e'_4=te_4$ let $t \rightarrow 0$.

$(18;\l|1) \rightarrow (7|2),(16|1):$
Since the orbits of $(7|2)$ and $(16|1)$ coincide with the orbits of $(18;1|1)$ and $(18;0|1)$ respectively, $(7|2)$ and $(16|1)$ are included in the closure of the union of the family of orbits $(18;\l|1)$.

$(18;\l|1) \rightarrow (16|2):$
Also $(16|2)$ is included in the closure of the union of the family of orbits $(18;\l|1)$. To see this, we look at the structure constants of $(18;t^{-1}|1)$ in the basis $e_1=1,e_2=X,e_3=Y,e_4=YX$. This gives us a curve in $\Salg_4$ which lies in the family of orbits of $(18;\l|1)$ for $t \neq 0$, yet lies in the orbit of $(16|2)$ when $t=0$. By appealing to \leref{4.1} directly the result follows.

$(18;\l|2) \rightarrow (7|3),(16|3):$
Similarly the orbits of $(7|3)$ and $(16|3)$ are included in the closure of the union of the family of orbits $(18;\l|2)$.

$(18;\l|2)\rightarrow (8|3):$ $e_1=1,e_2=XY,e_3=X+Y,e_4=X-Y,e'_1=e_1,e'_2=(1+\l)e_2,e'_3=e_3,e'_4=te_4$ let $t \rightarrow 0$.

$(19|1)\rightarrow (8|3):$ $e_1=1,e_2=XY,e_3=X+Y,e_4=X-Y,e'_1=e_1,e'_2=e_2,e'_3=e_3,e'_4=te_4$ let $t \rightarrow 0$.

$(19|1)\rightarrow (12|2):$ $e_1=1,e_2=XY,e_3=X,e_4=Y,e'_1=e_1,e'_2=te_2,e'_3=te_3,e'_4=e_4$ let $t \rightarrow 0$.

\medskip

Non-existence of Degenerations:

\smallskip

$(2|3) \nrightarrow (3|3)$ (OD);

$(3|2) \nrightarrow (3|3)$ (OD);

$(3|2) \nrightarrow (5|1)$ (OD);

$(3|2) \nrightarrow (7|3)$ (OD);

$(3|2) \nrightarrow (8|3)$ (A);

$(3|3) \nrightarrow (3|2)$ (C);

$(5|1) \nrightarrow (7|3)$ (OD);

$(6|2) \nrightarrow (8|3)$ (OD);

$(7|2) \nrightarrow (7|3)$ (OD);

$(7|2) \nrightarrow (8|3)$ (OD);

$(7|3) \nrightarrow (7|2)$ (D);

$(10|1) \nrightarrow (11|3)$ (OD);

$(11|2) \nrightarrow (11|3)$ (OD);

$(11|2) \nrightarrow (12|2)$ (A);

$(11|3) \nrightarrow (11|2)$ (C);

$(12|1) \nrightarrow (12|2)$ (A);

$(12|2) \nrightarrow (12|1)$ (OD);

$(14|3) \nrightarrow (16|3)$ (OD);

$(14|3) \nrightarrow (8|3)$ (A);

$(15|3) \nrightarrow (16|3)$ (OD);

$(15|3) \nrightarrow (8|3)$ (A);

$(16|1) \nrightarrow (16|2)$ (OD);

$(16|1) \nrightarrow (16|3)$ (OD);

$(16|1) \nrightarrow (8|3)$ (OD);

$(16|2) \nrightarrow (16|1)$ (OD);

$(16|2) \nrightarrow (16|3)$ (OD);

$(16|2) \nrightarrow (8|3)$ (OD);

$(16|3) \nrightarrow (16|1)$ (D);

$(16|3) \nrightarrow (16|2)$ (E);

$(18;\l|1) \nrightarrow (7|3),(16|3),(18;\l|2),(19|1)$ (A);

$(18;\l|1) \nrightarrow (8|3)$ (A);

$(18;\l|2) \nrightarrow (7|2),(16|1),(16|2),(18;\l|1)$ (D), (E);

$(19|1) \nrightarrow (12|1)$ (D).

\medskip

Undetermined Degenerations:

\smallskip

$(1|2) \qmra (6|2)$;

$(2|3) \qmra (6|2)$;

$(18;\l|2) \qmra (19|1)$;

$(2|3) \qmra (7|3)$;

$(14|3) \qmra (16|2)$;

$(15|3) \qmra (16|1)$.

The first three of these undetermined degenerations are related to discovering whether $(6|2)$ or $(19|1)$ give rise to irreducible components in $\Salg^2_4$.

\begin{remark}\relabel{5.4}
We close with the remark that in $\Salg_4$ no two superalgebra structures $A$ and $B$ on the same underlying algebra can degenerate to each other, even if $\dim_0 A=\dim_0 B$. We have seen this from brute force checking of each case. Is it a general result that there can be no degeneration from a superalgebra to any other superalgebra having the same underlying algebra?
\end{remark}

\end{document}